\title[Local limit theorems in relatively hyperbolic groups I]{Local limit theorems in relatively hyperbolic groups I : rough estimates}
\author{Matthieu Dussaule}
\date{}
\newcommand{\vertiii}[1]{{\left\vert\kern-0.25ex\left\vert\kern-0.25ex\left\vert #1 
    \right\vert\kern-0.25ex\right\vert\kern-0.25ex\right\vert}}
\newcommand\Z{\mathbb{Z}}
\newcommand\R{\mathbb{R}}
\newcommand\C{\mathbb{C}}
\theoremstyle{plain}
\newtheorem{definition}{Definition}[section]
\newtheorem{proposition}[definition]{Proposition}
\newtheorem{corollary}[definition]{Corollary}
\newtheorem{theorem}[definition]{Theorem}
\newtheorem{lemma}[definition]{Lemma}
\newtheorem*{prop*}{Proposition}
\newtheorem*{lem*}{Lemma}
\theoremstyle{remark}
\newtheorem{remark}{Remark}[section]
\newtheorem*{rem*}{Remark}
\DeclareMathOperator{\Cay}{Cay}
\apptocmd{\sloppy}{\hbadness 10000\relax}{}{}
\apptocmd{\sloppy}{\vbadness 10000\relax}{}{}
\begin{document}

\begin{abstract}
    This is the first of a series of two papers dealing with local limit theorems in relatively hyperbolic groups.
    In this first paper, we prove rough estimates for the Green function.
    Along the way, we introduce the notion of relative automaticity which will be useful in both papers and we show that relatively hyperbolic groups are relatively automatic.
    We also define the notion of spectral positive-recurrence for random walks on relatively hyperbolic groups.
    We then use our estimates for the Green function to prove that $p_n\asymp R^{-n}n^{-3/2}$ for spectrally positive-recurrent random walks, where $p_n$ is the probability of going back to the origin at time $n$ and where $R$ is the spectral radius of the random walk.
\end{abstract}

\maketitle

\section{Introduction}

\subsection{Random walks and local limit theorems}
Consider a finitely generated group $\Gamma$ and a probability measure $\mu$ on $\Gamma$.
We define the $\mu$-random walk on $\Gamma$, starting at $\gamma \in \Gamma$, as
$X_n^{\gamma}=\gamma g_1...g_n$,
where $(g_k)$ are independent random variables of law $\mu$ in $\Gamma$.
We say that $\mu$ is admissible if its support generates $\Gamma$ as a semigroup.
Equivalently, one can reach any point from any point with the random walk with positive probability.
We will always assume in the following that measures $\mu$ are admissible.
We say that $\mu$ is symmetric if $\mu(\gamma)=\mu(\gamma^{-1})$,
which means for the random walk that the probability to go from $\gamma$ to $\gamma'$ is the same as the probability to go from $\gamma'$ to $\gamma$.
The law of $X_n^{\gamma}$ is denoted by $p_n(\gamma,\gamma')=p_n(e,\gamma^{-1}\gamma')$.
It is given by the convolution powers of $\mu$, that is, $p_n(e,\gamma)=\mu^{*n}(\gamma)$.

Say that the $\mu$-random walk is aperiodic if $p_n(e,e)>0$ for large enough $n$.
The local limit problem consists in finding asymptotics of $p_n(e,e)$ when $n$ goes to infinity.
In many situations, if the $\mu$-random walk is aperiodic, one can prove a local limit theorem of the form
\begin{equation}\label{locallimittheoremgeneralform}
p_n(e,e)\sim C R^{-n}n^{-\alpha},
\end{equation}
where $C>0$ is a constant, $R\geq 1$ and $\alpha \in \R$.
In such a case, $\alpha$ is called the critical exponent of the random walk.

For example, if $\Gamma=\Z^d$ and $\mu$ is finitely supported and aperiodic, then classical Fourier computations show that
$p_n(e,e)\sim Cn^{-d/2}$ if the random walk is centered and $p_n(e,e)\sim CR^{-n}n^{-d/2}$ with $R>1$ if the random walk is non-centered.
If $\Gamma$ is a non-elementary Gromov-hyperbolic group and $\mu$ is finitely supported, symmetric and aperiodic, then one has
$p_n(e,e)\sim CR^{-n}n^{-3/2}$, for $R>1$.
This was proved by P.~Gerl and W.~Woess \cite{GerlWoess} and S.~Lalley \cite{Lalley} for free groups, by S.~Lalley and S.~Gou\"ezel \cite{GouezelLalley} for cocompact Fuchsian groups and by S.~Gou\"ezel \cite{Gouezel1} for any hyperbolic group.

In \cite{Gerl}, P.~Gerl conjectured that if a local limit of the form~(\ref{locallimittheoremgeneralform}) holds for a finitely supported random walk, then $\alpha$ is a group invariant, that is, if two different finitely supported measures $\mu_1$ and $\mu_2$ lead to asymptotics like~(\ref{locallimittheoremgeneralform}), with $C_1,R_1,\alpha_1$ and $C_2,R_2,\alpha_2$ respectively, then $\alpha_1=\alpha_2$.
D.~Cartwright disproved in \cite{Cartwright2} this conjecture with a spectacular result, constructing different nearest neighbor random walks on a free product $\Z^d*\Z^d$ with different critical exponents, namely $3/2$ and $d/2$, where $d\geq 5$.
He had previously proved that one could have $\alpha=d/2$ for some free products of the form $\Z^d*\Z^d*...*\Z^d$ in \cite{Cartwright1},
whereas W.~Woess had proved in \cite{Woess2} that for nearest neighbor random walks on free products, in general, one had $\alpha=3/2$ (what W.~Woess calls "typical cases" in \cite{Woess}).

\medskip
In \cite{CandelleroGilch}, E.~Candellero and L.~Gilch gave a complete description of every local limit theorem that can occur for finitely supported adapted nearest neighbor random walks on free products of free abelian groups.
Precisely, let $\Gamma=\Z^{d_1}*\Z^{d_2}$ and let $\mu$ be a probability measure on $\Gamma$.
We say that $\mu$ is adapted if it is of the form
$t\mu_1+(1-t)\mu_2$, where $0< t< 1$ and where $\mu_i$ is a probability measure on $\Z^{d_i}$.
We assume that $\mu_i$ is finitely supported and admissible on $\Z^{d_i}$ so that $\mu$ also is finitely supported and admissible on $\Gamma$.
Then, depending on the weight $t$, one can have the critical exponent of $\mu$ to be $d_1/2$, $d_2/2$ or $3/2$.
Every case can occur when $d_1,d_2\geq 5$, see \cite[Section~7]{CandelleroGilch} for more details.

In all the examples cited above, except for the case of $\Z^d$ where explicit computations are made easily,
one shows a local limit theorem by studying the Green function
$$G(\gamma,\gamma')=\sum_{n\geq 0}\mu^{*n}(\gamma^{-1}\gamma')=\sum_{n\geq 0}p_n(\gamma,\gamma'),$$
which encodes the behavior of $p_n(e,e)$.
More generally, define
$$G(\gamma,\gamma'|r)=\sum_{n\geq 0}r^n\mu^{*n}(\gamma^{-1}\gamma').$$
Let $R_{\mu}$ be the radius of convergence of this power series.
We call $R_{\mu}$ the spectral radius of the random walk.
A good way to find asymptotics of $\mu^{*n}(e)$ is to look at singularities of the Green function at the spectral radius.
When the Green function is an algebraic function, this can be done by using Darboux-like theorems.
However, in general, we do not know if it is algebraic and one has to find another way to connect properties of $G(e,e|R_{\mu})$ with asymptotics of $\mu^{*n}(e)$.
In \cite{GouezelLalley} and \cite{Gouezel1}, this is done by using Tauberian theory and this will be our approach here.

\medskip
As we saw, free products provide a great source of examples for local limit theorems.
There are several equivalent ways of defining relatively hyperbolic groups (see Section~\ref{Sectionrelativelyhyperbolicgroups} for more details).
If $\Omega$ is a collection of subgroups of $\Gamma$, we say that $\Gamma$ is hyperbolic relative to $\Omega$ if it acts geometrically finitely on a proper geodesic hyperbolic space $X$ such that the stabilizers of the parabolic limit points are exactly the subgroups in $\Omega$.
The elements of $\Omega$ are called peripheral subgroups or (maximal) parabolic subgroups.
We fix a collection $\Omega_0$ of representatives of conjugacy classes of $\Omega$.
According to \cite[Proposition~6.15]{Bowditch}, such a collection is finite.
If $\Gamma$ is a free product of the form $\Gamma=\Gamma_1*...*\Gamma_n$, then $\Gamma$ is relatively hyperbolic with respect to conjugacy classes of the free factors $\Gamma_i$ and one can choose $\Omega_0=\{\Gamma_1,...,\Gamma_n\}$.

In this series of two papers, we extend W.~Woess' results \cite{Woess2} on free products to any relatively hyperbolic group.
The second paper will be devoted to proving an asymptotic of the form~(\ref{locallimittheoremgeneralform}), with $\alpha=3/2$, for non-spectrally degenerate random walks.
This property of spectral degeneracy was introduced in \cite{DussauleGekhtman} to study stability of the Martin boundary.
In this first paper, we introduce a looser condition, namely spectral positive-recurrence and prove some weaker estimates than~(\ref{locallimittheoremgeneralform}) for spectrally positive-recurrent random walks.

We insist on the fact that both papers are different and use very different techniques.
In particular, the second one is not an enhanced version of the first one.

\medskip
Before stating our main result, we introduce the following terminology.
Let $\mu$ be a finitely supported probability measure on a group $\Gamma$, which is relatively hyperbolic with respect to $\Omega$
and choose a finite collection $\Omega_0$ of representatives of conjugacy classes of $\Omega$.
We can look at the weight given by $\mu$ to a parabolic subgroup $\mathcal{H}$ in $\Omega_0$ in several ways.
One way is to compute the spectral radius of the induced random walk on $\mathcal{H}$ and to see if this spectral radius is 1.
This leads to the notion of spectral degenerescence and we refer to Section~\ref{Sectionspectralpositiverecurrence} for more details.
A weaker way is as follows.
Let $\mathcal{H}\in \Omega_0$.
We define the series of Green moments of $\mu$ along $\mathcal{H}$ as
$$I_\mathcal{H}^{(2)}(r)=\sum_{h,h'\in \mathcal{H}}G(e,h|r)G(h,h'|r)G(h',e|r).$$
\begin{definition}\label{definitionGreenmoments}
We say that $\mu$ (or equivalently the random walk) has finite Green moments if for every $\mathcal{H}\in \Omega_0$,
$I_\mathcal{H}^{(2)}(R_\mu)<+\infty$.
\end{definition}

\begin{definition}\label{definitiondivergent}
We say that $\mu$ (or equivalently the random walk) is divergent if $\frac{d}{dr}_{|r=R_\mu}G(e,e|r)=+\infty$ and that it is convergent otherwise.
\end{definition}

\begin{definition}\label{definitionspectralpositiverecurrent}
We say that $\mu$ (or equivalently the random walk) is spectrally positive-recurrent if it is divergent and has finite Green moments.
\end{definition}

We will give more explanations on these definitions in Section~\ref{Sectionspectralpositiverecurrence}.
Our main result is as follows.
For two functions $f,g$, we write $f\lesssim g$ if there exists $C$ such that $f\leq Cg$.
We write $f\asymp g$ if both $f\lesssim g$ and $g\lesssim f$.
If the implicit constant $C$ depends on some parameters, we will avoid using these notations, except if the dependency is clear from the context.

\begin{theorem}\label{maintheorem}
Let $\Gamma$ be a non-elementary relatively hyperbolic group.
Let $\mu$ be a finitely supported, admissible and symmetric probability measure on $\Gamma$.
Assume that the corresponding random walk is aperiodic and spectrally positive-recurrent.
Then,
$$p_n(e,e)\asymp R_\mu^{-n}n^{-3/2}.$$
\end{theorem}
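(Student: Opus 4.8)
The plan is to bracket $p_n(e,e)$ from above and below by $R_\mu^{-n}n^{-3/2}$ up to multiplicative constants, and to do this by transferring information about the Green function $G(e,e|r)$ near $r=R_\mu$ into asymptotics of $p_n(e,e)$ via Tauberian theory, exactly in the spirit of \cite{GouezelLalley} and \cite{Gouezel1}. The central object is the generating function $F(r)=G(e,e|r)=\sum_n p_n(e,e) r^n$, analytic in $|r|<R_\mu$. Since $\mu$ is symmetric and admissible, standard spectral-theoretic arguments give $p_n(e,e)\asymp R_\mu^{-n} \theta(n)$ for some subexponential $\theta$, so the whole question is the polynomial correction $\theta(n)\asymp n^{-3/2}$. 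For the \emph{lower bound} $p_n(e,e)\gtrsim R_\mu^{-n}n^{-3/2}$, I would use that the random walk is divergent: $\frac{d}{dr}_{|r=R_\mu}G(e,e|r)=+\infty$ means $\sum_n n\, p_n(e,e) R_\mu^{n-1}=+\infty$, which combined with the a priori subexponential decay forces $p_n(e,e)R_\mu^n$ not to be summable after multiplication by $n$; a Tauberian/averaging argument then prevents $\theta(n)$ from decaying faster than $n^{-3/2}$ along a density-one set, and hyperbolicity-type regularity of $p_n$ (e.g.\ near-submultiplicativity $p_{n+m}(e,e)\gtrsim p_n(e,e)p_m(e,e)$ via first-return decompositions, and the ratio-limit theorem) upgrades this to all $n$.

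For the \emph{upper bound} $p_n(e,e)\lesssim R_\mu^{-n}n^{-3/2}$, the key is the finite Green moments hypothesis. I would exploit the relatively hyperbolic structure: using relative automaticity (established earlier in the paper) one obtains a coding of the group by paths in an automaton whose "letters" are either a bounded-geometry hyperbolic part or excursions into cosets of the peripheral subgroups $\mathcal{H}\in\Omega_0$. This yields a first-return / renewal decomposition of $G(e,e|r)$ along such excursions, in which the peripheral contributions are governed precisely by the Green moments $I_{\mathcal{H}}^{(2)}(r)$. Finiteness of $I_{\mathcal{H}}^{(2)}(R_\mu)$ guarantees that the peripheral pieces do not create extra singularities at $R_\mu$, so that the singularity of $G(e,e|r)$ at $r=R_\mu$ is of the same square-root type $G(e,e|r)=G(e,e|R_\mu)-c\sqrt{R_\mu-r}+o(\sqrt{R_\mu-r})$ as in the genuinely hyperbolic case (this square-root behaviour itself coming from the hyperbolic part of the automaton, via a $u = F^2$-type implicit-function argument applied to the first-return generating function, whose derivative blows up exactly when the walk is divergent). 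Once one has, say, that $r\mapsto \partial_r G(e,e|r)$ is of order $(R_\mu-r)^{-1/2}$ near $R_\mu$ together with a uniform bound off the real axis, a Darboux / Tauberian theorem of Karamata type (or the singularity-analysis framework) gives $p_n(e,e)\asymp R_\mu^{-n}n^{-3/2}$.

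The two hypotheses enter in complementary ways: divergence produces the lower bound and forces the singularity exponent to be $\le 1/2$, while finite Green moments produces the upper bound by controlling the peripheral excursions and forcing the exponent to be exactly $1/2$. I would first reduce everything to a statement about the first-return generating function to $e$ (or to a horoball boundary), factor it through the automaton, isolate the peripheral Green-moment contributions, and only then invoke the Tauberian machinery.

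\medskip
The step I expect to be the main obstacle is the passage from the relative automatic structure to a clean renewal equation for $G(e,e|r)$ in which the peripheral contributions appear \emph{exactly} as (sums controlled by) the Green moments $I_{\mathcal{H}}^{(2)}(r)$, with enough uniformity to conclude that finite Green moments at $R_\mu$ prevents the creation of a worse singularity. Controlling the interplay between the hyperbolic coding and deep excursions into parabolic cosets — in particular showing the relevant generating functions extend past $R_\mu$ or at least have the correct one-sided regularity there, uniformly over the infinitely many cosets — is the technically delicate heart of the argument, and this is presumably where the "rough estimates for the Green function" promised in the abstract do the real work.
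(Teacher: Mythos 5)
Your proposal takes a genuinely different route from the paper, and the route you sketch has gaps that the paper's actual argument avoids.

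The paper does not go through a renewal decomposition of $G(e,e|r)$ via the automaton, and it does not try to establish a square--root singularity expansion $G(e,e|r)=G(e,e|R_\mu)-c\sqrt{R_\mu-r}+o(\sqrt{R_\mu-r})$. That kind of refined singularity analysis is precisely what is postponed to the second paper of the series; at the level of this paper only two--sided $\asymp$ bounds are available. The ``rough estimates for the Green function'' announced in the abstract are not a renewal equation: they are the differential inequality $G''(r)\asymp (G'(r))^3$ of Theorem~\ref{maintheoremGreen}, and the whole deduction of Theorem~\ref{maintheorem} is an elementary integration of this inequality. Integrating $G''/(G')^3\asymp 1$ from $r$ to $R<R_\mu$ gives $1/G'(r)^2-1/G'(R)^2\asymp R-r$; letting $R\to R_\mu$ and using divergence ($G'(R_\mu)=+\infty$) kills the second term, yielding $G'(r)\asymp (R_\mu-r)^{-1/2}$. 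So the step you flagged as ``the technically delicate heart of the argument'' is not where the work happens: the work was already done in Section~\ref{SectionGreenparabolics}, and the use you should be making of finite Green moments and divergence is through Theorem~\ref{maintheoremGreen}, not through isolating peripheral contributions in an excursion decomposition.

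Your Tauberian step is also not quite right as stated. A Karamata--type theorem applied to $G'$ only controls Ces\`aro sums $\sum_{k\le n} k R_\mu^k p_k(e,e)\asymp n^{1/2}$; passing from this to termwise asymptotics $p_n(e,e)R_\mu^n\asymp n^{-3/2}$ requires some monotonicity that $p_n$ does not enjoy. The ``near--submultiplicativity plus ratio--limit'' upgrade you propose does not give a two--sided bound at every $n$: ratio limit theorems give $p_n/p_{n+1}\to R_\mu$ but no polynomial control, and submultiplicativity only gives upper bounds. The paper instead quotes the specific decomposition from \cite[Theorem~9.4]{GouezelLalley}, $nR_\mu^np_n(e,e)=nq_n+O(\mathrm{e}^{-\alpha n})$ with $q_n$ non--increasing, after which an elementary lemma about non--increasing sequences (if $\sum_{k\le n}kb_k\asymp n^{\beta}$ with $b$ non--increasing then $b_n\asymp n^{\beta-2}$) finishes the job. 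Without this (or an equivalent device), your argument has a genuine gap between the Abel--side estimate and the pointwise claim. The part of your proposal that is correct in spirit is the division of labour --- divergence for the lower bound, finite Green moments for the upper bound --- but in the paper both are fed simultaneously into $G''\asymp (G')^3$, and the split does not happen at the level of $p_n$.
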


This theorem is a consequence of the following one.

\begin{theorem}\label{maintheoremGreen}
Let $\Gamma$ be a non-elementary relatively hyperbolic group.
Let $\mu$ be a finitely supported, admissible and symmetric probability measure on $\Gamma$.
Assume that the corresponding random walk has finite Green moments.
Then, as $r$ tends to $R_\mu$,
$$\frac{d^2}{dr^2}G(e,e|r)\asymp \left (\frac{d}{dr}G(e,e|r)\right )^3.$$
\end{theorem}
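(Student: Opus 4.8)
The plan is to analyze the Green function $G(e,e|r)$ via a decomposition of random-walk trajectories according to their excursions into the parabolic subgroups, which is the standard way relative hyperbolicity enters such estimates. Write $F(x,y|r) = \sum_n r^n \mathbb{P}_x(X_n = y, X_k \neq y \text{ for } k < n)$ for the first-passage generating function, so that $G(e,e|r) = \frac{1}{1 - U(r)}$ where $U(r) = F(e,e|r)$ is the first-return generating function. Since $\frac{d}{dr}G = \frac{U'}{(1-U)^2} = U' G^2$ and $\frac{d^2}{dr^2}G = U'' G^2 + 2 (U')^2 G^3$, the claimed equivalence $G'' \asymp (G')^3$ reduces, after dividing by $G^3 (U')^2$ and using $G \to \infty$, to showing that $\frac{U''}{(U')^2 G} \to 0$ is negligible compared to $2$, i.e. essentially that $\frac{d^2}{dr^2} U(r)$ stays bounded (or grows slowly enough) as $r \to R_\mu$. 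So the heart of the matter is a \emph{second-moment} estimate on the first-return function $U$, and the finite Green moments hypothesis should be exactly what controls it.

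First I would set up the combinatorial decomposition: using relative automaticity (established earlier in the paper) and the structure of relatively hyperbolic groups, decompose a first-return loop at $e$ into a bounded "relative" part in the hyperbolic direction and a concatenation of parabolic excursions. Each parabolic excursion into a coset $g\mathcal{H}$ contributes a factor governed by $G(h,h'|r)$ with $h,h' \in \mathcal{H}$, and summing a single excursion over entrance/exit points produces a quantity comparable to $I^{(2)}_\mathcal{H}(r)$ — this is the reason the Green moments $I^{(2)}_\mathcal{H}$ appear with three Green-function factors. I would show $U(R_\mu) < 1$ strictly (recurrence fails since the walk has a spectral radius $\geq 1$; here one uses finiteness of Green moments to bound the parabolic contributions uniformly up to $r = R_\mu$), so $G(e,e|R_\mu) < \infty$ only if the walk is convergent — and in the divergent case $G' \to \infty$, consistent with the statement; but Theorem \ref{maintheoremGreen} is stated for all walks with finite Green moments, so I must handle both. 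The key technical claim is: $\frac{d^2}{dr^2}U(r) \lesssim \big(\frac{d}{dr}U(r)\big)^2 + \big(\frac{d}{dr}U(r)\big)$ as $r \to R_\mu$, with implicit constants depending only on $\mu$, or more precisely that $U''$ is controlled by the square of the derivative of the relevant Green-moment type series.

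Concretely, differentiating the excursion decomposition twice and applying Leibniz, $U''$ is a finite sum of terms where two derivatives are distributed among the boundedly many pieces of the loop; each piece differentiated once yields something comparable to $\frac{d}{dr} G$ restricted to a parabolic or to $\frac{d}{dr}I^{(2)}_\mathcal{H}$, and each piece differentiated twice yields $\frac{d^2}{dr^2}$ of such a quantity. So the real obstacle reduces to an \emph{a priori intrinsic estimate inside each parabolic subgroup}: one must show $\frac{d^2}{dr^2} I^{(2)}_\mathcal{H}(r) \lesssim \big(\frac{d}{dr} I^{(2)}_\mathcal{H}(r)\big)^{3}$ or a comparable self-improving bound, and similarly relate derivatives of $\sum_{h} G(e,h|r)$ to each other — essentially a Tauberian/convexity input. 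I expect the main difficulty to be precisely this: controlling the \emph{second} $r$-derivative of the Green moment series at $R_\mu$ from the assumption that the zeroth derivative (the series itself) is finite there. The natural tool is a combination of (i) the resolvent identity $\frac{d}{dr}G(x,y|r) = \frac{1}{r}\sum_z G(x,z|r) G(z,y|r) - \frac{1}{r}G(x,y|r)$ expressing derivatives of Green functions as "sums over one extra point," so that $\frac{d^2}{dr^2}$ of $I^{(2)}_\mathcal{H}$ becomes a sum over two extra points of products of five Green functions, and (ii) a uniform relative-hyperbolicity inequality — a thin-triangle / Ancona-type inequality valid up to the spectral radius — allowing one to reassemble such a five-fold sum into a bounded number of copies of the cubed first derivative. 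Assembling (i) and (ii) into the clean bound, uniformly in $r < R_\mu$, is the crux; the rest is bookkeeping with the automatic structure and Leibniz's rule.
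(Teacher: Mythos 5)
Your algebraic setup contains a fundamental error that would derail the whole approach, and the key technical claim you propose has the wrong exponent.

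First, the statement ``using $G\to\infty$'' is false. Because $\Gamma$ is non-elementary and hence non-amenable, $G(e,e|R_\mu)<\infty$ always (this is recalled in Section~3.2 of the paper, citing Guivarc'h). The quantity that may blow up at $R_\mu$ is $G'$, not $G$. With $G$ bounded above and below by positive constants on a neighbourhood of $R_\mu$, your identities $G'=U'G^2$ and $G''=U''G^2+2(U')^2G^3$ reduce $G''\asymp(G')^3$ to $U''+(U')^2\asymp(U')^3$, i.e.\ (in the divergent case, where $U'\to\infty$) to $U''\asymp(U')^3$ --- \emph{cubic}, not quadratic. Your proposed key technical claim ``$U''\lesssim(U')^2+U'$'' is therefore off by a full power and, if true, would even contradict the required lower bound $U''\gtrsim(U')^3$. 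This is not a bookkeeping slip: the whole route through the first-return generating function $U$ gives you exactly the same cubic relation to prove, just with $U$ in place of $G$, so no reduction is achieved.

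Second, the route you sketch for bounding $U''$ --- differentiating an excursion decomposition and asking for a self-improving bound such as $\frac{d^2}{dr^2}I^{(2)}_{\mathcal H}\lesssim\bigl(\frac{d}{dr}I^{(2)}_{\mathcal H}\bigr)^3$ --- demands strictly more than the hypothesis. Finite Green moments is only the finiteness of $I^{(2)}_{\mathcal H}(R_\mu)$; its $r$-derivatives may well be infinite, and the paper never needs to control them. The paper's argument (Proposition~5.7) instead establishes, for all $r<R_\mu$, the \emph{spatial} identity
\[
\frac{I^{(2)}(r)}{I^{(1)}(r)^3}\asymp 1+\sum_k I^{(2)}_k(r),
\]
which, via Lemma~3.2 relating $I^{(k)}$ to the $k$-th derivatives of $G$, is equivalent to $G''_r\asymp(G'_r)^3(1+\sum_k G^{(2)}_{k,r})$; the hypothesis is used exactly once, to make the right-hand bracket bounded as $r\to R_\mu$. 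The proof of that identity is geometric: one fixes relative geodesics $[e,\gamma]$, locates the nearest-point projection $\gamma_0$ of $\gamma'$ onto $[e,\gamma]$ in $\hat\Gamma$, applies weak relative Ancona inequalities at $\gamma_0$, and splits into the two cases where $[\gamma_0,\gamma]$ and $[\gamma_0,\gamma']$ leave $\gamma_0$ in different parabolic cosets (contributing $\asymp(I^{(1)})^3$) or in the same coset $\gamma_0\mathcal H_k$ (contributing $\asymp(I^{(1)})^3 I^{(2)}_k$). None of this appears in your sketch; the Leibniz bookkeeping you describe does not substitute for the case analysis, and the Ancona inequality is used in a very specific way (at transition/projection points on relative geodesics), not as a generic ``reassembly'' principle.
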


\subsection{Organization of the paper}\label{Sectionorganization}
We now give some more details about our proofs and briefly describe the content of the paper.

We first give in Section~\ref{Sectionrelativelyhyperbolicgroups} several equivalent definitions of relatively hyperbolic groups and we review basic results about such groups.

In Section~\ref{Sectionpreliminaries}, we give some preliminary results on the Green function that will be used along the paper.
We first give expressions of the derivatives of the Green function in terms of spatial sums on the group, following the work of S.~Gou\"ezel and S.~Lalley, see precisely Lemma~\ref{lemmageneralformuladerivatives}.
As announced in the introduction, we also give more explanations on Definitions~\ref{definitionGreenmoments},~\ref{definitiondivergent} and~\ref{definitionspectralpositiverecurrent}, making an analogy with similar definitions in the context of counting theorems on Kleinian groups.
We recall the definition of spectral degenerescence, introduced in \cite{DussauleGekhtman}, and explain why non-spectral degenerescence implies finiteness of the Green moments.

In Section~\ref{Sectioncoding}, we introduce the notion of relative automaticity and we prove that relatively hyperbolic groups are relatively automatic, see Definition~\ref{definitionautomaticstructure} and Theorem~\ref{thmcodingrelhypgroups}.
The proof is analogous to Cannon's proof \cite{Cannon} for coding geodesics in hyperbolic groups (see also \cite{GhysHarpe}).
We show that there is a finite number of what we call relative cone-types.
This should be compared with the notion of partial cone type introduced in \cite{Yang2},
where it is shown that there is a finite number of them too.
This result is not surprising and is kind of implicit in B.~Farb's work (see \cite{Farbthesis}, \cite{Farb}), although it is not properly stated in there.
We do use the notion of relatively automatic groups in the present paper, but this is mainly for convenience.
On the contrary, it will be of great importance in the next paper.
We will use there relatively automatic structures to relate asymptotic properties of the Green function with asymptotic properties of some operators defined on a countable Markov shift associated with a relatively hyperbolic group.
Theorem~\ref{thmcodingrelhypgroups} will thus be a crucial tool.

In Section~\ref{SectionGreenparabolics}, we prove Theorem~\ref{maintheoremGreen}.
A similar result is given in \cite{Gouezel1} for hyperbolic groups but our proof is more difficult and our estimates also involve the second derivative of the Green function associated with the first return transition kernels on the parabolic subgroups.
Along the way, we obtain  other estimates involving the Green function of the parabolic subgroups and the Green function of the whole group.
We also show that spectral degenerescence of the measure $\mu$ in the sense of \cite{DussauleGekhtman} implies that $\mu$ is divergent in the sense of Definition~\ref{definitiondivergent}, so that non-spectral degenerescence implies spectral positive-recurrence.

We finish the proof of our main result in Section~\ref{SectionTauberian}.
We prove a weak version of Karamata's Tauberian theorem and use it, together with technical results from \cite{GouezelLalley}, to deduce Theorem~\ref{maintheorem} from Theorem~\ref{maintheoremGreen}.

\medskip
Finally, let us mention that we will repeatedly use weak relative Ancona inequalities up to the spectral radius.
These inequalities state the following.
Let $\Gamma$ be a relatively hyperbolic group, let $\mu$ be an admissible probability measure on $\Gamma$.
Let $x,y,z\in \Gamma$ be such that $y$ is on a relative geodesic from $x$ to $z$, or more generally $y$ is within a uniform bounded distance of a transition point on a geodesic from $x$ to $z$ in the Cayley graph of $\Gamma$ (see Section~\ref{Sectionrelativelyhyperbolicgroups} for more details on these notions).
Then, for every $r\leq R_\mu$,
\begin{equation}\label{equationAncona}
    G(x,z|r)\asymp G(x,y|r)G(y,z|r).
\end{equation}
The implicit constant is asked not to depend on $r,x,y,z$.
In other words, the Green function is roughly multiplicative along relative geodesics (uniformly in $r$).
One of the main result of \cite{DussauleGekhtman} is that weak relative Ancona inequalities hold for any finitely supported, symmetric, admissible probability measure $\mu$ on a relatively hyperbolic group $\Gamma$.
Note that there also exist strong relative Ancona inequalities,
which were also proved in \cite{DussauleGekhtman}.
Since strong inequalities are technical to state and since we will not need them in this paper, we do not mention them here.
However they will be a crucial tool in the second paper.

\subsection{Acknowledgements}\label{Sectionacknowledgements}
The author thanks I.~Gekhtman and L.~Potyagailo for many helpful conversations about relatively hyperbolic groups
and S.~Tapie for explanations on counting theorems.
He also thanks Mateusz Kwaśnicki for his help on weak Tauberian theorems.

\section{Relatively hyperbolic groups}\label{Sectionrelativelyhyperbolicgroups}
There are several equivalent definitions of relative hyperbolicity.
We saw one above in terms of geometric actions.
Let us give more details now.

Consider a finitely generated group $\Gamma$ acting discretely and by isometries on a proper and geodesic hyperbolic space $(X,d)$.
Choose a base point $o\in X$.
The limit set of $\Gamma$ is the set of accumulation points of the orbit of $o$ in the Gromov boundary of $X$.
It does not depend on the choice of $o$.
We denote this limit set by $\Lambda \Gamma$.
Recall that an element $\gamma$ of $\Gamma$ is called elliptic if it fixes some point in $X$.
Otherwise, either it fixes exactly one point in $\Lambda \Gamma$, or it fixes exactly two points in $\Lambda \Gamma$, one being attractive and the other one being repelling.
In the first case, $\gamma$ is called parabolic and in the second case, it is called loxodromic, see \cite[Chapter~8]{GhysHarpe} for more details.

A point $\xi\in \Lambda \Gamma$ is called conical if there is a sequence $(\gamma_{n})$ of $\Gamma$ and distinct points $\xi_1,\xi_2$ in $\Lambda \Gamma$ such that
$\gamma_{n}\xi$ converges to $\xi_1$ and $\gamma_{n}\zeta$ converges to $\xi_2$ for all $\zeta\neq \xi$ in $\Lambda \Gamma$.
A point $\xi\in \Lambda \Gamma$ is called parabolic if its stabilizer in $\Gamma$ is infinite, fixes exactly $\xi$ in $\Lambda\Gamma$ and contains no loxodromic element.
A parabolic limit point $\xi$ in $\Lambda \Gamma$ is called bounded parabolic if its stabilizer in $\Gamma$ is infinite and acts cocompactly on $\Lambda \Gamma \setminus \{\xi\}$.

Say that the action of $\Gamma$ on $X$ is geometrically finite if the limit set only consists of conical limit points and bounded parabolic limit points.
Then, say that $\Gamma$ is relatively hyperbolic with respect to $\Omega$ if it acts geometrically finitely on such a hyperbolic space $(X,d)$ such that the stabilizers of the parabolic limit points are exactly the elements of $\Omega$.
We say that $\Gamma$ is elementary if the limit set only consists of zero, one or two points.
Otherwise it is infinite.
In the following, we will always assume that the group $\Gamma$ is non-elementary.

Moreover, we can always assume (and we will) that the limit set is the entire Gromov boundary of $X$, up to changing $X$ into the closed convex hull of the limit set, see \cite[Section~6]{Bowditch}.
Denote by $\partial X$ this Gromov boundary.
One might choose different spaces $X$ on which $\Gamma$ can act geometrically finitely.
However, different choices of $X$ give rise to equivariantly homeomorphic boundaries $\partial X$.
We call $\partial X$ the Bowditch boundary of $\Gamma$ and we denote it by $\partial_B\Gamma$ when we de not want to refer to a space $X$ on which $\Gamma$ acts.

\medskip
We will also be interested in a combinatorial description of relatively hyperbolic groups.
This was developed by B.~Farb in \cite{Farbthesis}, see also \cite{Farb}, but it will be more convenient to use D.~Osin's terminology \cite{Osin}.
Consider a finitely generated group $\Gamma$, together with a collection of subgroups $\Omega$ and assume that $\Omega$ is stable by conjugacy and has a finite number of conjugacy classes.
Choose one representative of each conjugacy class to form a set of representatives $\Omega_0=\{\mathcal{H}_1,...,\mathcal{H}_N\}$.

Consider a finite generating set $S$ of $\Gamma$ and denote by $\Cay(\Gamma,S)$ the corresponding Cayley graph.
We will also use the Cayley graph of $\Gamma$ with respect to the infinite generating set $S\cup \bigcup_{1\leq n\leq N}\mathcal{H}_n$.
To avoid confusion in the terminology, we will denote this other Cayley graph by $\hat{\Gamma}$, in reference to Farb's notations.
We denote by $\hat{d}$ the graph distance in $\hat{\Gamma}$.

\begin{definition}
Say that $\Gamma$ is weakly relatively hyperbolic with respect to $\Omega$ if $\hat{\Gamma}$ is hyperbolic.
This is independent of the choices of $\Omega_0$ and $S$.
\end{definition}

In the following, what we will call a path in $\Gamma$ or in $\hat{\Gamma}$ will be a sequence of \textit{adjacent} vertices in the corresponding graph, not just any sequence of vertices.
We label the edges of the path with the corresponding element of $S$ or $S\cup \bigcup \mathcal{H}_n$.
Note that a path in $\Gamma$ induces a path in $\hat{\Gamma}$ but the converse is not true in general.

A relative geodesic is then a path of minimal length between its endpoints in $\hat{\Gamma}$.
A relative $(\lambda,c)$-quasi geodesic path is a path $\alpha=(\gamma_1,...,\gamma_n)$ in $\hat{\Gamma}$ which is also a $(\lambda,c)$-quasi geodesic, that is for all $k,l$,
$$\frac{1}{\lambda}|k-l|-c\leq \hat{d}(\gamma_k,\gamma_l)\leq \lambda|k-l|+c.$$

We say that a path enters the coset $\gamma \mathcal{H}_n$ of a parabolic subgroup $\mathcal{H}_n$ if there is a vertex in this path which is an element of $\gamma \mathcal{H}_n$ and which is followed by an edge labeled with an element of $\mathcal{H}_n$.
Consider then a maximal subpath with vertices in $\gamma \mathcal{H}_n$ and labeled with elements of $\mathcal{H}_n$.
Such a subpath is called a $\mathcal{H}_n$-component.
The entering point (respectively exit point) of the $\mathcal{H}_n$-component is the first (respectively last) vertex of this subpath and we say that the path leaves $\gamma \mathcal{H}_n$ at the exit point.

We also say that the path travels more than $r$ in $\gamma \mathcal{H}_n$ if the distance in $\Cay(\Gamma,S)$ between the entering point and the exit point is larger than $r$.

Finally, we  say that a path is without backtracking if once it has left a coset $\gamma \mathcal{H}_n$, it never goes back to it.

\begin{definition}
Say that the pair $(\Gamma,\Omega)$ satisfies the bounded coset penetration property (BCP for short) if for all $\lambda,c$, there exists a constant $C_{\lambda,c}$ such that for every pair $(\alpha_1,\alpha_2)$ of relative $(\lambda,c)$-quasi geodesic paths without backtracking, starting and ending at the same point in $\Gamma$, the following holds
\begin{enumerate}
\item if $\alpha_1$ travels more than $C_{\lambda,c}$ in a coset, then $\alpha_2$ enters this coset,
\item if $\alpha_1$ and $\alpha_2$ enter the same coset, the two entering points and the two exit points are $C_{\lambda,c}$-close to each other in $\Cay(\Gamma,S)$.
\end{enumerate}
\end{definition}

Again, this definition does not depend on the choices of $\Omega_0$ and $S$.
The following is proved in \cite{Osin}.
\begin{proposition}
The group $\Gamma$ is relatively hyperbolic with respect to $\Omega$ if and only if it is weakly relatively hyperbolic with respect to $\Omega$ and if the pair $(\Gamma,\Omega)$ satisfies the BCP property.
\end{proposition}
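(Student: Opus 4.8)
The plan is to establish both implications of this equivalence. Roughly speaking, the forward direction says Bowditch's dynamical definition implies Farb's combinatorial one (with the BCP correction), and the backward direction is the converse; a complete argument is in \cite{Osin}, which proceeds through the relative Dehn function as an intermediate characterisation, but I would instead sketch a direct geometric route between the two formulations in the statement. Throughout, $\Gamma$ is finitely generated and $\Omega$ is conjugation-closed with finitely many conjugacy classes, so these hypotheses are available on both sides.

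\textbf{From geometric finiteness to (weak relative hyperbolicity $+$ BCP).} Assuming $\Gamma$ acts geometrically finitely on a proper geodesic hyperbolic space $(X,d)$ with the parabolic point stabilisers being exactly the subgroups in $\Omega$, I would first fix a $\Gamma$-invariant family of pairwise disjoint horoballs, one centred at each parabolic point, and form the truncated space obtained by deleting their interiors. By Bowditch's characterisation of geometric finiteness \cite{Bowditch}, $\Gamma$ acts cocompactly on this truncated space, so $\Gamma$ with a word metric is quasi-isometric to it, and each peripheral subgroup $\mathcal{H}_n$ is finitely generated and quasi-isometrically embedded in the corresponding horosphere. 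Next I would electrify the horospheres, i.e. collapse each horoball to bounded diameter: horoballs in a hyperbolic space are uniformly quasiconvex, so the collapsing map sends genuine geodesics of $X$ to uniform quasi-geodesics and hyperbolicity descends to the collapsed space, which is quasi-isometric to $\hat{\Gamma}$; this gives weak relative hyperbolicity. For BCP, I would lift two relative $(\lambda,c)$-quasi-geodesics without backtracking sharing their endpoints to genuine quasi-geodesics of $X$ tracking the $X$-geodesic between those endpoints; two quasi-geodesics of a hyperbolic space with common endpoints are uniformly Hausdorff close, and the depth to which such a quasi-geodesic penetrates a fixed horoball is comparable to the distance it travels in the corresponding coset. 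Translating back through the quasi-isometry yields both clauses of BCP, with constants depending only on $\lambda,c$.

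\textbf{From (weak relative hyperbolicity $+$ BCP) to geometric finiteness.} Conversely, starting from a hyperbolic $\hat{\Gamma}$ satisfying BCP, I would build the cusped space $X$ following Groves--Manning, gluing a combinatorial horoball onto $\Cay(\Gamma,S)$ along each coset $\gamma\mathcal{H}_n$. Then $\Gamma$ acts on $X$ properly by isometries with finitely many orbits of cells, the infinite stabilisers of the apices are exactly the conjugates of the $\mathcal{H}_n$, and $X$ is proper (combinatorial horoballs are locally finite since BCP forces each $\mathcal{H}_n$ to be finitely generated) and geodesic. The main work is to show $X$ is Gromov hyperbolic, and this is where BCP enters, via a \emph{tracking lemma}: a geodesic of $X$ between two vertices of $\Cay(\Gamma,S)$, read after collapsing the horoballs, is a relative $(\lambda,c)$-quasi-geodesic without backtracking with uniform $\lambda,c$, and conversely a relative geodesic lifts to a uniform quasi-geodesic of $X$; BCP then forces any two such lifts with common endpoints to enter the same horoballs to within bounded depth. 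Feeding this into a thin-triangles check---comparing a geodesic triangle of $X$ with the corresponding relative quasi-geodesic triangle of the hyperbolic graph $\hat{\Gamma}$ and controlling the three families of horoball excursions---gives $\delta$-thinness in $X$. Finally I would read off the dynamics on $\partial X$: the apex over $\gamma\mathcal{H}_n$ is a bounded parabolic point with stabiliser $\gamma\mathcal{H}_n\gamma^{-1}$ (the cocompact action on the horospherical complement of the apex again comes from BCP), while every other point of $\partial X$ is the limit of an orbit escaping all cusps and hence is conical by the usual convergence-action argument. So the action is geometrically finite with peripheral structure $\Omega$.

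\textbf{Main obstacle.} In both directions the crux is the same: controlling hyperbolicity under the two-way passage between the thick Cayley graph $\Cay(\Gamma,S)$ and the horoball-completed (or horoball-collapsed) space. A priori, collapsing horoballs can destroy hyperbolicity and gluing them on can too; the BCP property is exactly the combinatorial hypothesis that prevents this, forcing relative quasi-geodesics with the same endpoints to fellow-travel in the strong sense of entering the same cosets at uniformly close points. Making the tracking lemma quantitative, with all constants uniform in the relevant quasi-geodesic constants, is the technical heart and is the step I expect to be the real obstacle; this is carried out in full in \cite{Osin}.
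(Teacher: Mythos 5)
The paper does not actually prove this proposition: it is stated as a black-box import, with the proof attributed entirely to \cite{Osin} (the equivalence of Bowditch's dynamical definition with Farb/Osin's combinatorial one is a foundational result, not an original contribution here). Your sketch is therefore not comparable to anything in the paper itself, but it does follow the standard route in the literature: for the forward direction, Bowditch's cocompact action on the horoball-truncated space followed by electrification of the (uniformly quasiconvex) horoballs, and for the converse, the Groves--Manning cusped space with a tracking lemma relating its geodesics to relative quasi-geodesics without backtracking. As an outline this is sound, but be aware of where the real content sits, since you currently state rather than prove it: (i) "hyperbolicity descends under collapsing" is exactly the electrification theorem of Farb/Bowditch/Kapovich--Rafi and needs the bounded-penetration analysis, not just quasiconvexity; (ii) hyperbolicity of the cusped space from weak relative hyperbolicity plus BCP is the technical heart of the converse and your "thin-triangles check" compresses several pages of Osin/Groves--Manning-type combinatorial work; (iii) the parenthetical claim that BCP forces each $\mathcal{H}_n$ to be finitely generated is itself a nontrivial theorem of Osin, not an immediate consequence; and (iv) bounded parabolicity of the apices (cocompactness of $\gamma\mathcal{H}_n\gamma^{-1}$ on $\Lambda\Gamma\setminus\{\xi\}$) and conicality of the remaining limit points each require a genuine convergence-group argument. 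Since the paper itself defers all of this to \cite{Osin}, your proposal is acceptable as a roadmap, but as written it is a plan whose decisive steps are outsourced rather than a self-contained proof.
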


Note that mapping class groups are weakly relatively hyperbolic (see \cite{MasurMinsky}) but are not relatively hyperbolic (see \cite{BehrstockDrutuMosher}).
Our finite automaton coding relative geodesics would still code relative geodesics in mapping class groups but would not be finite anymore (we do use the BCP property a lot).
However, one could maybe arrange it to have only finitely many recurrence classes.

\medskip
One important aspect in relatively hyperbolic groups is the notion of transition points.
If $\alpha$ is a geodesic in the Cayley graph $\Cay(\Gamma,S)$, a point on $\alpha$ is called a transition point if it is not deep in a parabolic subgroup.
More precisely, let $\eta_1,\eta_2>0$.
A point $\gamma$ on a geodesic $\alpha$ in $\Cay(\Gamma,S)$ is called $(\eta_1,\eta_2)$-deep if the part of $\alpha$ containing the points at distance at most $\eta_2$ from $\gamma$
is contained in the $\eta_1$-neighborhood of a coset $\gamma_0\mathcal{H}$ of a parabolic subgroup $\mathcal{H}$.
Otherwise, $\gamma$ is called a $(\eta_1,\eta_2)$-transition point.
We refer to \cite{Hruska} and \cite{GerasimovPotyagailo} for more details.

Consider a path $\alpha=(v_1,...,v_n)$ in $\hat{\Gamma}$ that starts with a point $v_1$ and ends with a point $v_n$.
Define its lift $\tilde{\alpha}=(\tilde{v}_1,...\tilde{v}_{k_n})$ in the Cayley graph of $\Gamma$ as follows.
Start with $\tilde{v}_1=v_1$.
If $v_2$ satisfies that $v_1^{-1}v_2\in S$, define $\tilde{v_2}=v_2$. Otherwise, $\tilde{v}_2\in v_1\mathcal{H}$, for some parabolic subgroup $\mathcal{H}$.
Choose then a geodesic between $v_1$ and $v_2$ and denote its elements by $\tilde{v_2},..,\tilde{v}_{k_2}$, so that $\tilde{v}_{k_2}=v_2$.
We do the same with $v_3$, that is if $v_3$ is obtained from $v_2$ by adding an element in $S$, we define $\tilde{v}_{k_2+1}=v_3$, otherwise, we choose a geodesic from $\tilde{v}_{k_2}$ to $v_3$, which is now denoted by $\tilde{v}_{k_3}$.
We keep doing this for every element $v_4,...,v_n$.
Note that according to \cite[Theorem~1.12]{DrutuSapir}, a geodesic whose endpoints are in the same coset of a parabolic subgroup stays in a fixed neighborhood of this parabolic subgroup.
Thus, in other words, the lift of the relative geodesic is obtained replacing the shortcuts in $\hat{\Gamma}$ with actual geodesics in the neighborhood of the corresponding parabolic subgroup.
Moreover, we have the following.

\begin{lemma}\label{lemmaliftgeodesic}\cite[Proposition~7.8, Corollary~7.10]{GerasimovPotyagailo}
There exist $(\lambda,c)$ and $(\eta_1,\eta_2)$ such that if $\alpha$ is a relative geodesic in $\hat{\Gamma}$, then its lift $\tilde{\alpha}$ is a $(\lambda,c)$-quasi geodesic.
Moreover, points in $\tilde{\alpha}$ that are obtained from lifting points in $\alpha$ are $(\eta_1,\eta_2)$-transition points on $\tilde{\alpha}$.
\end{lemma}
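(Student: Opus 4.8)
The plan is to extract both parts of the statement from the BCP property, together with two further inputs: the fact recalled just before the statement (from \cite{DrutuSapir}) that a geodesic of $\Cay(\Gamma,S)$ with both endpoints in a coset $g\mathcal{H}_n$ remains in a fixed neighbourhood $\mathcal{N}_{\delta_0}(g\mathcal{H}_n)$, and the standard consequence of BCP that distinct peripheral cosets have uniformly bounded coarse intersection: there is $D_k$, depending only on $k$, with $\operatorname{diam}_S\!\big(\mathcal{N}_k(g\mathcal{H}_n)\cap\mathcal{N}_k(g'\mathcal{H}_m)\big)\le D_k$ whenever $g\mathcal{H}_n\neq g'\mathcal{H}_m$. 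I will also use repeatedly that $\hat d\le d$ on $\Gamma$ (write $d=d_S$), that a relative geodesic never backtracks — once it leaves a coset it does not return to it, by minimality in $\hat\Gamma$ — and that an $S$-geodesic, read in $\hat\Gamma$ with backtracking removed, is a relative quasi-geodesic with universal constants.

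\emph{Quasi-geodesicity.} The upper inequality is immediate since $\tilde\alpha$ is a path: $d(\tilde v_p,\tilde v_q)\le|p-q|$. For the lower one, observe that inside a single lifted coset-component $\tilde\alpha$ coincides with an $S$-geodesic, so one reduces to the case where $\tilde v_p=v_i$ and $\tilde v_q=v_j$ are vertices of $\alpha$ (the general case is identical after truncating the two end-components of $\alpha|_{[i,j]}$). Writing $\ell_l$ for the $S$-length of the $l$-th component of $\alpha$ between $v_i$ and $v_j$ (with $\ell_l=1$ for an $S$-edge), we have $q-p=\sum_l\ell_l$, while the number of components is $j-i=\hat d(v_i,v_j)\le d(v_i,v_j)$. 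Let $\beta$ be an $S$-geodesic from $v_i$ to $v_j$; applying BCP to the pair consisting of $\alpha|_{[i,j]}$ and the relative projection of $\beta$ without backtracking, every component in which $\alpha$ travels more than the BCP constant $C$ is also penetrated by $\beta$, with entrance and exit points matching those of $\alpha$ up to distance $C$. These penetrated sub-arcs of $\beta$ are pairwise disjoint and occur in the same order along $\beta$, whence $|\beta|\ge\sum_{\ell_l>C}(\ell_l-2C)$. Combining with the trivial bound $\sum_{\ell_l\le C}\ell_l\le C(j-i)$ and $j-i\le|\beta|$ gives $\sum_l\ell_l\le(1+3C)\,d(v_i,v_j)$, that is $d(\tilde v_p,\tilde v_q)\ge\frac{1}{1+3C}|p-q|$: the $(\lambda,c)$-quasi-geodesic inequality with $\lambda=1+3C$ and a bounded additive constant from the reduction.

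\emph{Transition points.} Let $v_i$ be a lifted point lying at arc-length $\ge\eta_2$ from both ends of $\tilde\alpha$, and suppose for contradiction that it is $(\eta_1,\eta_2)$-deep, witnessed by a coset $\mathcal{C}$: the portion $\tau$ of $\tilde\alpha$ within $d$-distance $\eta_2$ of $v_i$ lies in $\mathcal{N}_{\eta_1}(\mathcal{C})$. Since $\mathcal{N}_{\eta_1}(\mathcal{C})$ has $\hat d$-diameter at most $2\eta_1+2$ and $\alpha$ is a relative geodesic, at most $2\eta_1+3$ vertices of $\alpha$ lie in $\tau$; on the other hand, because arc-length dominates $d$, $\tau$ contains an arc of $\tilde\alpha$ of length $\eta_2$ on each side of $v_i$. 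Hence on each side $\tilde\alpha$ travels at least $\eta_2/(2\eta_1+3)$, inside $\mathcal{N}_{\eta_1}(\mathcal{C})$, through a single component $\sigma_l$ of $\alpha$; once $\eta_2>2\eta_1+3$ this component is not an $S$-edge, so it is a coset-geodesic contained in $\mathcal{N}_{\delta_0}(v_l\mathcal{H}_{n_l})$ by \cite{DrutuSapir}. A sub-arc of $\sigma_l$ of $d$-length at least $\eta_2/(2\eta_1+3)$ then lies in $\mathcal{N}_{\eta_1}(\mathcal{C})\cap\mathcal{N}_{\delta_0}(v_l\mathcal{H}_{n_l})$, so bounded coarse intersection forces $v_l\mathcal{H}_{n_l}=\mathcal{C}$ as soon as $\eta_2>(2\eta_1+3)\,D_{\max(\eta_1,\delta_0)}$. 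Carrying this out on both sides of $v_i$ yields two distinct components of $\alpha$ contained in the same coset $\mathcal{C}$, contradicting the absence of backtracking. Hence $v_i$ is an $(\eta_1,\eta_2)$-transition point.

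The quasi-geodesic part is routine bookkeeping around BCP; the main obstacle is the transition-point statement, which hinges on the uniform bounded coarse intersection of distinct peripheral cosets and on tuning $(\eta_1,\eta_2)$ so that a component of $\alpha$ adjacent to $v_i$ is either too short to matter or long enough to pin down the witnessing coset. One genuine caveat: if $\alpha$ begins or ends with a parabolic component through which it travels more than $\eta_2$, the extreme lifted vertex is actually deep, so those finitely many boundary points must be excluded, or the statement there read up to bounded distance — harmless for our applications.
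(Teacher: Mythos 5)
The paper itself offers no proof of this lemma (it simply cites Gerasimov--Potyagailo and, for the first half, Dru\c{t}u--Sapir), so the only question is whether your argument stands on its own. Your second half essentially does: the transition-point argument is correct and self-contained given the Dru\c{t}u--Sapir fact quoted just before the lemma (lifted components stay in a $\delta_0$-neighbourhood of their coset) and the uniformly bounded coarse intersection of distinct peripheral cosets, which is indeed standard. Counting lifted vertices inside a set of $\hat{d}$-diameter $\le 2\eta_1+2$, the pigeonhole producing a long sub-arc of a single component, pinning the component's coset down to $\mathcal{C}$ by coarse intersection, and the contradiction with the fact that a $\hat{d}$-geodesic cannot have two components in the same coset are all sound; the endpoint caveat you flag is only an artifact of the paper's abbreviated definition of deep points (the standard definition excludes points within $\eta_2$ of the endpoints of the path).

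The first half, however, has a genuine gap, in two places. First, to apply BCP to the pair $(\alpha|_{[i,j]},\hat{\beta})$ you need the ``relative projection of $\beta$ without backtracking'' to be a relative $(\lambda,c)$-quasi-geodesic path without backtracking with \emph{uniform} constants. This is not a formal consequence of the BCP property, it is not among the paper's stated preliminaries, and it is itself a nontrivial comparison between $\Cay(\Gamma,S)$ and $\hat{\Gamma}$ of essentially the same depth as the statement you are proving: it is true, but proving it is precisely the content of the Osin/Dru\c{t}u--Sapir/Hruska-type results this lemma is quoted from, so invoking it as ``standard'' makes your proof of the quasi-geodesic half close to circular. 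Second, the reduction ``the general case is identical after truncating the two end-components'' is not immediate: if $\tilde{v}_p$ sits deep inside a lifted coset component with far endpoint $B$, you need an estimate of the form $d(\tilde{v}_p,B)\le \lambda' d(\tilde{v}_p,\tilde{v}_q)+c'$, and bounded coarse intersection only yields $d(\tilde{v}_p,B)\le D_{d(\tilde{v}_p,\tilde{v}_q)+\delta_0}$, which need not be linear in $d(\tilde{v}_p,\tilde{v}_q)$; repairing this requires a projection argument onto the coset (for instance \cite[Lemma~1.13]{Sisto2}, which the paper uses elsewhere) showing that $\tilde{v}_q$ projects near $B$. So either supply proofs of these two points or fall back on the citation the paper uses; as written, the quasi-geodesicity half is not established.
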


Note that the first part of the lemma (that the lift of a relative geodesic is a quasi-geodesic) was also stated by C.~Drutu and M.~Sapir, see \cite[Theorem~1.12]{DrutuSapir}.
It is also proved there that the lift of a relative geodesic stays within a bounded distance of a geodesic.
The converse is also true.
Precisely, we have the following.

\begin{lemma}\label{projectiontransitionpoints}\cite[Proposition~8.13]{Hruska}
Fix a generating set $S$.
For every large enough $\eta_1,\eta_2>0$, there exists $r\geq0$ such that the following holds.
Let $\alpha$ be a geodesic in $\Cay(\Gamma,S)$ and let $\gamma$ be an $(\eta_1,\eta_2)$-transition point on $\alpha$.
Let $\hat{\alpha}$ be a relative geodesic path with the same endpoints as $\alpha$.
Then, there exists a point $\hat{\gamma}$ on $\hat{\alpha}$ such $d(\gamma,\hat{\gamma})\leq r$.
\end{lemma}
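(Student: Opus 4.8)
The plan is to compare the honest geodesic $\alpha$ with the relative geodesic $\hat\alpha$ by routing through the hyperbolic graph $\hat\Gamma$, where both become uniform quasi-geodesics, and then to push the resulting fellow-travelling back from $\hat d$ to $d$ along transition points by means of the BCP property. The one structural obstacle throughout is that $\Cay(\Gamma,S)$ is itself \emph{not} hyperbolic, so the Morse lemma is unavailable directly; BCP is precisely what compensates. Note first that we may assume $\hat\alpha$ is without backtracking, since a relative geodesic penetrates any given peripheral coset along at most a single edge (two of its vertices lying in a common coset are at $\hat d$-distance at most $1$).

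First I would form the lift $\tilde{\hat\alpha}$ of $\hat\alpha$ in $\Cay(\Gamma,S)$ as described in the excerpt, replacing each peripheral edge by an $S$-geodesic inside the relevant coset. By Lemma~\ref{lemmaliftgeodesic} and \cite[Theorem~1.12]{DrutuSapir} there are universal constants for which: $\tilde{\hat\alpha}$ is a $(\lambda,c)$-quasi-geodesic in $\Cay(\Gamma,S)$, it stays within a bounded $d$-distance of any $S$-geodesic with the same endpoints, each maximal lifted-in segment remains in a bounded neighbourhood of a single peripheral coset, and the vertices of $\tilde{\hat\alpha}$ coming from vertices of $\hat\alpha$ are $(\eta_1',\eta_2')$-transition points of $\tilde{\hat\alpha}$. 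The last two facts also give a converse: every $(\eta_1,\eta_2)$-transition point of $\tilde{\hat\alpha}$ is within a bounded $d$-distance of a vertex of $\hat\alpha$, because a point of $\tilde{\hat\alpha}$ that is far in $d$ from both endpoints of the lifted-in segment containing it has an $\eta_2$-neighbourhood on $\tilde{\hat\alpha}$ trapped in the $\eta_1$-neighbourhood of one coset, hence is deep rather than transition.

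It thus remains to establish a relative Morse property: every $(\eta_1,\eta_2)$-transition point of $\alpha$ lies within a bounded $d$-distance, depending only on $\Gamma,S,\eta_1,\eta_2$, of a transition point of $\tilde{\hat\alpha}$; combining this with the previous paragraph and the triangle inequality produces the vertex $\hat\gamma$ of $\hat\alpha$ with $d(\gamma,\hat\gamma)\le r$. To prove the relative Morse property I would view $\alpha$ as a path in $\hat\Gamma$, remove its backtracking to obtain a relative $(\lambda_0,c_0)$-quasi-geodesic without backtracking from $x$ to $y$ (a standard consequence of relative hyperbolicity: an $S$-geodesic cannot re-enter a peripheral coset efficiently, and its portions outside peripheral cosets stay efficient in $\hat d$), observe that this removal of backtracking displaces no transition point of $\alpha$ far (the collapsed stretches lie in a bounded neighbourhood of a single coset, again by \cite[Theorem~1.12]{DrutuSapir}), apply stability of quasi-geodesics in the hyperbolic graph $\hat\Gamma$ to make this path fellow-travel $\hat\alpha$ in $\hat d$, and finally feed the pair into the BCP property: every coset in which $\alpha$ travels more than the BCP constant is entered by $\hat\alpha$ with matching entry and exit points $d$-close to those of $\alpha$, and the transition stretches between consecutive deep components get pinned together in $d$. (One may instead simply quote \cite[Proposition~8.13]{Hruska}, of which this is essentially the content.)

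The hard part is this last step. Because $\Cay(\Gamma,S)$ is not hyperbolic, the comparison must be organised component by component: hyperbolicity of $\hat\Gamma$ handles only the non-deep, transition portions, while BCP is what aligns the deep peripheral components of $\alpha$ and $\hat\alpha$ — and all of this must be done with constants uniform in $x,y,\gamma$ and in the choice of $\hat\alpha$. Carrying out the bookkeeping carefully, so that the constants from Lemma~\ref{lemmaliftgeodesic}, from stability in $\hat\Gamma$, and from BCP assemble into a single $r$, is where the genuine work lies.
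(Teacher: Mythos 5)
The paper does not actually prove this lemma: the statement is quoted verbatim from Hruska, cited as \cite[Proposition~8.13]{Hruska}, with no argument supplied. There is therefore no paper proof to compare against, and your sketch should be judged on its own merits as a reconstruction of Hruska's argument (which you explicitly acknowledge it to be).

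The overall architecture — lift $\hat\alpha$ to $\tilde{\hat\alpha}$, observe lifted vertices are transition points of $\tilde{\hat\alpha}$ and conversely that transition points of $\tilde{\hat\alpha}$ lie near lifted vertices, then transfer transition-point-ness from $\alpha$ to $\tilde{\hat\alpha}$ — is sound and is indeed the right decomposition. But your third step is both more complicated than it needs to be and, as written, has a real gap. You already know, from Lemma~\ref{lemmaliftgeodesic} and the Drutu--Sapir fact quoted right after it, that $\tilde{\hat\alpha}$ lies within a \emph{uniform bounded $d$-Hausdorff distance} of $\alpha$. That single piece of information makes the detour back through $\hat\Gamma$ and the BCP property unnecessary: if $p\in\tilde{\hat\alpha}$ is the point nearest $\gamma$ and $p$ were $(\eta_1',\eta_2')$-deep, then the $\eta_2'$-neighbourhood of $p$ on $\tilde{\hat\alpha}$ sits in the $\eta_1'$-neighbourhood of some coset, and pushing that across a $K$-Hausdorff correspondence shows the $(\eta_2'-2K)$-neighbourhood of $\gamma$ on $\alpha$ sits in the $(\eta_1'+K)$-neighbourhood of the same coset, making $\gamma$ deep. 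Choosing $(\eta_1,\eta_2)$ with $\eta_1\geq\eta_1'+K$ and $\eta_2\leq\eta_2'-2K$ therefore forces $p$ to be a transition point, and your converse claim finishes it. By contrast, in the route you actually propose, the final assertion that BCP causes ``the transition stretches between consecutive deep components'' of $\alpha$ and $\hat\alpha$ to be ``pinned together in $d$'' does not follow from BCP: BCP controls only entry and exit points of peripheral cosets, and it gives no direct handle on what happens between two pinned points. Making that step work requires an additional efficiency/distance-formula-type estimate on the transition stretches, which is exactly the content you flag as ``the genuine work'' — so as written the sketch stops short of a proof at that point.
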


\section{Preliminary results on the Green function}\label{Sectionpreliminaries}
In this section we give several general results that will be used along the paper.

\subsection{Combinatorial analysis of the Green derivatives}\label{SectionCombinatorialGreen}
Let $\Gamma$ be a finitely generated group.
Here, we do not need our group to be relatively hyperbolic and we do not make such an assumption.
Let $\mu$ be a probability measure on $\Gamma$ and let $R_{\mu}$ be the spectral radius of the corresponding random walk.
Again, we do not need to make assumptions such as the support to be finite or the associated random walk to be irreducible.
We will give formulae for the Green derivative 
$\frac{d}{dr}G(\gamma,\gamma'|r)$ and higher derivatives.
The first formula we get, given in the following lemma, was apparently first coined by S.~Gouëzel and S.~Lalley in \cite{GouezelLalley},
although it was already implicitly used before, for example by W.~Woess in \cite[\S~27.6]{Woess}.

\begin{lemma}\label{lemmafirstderivative}
For every $\gamma,\gamma'\in \Gamma$, for every $r\in [0,R_{\mu}]$, we have
$$\frac{d}{dr}(rG(\gamma_1,\gamma_2|r))=\sum_{\gamma\in \Gamma}G(\gamma_1,\gamma|r)G(\gamma,\gamma_2|r).$$
\end{lemma}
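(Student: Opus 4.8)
The plan is to work directly from the power series definition $G(\gamma_1,\gamma_2|r) = \sum_{n\geq 0} r^n \mu^{*n}(\gamma_1^{-1}\gamma_2)$ and differentiate term by term, then recognize the resulting coefficients as a convolution. First I would write
$$rG(\gamma_1,\gamma_2|r) = \sum_{n\geq 0} r^{n+1}\mu^{*n}(\gamma_1^{-1}\gamma_2) = \sum_{m\geq 1} r^{m}\mu^{*(m-1)}(\gamma_1^{-1}\gamma_2),$$
so that $\frac{d}{dr}(rG(\gamma_1,\gamma_2|r)) = \sum_{m\geq 1} m\, r^{m-1}\mu^{*(m-1)}(\gamma_1^{-1}\gamma_2) = \sum_{n\geq 0}(n+1)r^n \mu^{*n}(\gamma_1^{-1}\gamma_2)$. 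The goal is then to show that the right-hand side, namely $\sum_{\gamma\in\Gamma}G(\gamma_1,\gamma|r)G(\gamma,\gamma_2|r)$, has the same power series expansion in $r$.

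The key combinatorial step is the convolution identity $\sum_{\gamma\in\Gamma} \mu^{*k}(\gamma_1^{-1}\gamma)\mu^{*l}(\gamma^{-1}\gamma_2) = \mu^{*(k+l)}(\gamma_1^{-1}\gamma_2)$, which is just associativity of convolution (a substitution $\gamma = \gamma_1 g$ reduces it to $\sum_g \mu^{*k}(g)\mu^{*l}(g^{-1}\gamma_1^{-1}\gamma_2)$). Using this, I would expand
$$\sum_{\gamma\in\Gamma}G(\gamma_1,\gamma|r)G(\gamma,\gamma_2|r) = \sum_{\gamma\in\Gamma}\sum_{k\geq 0}\sum_{l\geq 0} r^{k+l}\mu^{*k}(\gamma_1^{-1}\gamma)\mu^{*l}(\gamma^{-1}\gamma_2) = \sum_{n\geq 0}r^n\sum_{k+l=n}\mu^{*n}(\gamma_1^{-1}\gamma_2),$$
and the inner sum has exactly $n+1$ equal terms, giving $(n+1)r^n\mu^{*n}(\gamma_1^{-1}\gamma_2)$, which matches the left-hand side coefficient by coefficient.

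The main obstacle is justifying the interchange of the sum over $\gamma\in\Gamma$ with the sums over $k,l$ (and the term-by-term differentiation), since $\Gamma$ may be infinite and we only assume $r\leq R_\mu$. For $r < R_\mu$ everything is a convergent power series with a positive radius of convergence, so differentiation term by term is legitimate and all the rearrangements involve nonnegative terms, hence are valid by Tonelli/Fubini for series. The delicate point is $r = R_\mu$ itself: here I would argue that all terms are nonnegative, so by the monotone convergence theorem (letting $r\uparrow R_\mu$) both sides are equal as elements of $[0,+\infty]$, and the derivative at $R_\mu$ is understood as the left derivative, which equals $\lim_{r\uparrow R_\mu}\frac{d}{dr}(rG(\gamma_1,\gamma_2|r))$ by monotonicity of the derivative of a power series with nonnegative coefficients. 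Thus the identity persists at $r=R_\mu$ even if both sides are infinite. This nonnegativity-plus-monotone-convergence device is the only subtlety; the algebra is otherwise routine.
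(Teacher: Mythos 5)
Your proof is correct and follows essentially the same route as the paper: differentiate the power series $rG(\gamma_1,\gamma_2|r)$ term by term, expand $\sum_\gamma G(\gamma_1,\gamma|r)G(\gamma,\gamma_2|r)$ via the Cauchy product, and collapse the inner sum using the convolution identity $\sum_{\gamma}\mu^{*k}(\gamma_1^{-1}\gamma)\mu^{*(n-k)}(\gamma^{-1}\gamma_2)=\mu^{*n}(\gamma_1^{-1}\gamma_2)$. You are slightly more explicit than the paper about justifying the interchange of summations and the $r=R_\mu$ endpoint via nonnegativity and monotone convergence, but the underlying argument is identical.
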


\begin{proof}
First, the formula makes sense at $R_{\mu}$, even if the sum diverges at $R_{\mu}$, since the coefficients of the power series $G(\cdot,\cdot|r)$ are non-negative.
We have by definition
$$rG(\gamma_1,\gamma_2|r)=\sum_{n\geq0}\mu^{*n}(\gamma_1^{-1}\gamma_2)r^{n+1},$$ so that
$$\frac{d}{dr}(rG(\gamma_1,\gamma_2|r))=\sum_{n\geq0}(n+1)\mu^{*n}(\gamma_1^{-1}\gamma_2)r^{n}.$$
The Cauchy formula for products of power series gives
\begin{align*}
    \sum_{\gamma\in \Gamma}G(\gamma_1,\gamma|r)G(\gamma,\gamma_2|r)&=\sum_{\gamma\in \Gamma}\sum_{n\geq 0}\left (\sum_{k=0}^n\mu^{*k}(\gamma_1^{-1}\gamma)\mu^{*(n-k)}(\gamma^{-1}\gamma_2)\right )r^n\\
    &=\sum_{n\geq0}\sum_{k=0}^n\left (\sum_{\gamma\in \Gamma}\mu^{*k}(\gamma_1^{-1}\gamma)\mu^{*(n-k)}(\gamma^{-1}\gamma_2)\right )r^n.
\end{align*}
For fixed $k$, decomposing a path of length $n$ from $\gamma_1$ to $\gamma_2$ according to its position at time $k$, we see that
$\sum_{\gamma\in \Gamma}\mu^{*k}(\gamma_1^{-1}\gamma)\mu^{*(n-k)}(\gamma^{-1}\gamma_2)=\mu^{*n}(\gamma_1^{-1}\gamma_2)$.
Thus,
$$\sum_{\gamma\in \Gamma}G(\gamma_1,\gamma|r)G(\gamma,\gamma_2|r)=\sum_{n\geq0}\sum_{k=0}^n\mu^{*n}(\gamma_1^{-1}\gamma_2)r^n=\sum_{n\geq0}(n+1)\mu^{*n}(\gamma_1^{-1}\gamma_2)r^n$$
which gives the desired formula.
\end{proof}

Using Lemma~\ref{lemmafirstderivative}, we have
$r^2\frac{d}{dr}(rG(\gamma_1,\gamma_2|r))=\sum_{\gamma\in \Gamma}(rG(\gamma_1,\gamma|r))(rG(\gamma,\gamma_2|r)),$
so that
\begin{align*}
    &\frac{d}{dr}\left (r^2\frac{d}{dr}(rG(\gamma_1,\gamma_2|r))\right )\\
    &=\sum_{\gamma\in \Gamma}\left ( \frac{d}{dr}(rG(\gamma_1,\gamma|r))rG(\gamma,\gamma_2|r)+rG(\gamma_1,\gamma|r)\frac{d}{dr}(rG(\gamma,\gamma_2|r))\right )\\
    &=r\sum_{\gamma\in \Gamma}\sum_{\gamma'\in \Gamma}G(\gamma_1,\gamma'|r)G(\gamma',\gamma|r)G(\gamma,\gamma_2|r)\\
    &\hspace{1cm}+r\sum_{\gamma\in\Gamma}\sum_{\gamma'\in \Gamma}G(\gamma_1,\gamma|r)G(\gamma,\gamma'|r)G(\gamma',\gamma_2|r)\\
    &=2r\sum_{\gamma,\gamma'\in \Gamma}G(\gamma_1,\gamma|r)G(\gamma,\gamma'|r)G(\gamma',\gamma_2|r).
\end{align*}

More generally, we define
$$I^{(k)}(r)=\sum_{\gamma^{(1)},...,\gamma^{(k)}\in \Gamma}G(\gamma,\gamma^{(1)}|r)G(\gamma^{(1)},\gamma^{(2)}|r)...G(\gamma^{(k-1)},\gamma^{(k)}|r)G(\gamma^{(k)},\gamma'|r).$$
We also inductively define
$$F_1(r)=\frac{d}{dr}(rG_r(\gamma,\gamma'))$$
and
$$F_k(r)=\frac{d}{dr}(r^2F_{k-1}(r)), k\geq 2.$$
We do not refer to $\gamma$ and $\gamma'$ in the notations, but $F_k(r)$ and $I^{(k)}(r)$ do depend on them.
According to the formula above, we have
$$F_2(r)=\frac{d}{dr}\left (r^2\frac{d}{dr}(rG(\gamma,\gamma'|r))\right )=2rI^{(2)}(r).$$
More generally, we have the following result.

\begin{lemma}\label{lemmageneralformuladerivatives}
For every $\gamma,\gamma'\in \Gamma$, for every $r\in [0,R_{\mu}]$,
$F_k(r)=k!r^{k-1}I^{(k)}(r)$.
\end{lemma}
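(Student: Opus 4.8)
The plan is to argue by induction on $k$. The base case $k=1$ is exactly Lemma~\ref{lemmafirstderivative}: it gives $F_1(r)=\frac{d}{dr}(rG(\gamma,\gamma'|r))=\sum_{\gamma^{(1)}}G(\gamma,\gamma^{(1)}|r)G(\gamma^{(1)},\gamma'|r)=I^{(1)}(r)=1!\,r^{0}I^{(1)}(r)$. For the inductive step, assume $F_{k-1}(r)=(k-1)!\,r^{k-2}I^{(k-1)}(r)$. Then by the defining recursion $F_k(r)=\frac{d}{dr}(r^2F_{k-1}(r))=(k-1)!\,\frac{d}{dr}\bigl(r^{k}I^{(k-1)}(r)\bigr)$, so it suffices to prove the identity $\frac{d}{dr}\bigl(r^{k}I^{(k-1)}(r)\bigr)=k\,r^{k-1}I^{(k)}(r)$.

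First I would establish this on the interval $[0,R_{\mu})$, where every series in sight (Green functions, $I^{(k)}$, $F_k$) is a power series in $r$ with non-negative coefficients converging absolutely, so that term-by-term differentiation and interchanges of sums are all legitimate. Since $I^{(k-1)}(r)$ is a sum over $k-1$ intermediate points of a product of exactly $k$ Green functions, I would distribute the factor $r^k$ as one $r$ per Green function:
$$r^{k}I^{(k-1)}(r)=\sum_{\gamma^{(1)},\ldots,\gamma^{(k-1)}\in\Gamma}\ \prod_{j=1}^{k}\bigl(rG(\gamma^{(j-1)},\gamma^{(j)}|r)\bigr),$$
with the conventions $\gamma^{(0)}=\gamma$ and $\gamma^{(k)}=\gamma'$. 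Differentiating and applying the product rule produces $k$ terms, the $j$-th obtained by differentiating the $j$-th factor; Lemma~\ref{lemmafirstderivative} turns that factor into $\frac{d}{dr}(rG(\gamma^{(j-1)},\gamma^{(j)}|r))=\sum_{c\in\Gamma}G(\gamma^{(j-1)},c|r)G(c,\gamma^{(j)}|r)$, i.e.\ it splits the $j$-th edge of the path by inserting a new summation variable $c$, while the remaining $k-1$ factors each still carry exactly one $r$. Relabelling the $k-1$ old intermediate points together with the new point $c$ in their order along the path as $\delta^{(1)},\ldots,\delta^{(k)}$ identifies each of these $k$ terms with $r^{k-1}I^{(k)}(r)$. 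Summing over $j$ gives $\frac{d}{dr}(r^{k}I^{(k-1)}(r))=k\,r^{k-1}I^{(k)}(r)$, hence $F_k(r)=(k-1)!\cdot k\cdot r^{k-1}I^{(k)}(r)=k!\,r^{k-1}I^{(k)}(r)$ on $[0,R_{\mu})$.

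Finally I would extend the identity to $r=R_{\mu}$. As in the proof of Lemma~\ref{lemmafirstderivative}, both sides are power series in $r$ with non-negative coefficients (an immediate induction shows this for $F_k$, and the computation above exhibits $r^{k-1}I^{(k)}$ as such a series as well), so both are left-continuous on $[0,R_{\mu}]$ with values in $[0,+\infty]$ by monotone convergence; agreement on $[0,R_{\mu})$ therefore forces agreement at $R_{\mu}$, with the usual convention that the identity may read $+\infty=+\infty$ there. I do not anticipate a genuine obstacle: the only delicate points are the bookkeeping of the relabelling that identifies each of the $k$ summands with $I^{(k)}$, and the routine justification of differentiating under the sums, which is harmless for series with non-negative coefficients.
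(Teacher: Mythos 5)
Your proof is correct and uses essentially the same argument as the paper: induct on $k$, distribute one factor of $r$ to each Green function in $I^{(k-1)}$, differentiate factor-by-factor via Lemma~\ref{lemmafirstderivative}, and observe that each of the $k$ resulting terms is $r^{k-1}I^{(k)}(r)$ after relabelling. The only cosmetic differences are that you start the induction at $k=1$ rather than $k=2$ and add an explicit (harmless) justification for term-by-term differentiation and the extension to $r=R_\mu$, which the paper leaves implicit.
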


\begin{proof}
We prove this by induction.
The formula is true for $k=2$, as stated above.
Assume it is true for $k$. Then,
$$F_{k+1}(r)=\frac{d}{dr}\left (r^2F_k(r)\right )=\frac{d}{dr}\left (k!r^{k+1}I^{(k)}(r)\right ),$$
so that
$$\frac{F_{k+1}(r)}{k!}=\frac{d}{dr}\sum_{\gamma^{(1)},...,\gamma^{(k)}\in\Gamma}(rG(\gamma,\gamma^{(1)}|r))...(rG(\gamma^{(k-1)},\gamma^{(k)}|r))(rG(\gamma^{(k)},\gamma'|r)).$$
Differentiating every $rG(\gamma^{(i)},\gamma^{(i+1)}|r)$ using Lemma~\ref{lemmafirstderivative}, we get, as for the case $k=2$,
$$\frac{F_{k+1}(r)}{k!}=(k+1)\sum_{\gamma^{(1)},...\gamma^{(k+1)}\in \Gamma}r^kG(\gamma,\gamma^{(1)}|r)...G(\gamma^{(k)},\gamma^{(k+1)}|r)G(\gamma^{(k+1)},\gamma'|r),$$
hence $F_{k+1}(r)=(k+1)!r^kI^{(k+1)}(r)$.
\end{proof}

We will use the following proposition later.
We denote by $G^{(k)}(\gamma,\gamma'|r)$ the $k$th derivative $\frac{d^k}{dr^k}G(\gamma,\gamma'|r)$ of the Green function at $r$ and simply by $G'(\gamma,\gamma'|r)$ its first derivative $\frac{d}{dr}G(\gamma,\gamma'|r)$.
\begin{proposition}\label{propderivativesGreensubexponential}
For every $\gamma,\gamma'\in \Gamma$ and every $r\in [1,R_{\mu}]$, $I^{(k)}(r)$ grows at most exponentially if and only if $\frac{G^{(k)}(\gamma,\gamma'|r)}{k!}$ does.
Precisely, there exist $c_1\geq0$ and $C_1>1$ such that $\frac{G^{(k)}(\gamma,\gamma'|r)}{k!}\leq c_1C_1^k$ for all $k$ if and only if there exist $c_2\geq0$ and $C_2>1$ such that $I^{(k)}(r)\leq c_2C_2^k$ for all $k$.
\end{proposition}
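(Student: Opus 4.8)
The plan is to relate the two families of coefficients via the relations $F_k(r) = k! r^{k-1} I^{(k)}(r)$ (Lemma~\ref{lemmageneralformuladerivatives}) and $F_k(r) = \frac{d}{dr}(r^2 F_{k-1}(r))$, keeping in mind that $r$ is fixed in $[1,R_\mu]$, so all factors $r^j$ are bounded above and below by constants depending only on $R_\mu$. The subtlety is that $F_k(r)$ is \emph{not} simply a multiple of $G^{(k)}(\gamma,\gamma'|r)$: unravelling the recursion $F_k = \frac{d}{dr}(r^2 F_{k-1})$ starting from $F_1 = \frac{d}{dr}(rG)$, one sees that $F_k(r)$ is a finite linear combination, with nonnegative coefficients depending only on $k$, of terms $r^j G^{(i)}(\gamma,\gamma'|r)$ with $0 \le i \le k$ and bounded exponents $j \le k$. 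Conversely, $G^{(k)}(\gamma,\gamma'|r)$ can be recovered from $F_1,\dots,F_k$ (again with $k$-dependent but \emph{at most exponential} coefficients, since each differentiation step only introduces polynomial-in-$k$ factors and powers of $1/r$). So the two growth conditions are equivalent provided the combinatorial coefficients in both directions grow at most exponentially in $k$.

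First I would make the forward direction precise: prove by induction that $F_k(r) = \sum_{i=0}^{k} a_{k,i}(r) G^{(i)}(\gamma,\gamma'|r)$ where each $a_{k,i}(r)$ is a polynomial in $r$ of degree at most $k$ with nonnegative coefficients, and $\sum_i \sup_{r\in[1,R_\mu]} |a_{k,i}(r)|$ is bounded by $C^k$ for some constant $C = C(R_\mu)$. This follows by differentiating the expression for $F_{k-1}$ term by term and using that $\frac{d}{dr}(r^2 \cdot r^j G^{(i)}) = (j+2) r^{j+1} G^{(i)} + r^{j+2} G^{(i+1)}$; the coefficient bookkeeping gives a crude bound like $a_{k} \le (k+2)^2 a_{k-1}$ in total mass, hence at most $(k!)^2$-type growth — which is worse than exponential. \textbf{This is the main obstacle:} a naive induction produces factorially-growing coefficients, not exponential ones, so I cannot directly conclude. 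The resolution is to not expand $F_k$ in terms of $G^{(i)}$ at all, but instead to directly compare $I^{(k)}(r)$ with $G^{(k)}(\gamma,\gamma'|r)/k!$ using a generating-function / Cauchy-product argument.

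Here is the cleaner route I would actually follow. Write $g(r) = rG(\gamma,\gamma'|r)$ and observe that, by Lemma~\ref{lemmafirstderivative} applied $k$ times and the definition of $I^{(k)}$, the quantity $r^{k-1} I^{(k)}(r)$ equals the $k$-th coefficient-type object obtained by repeatedly applying the operator $T\colon h \mapsto \frac{d}{dr}(r^2 h)$ — equivalently, $k! r^{k-1} I^{(k)}(r)$ is the $n$-th power-series manipulation of $g$. Concretely, since $G(\gamma,\gamma'|r) = \sum_n c_n r^n$ with $c_n = \mu^{*n}(\gamma^{-1}\gamma') \ge 0$, one checks that $I^{(k)}(r) = \sum_n \binom{n+k}{k} c_n r^n$ (this is exactly the statement that convolving $k$ extra factors of $G$ multiplies the $n$-th coefficient by $\binom{n+k}{k}$, which is the content of the Cauchy-product computation already done in the proof of Lemma~\ref{lemmafirstderivative}). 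On the other hand $\frac{G^{(k)}(\gamma,\gamma'|r)}{k!} = \sum_n \binom{n}{k} c_n r^n$. Since $\binom{n}{k} \le \binom{n+k}{k} \le 2^{n+k} \binom{n}{k}$... more carefully, for $r \ge 1$ and using $c_n \ge 0$, I have the two-sided comparison
\[
\frac{G^{(k)}(\gamma,\gamma'|r)}{k!} \le I^{(k)}(r), \qquad I^{(k)}(r) = \sum_n \binom{n+k}{k} c_n r^n,
\]
and $\binom{n+k}{k} \le 2^{n+k}$, so $I^{(k)}(r) \le 2^k \sum_n 2^n c_n r^n = 2^k \, \frac{1}{r}\,(rG)(\gamma,\gamma'|2r)$ — which is finite and at most exponential in $k$ precisely when $2r \le R_\mu$. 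Since we only control $r$ up to $R_\mu$ this last bound is not uniform, so instead I would use $\binom{n+k}{k} \le \binom{n}{k} + \binom{n+k}{k}\mathbf{1}_{n < k}$ split...

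Let me restate the final clean argument rather than iterate: I would prove $\frac{G^{(k)}(\gamma,\gamma'|r)}{k!} \le I^{(k)}(r)$ directly from $\binom{n}{k}\le\binom{n+k}{k}$, giving one implication immediately with $c_2 = c_1$, $C_2 = C_1$. For the reverse, split $I^{(k)}(r) = \sum_{n < k}\binom{n+k}{k}c_n r^n + \sum_{n\ge k}\binom{n+k}{k}c_n r^n$; the first sum has at most $k$ terms, each bounded by $4^k \cdot c_n r^n \le 4^k G(\gamma,\gamma'|r) \cdot r^k$ (a fixed exponential), and for the second sum use $\binom{n+k}{k} \le \binom{2n}{n} \le 4^n$ together with $\binom{n}{k} \ge (n/k)^k \ge$ ... one wants $\binom{n+k}{k}/\binom{n}{k}$ bounded exponentially in $k$ uniformly in $n \ge k$, and indeed $\binom{n+k}{k}/\binom{n}{k} = \prod_{j=1}^{k}\frac{n+j}{n-k+j} \le \prod_{j=1}^k \frac{n+k}{n} \le (1 + k/n)^k \le 2^k$ when $n \ge k$. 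Hence $I^{(k)}(r) \le k\cdot 4^k r^k G(\gamma,\gamma'|r) + 2^k \cdot \frac{G^{(k)}(\gamma,\gamma'|r)}{k!}$, and since $G(\gamma,\gamma'|r) < \infty$ for $r \le R_\mu$ (with the coefficient-sum interpretation if $r = R_\mu$), an exponential bound on $G^{(k)}/k!$ gives one on $I^{(k)}$ and vice versa. I would organize the write-up as: (i) establish the coefficient identities $I^{(k)}(r) = \sum_n \binom{n+k}{k} c_n r^n$ and $G^{(k)}/k! = \sum_n \binom{n}{k} c_n r^n$; (ii) prove the elementary binomial comparison $\binom{n}{k} \le \binom{n+k}{k} \le 2^k \binom{n}{k} + 4^k \mathbf{1}_{n<k}$ for all $n,k$; (iii) combine, using $r \in [1,R_\mu]$ to bound powers of $r$ and finiteness of $G(\gamma,\gamma'|r)$. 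The only real obstacle, handled by step (ii), is getting the binomial ratio bounded \emph{exponentially} in $k$ rather than factorially, which fails if one expands $F_k$ in Green derivatives but succeeds with the direct generating-function comparison.
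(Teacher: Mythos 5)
Your approach is correct and genuinely different from the paper's. The paper establishes $F_k(r)=\sum_{j=0}^{k}f_{j,k}\,r^{k+j-1}G^{(j)}_r$ and then proves by an inductive analysis of the recursion $f_{j,k+1}=f_{j-1,k}+(k+j+1)f_{j,k}$ that $f_{j,k}\frac{j!}{k!}\leq cC^{j+k}$; the easy direction falls out from $f_{k,k}=1$, the hard direction from substituting the exponential bound on $G^{(j)}_r/j!$ and summing a geometric series. You bypass the $f_{j,k}$ recursion entirely by working directly at the level of power-series coefficients: writing $c_n=\mu^{*n}(\gamma^{-1}\gamma')$, the Chapman--Kolmogorov identity (the same Cauchy-product computation as in Lemma~\ref{lemmafirstderivative}, iterated $k$ times) yields $I^{(k)}(r)=\sum_n\binom{n+k}{k}c_n r^n$, while $\frac{G^{(k)}(\gamma,\gamma'|r)}{k!}=\sum_n\binom{n}{k}c_n r^{n-k}$, and the whole proposition reduces to the elementary binomial inequality $\binom{n}{k}\leq\binom{n+k}{k}\leq 4^k\bigl(\mathbf{1}_{n<k}+\binom{n}{k}\bigr)$ together with $1\leq r\leq R_\mu$. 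This is more transparent than the paper's argument and makes the precise exponential constant visible.

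Two small corrections are needed in your execution. First, $\frac{G^{(k)}(\gamma,\gamma'|r)}{k!}=\sum_n\binom{n}{k}c_n r^{n-k}$, not $\sum_n\binom{n}{k}c_n r^{n}$; for $r\in[1,R_\mu]$ the discrepancy is a factor $r^{\pm k}$, which only changes the exponential base and so does not affect the conclusion, but the identity as stated is off. Second, your bound $\frac{n+j}{n-k+j}\leq\frac{n+k}{n}$ for $1\leq j\leq k$ actually goes the wrong way (cross-multiplying gives $(n+j)n-(n+k)(n-k+j)=k(k-j)\geq 0$), so $\prod_{j}\frac{n+j}{n-k+j}\leq(1+k/n)^k\leq 2^k$ is not justified as written. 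The clean replacement: the ratio $\rho(n)=\binom{n+k}{k}/\binom{n}{k}=\frac{(n+k)!\,(n-k)!}{(n!)^2}$ satisfies $\frac{\rho(n+1)}{\rho(n)}=\frac{(n+1)^2-k^2}{(n+1)^2}<1$, so $\rho$ is decreasing on $n\geq k$ and $\rho(n)\leq\rho(k)=\binom{2k}{k}\leq 4^k$. With these two patches the argument is complete and correct, and it recovers the statement in a more elementary way than the paper.
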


\begin{itemize}
    \item Note that the constants $c_i$ and $C_i$ are allowed to depend on $r$.
    \item The only reason for requiring $C_i>1$ and $r\geq1$ is technical. The goal is to have divergent geometric series below.
    In any case, if one of the upper bound is true for $C_i\geq0$, it is in particular true for some $C'_i>1$.
    \item Of course, $\frac{G^{(k)}(\gamma,\gamma'|R_{\mu})}{k!}\leq c_1C_1^k$ cannot happen, since $R_{\mu}$ is the radius of convergence of the Green function.
    However, the statement still remains valid at $r=R_{\mu}$ and we will use it in a proof by contradiction later.
\end{itemize}

\begin{proof}
Let us first differentiate $rG(\gamma,\gamma'|r)$.
For simplicity, we will write $G_r$ rather than $G(\gamma,\gamma'|r)$, and similarly for derivatives.
We have
$\frac{d}{dr}(rG_r)=G_r+rG'_r$.
Differentiating $r^2\frac{d}{dr}(rG_r)$, we thus get
$$F_2(r)=\frac{d}{dr}\left (r^2\frac{d}{dr}(rG_r)\right )=\frac{d}{dr}(r^2(G_r+rG'_r))=2rG_r+4r^2G'_r+r^3G^{(2)}_r.$$
Let us compute the next $F_k$ before giving a general formula.
We have
$$F_3(r)=\frac{d}{dr}(r^2(2rG_r+4r^2G'_r+r^3G^{(2)}_r))=6r^2G_r+18r^3G'_r+9r^4G^{(2)}_r+r^5G^{(3)}_r.$$
More generally, we have
\begin{equation}\label{generalequationFk}
    F_k(r)=\sum_{j=0}^kf_{j,k}r^{k+j-1}G^{(j)}_r,
\end{equation}
where the coefficients $f_{j,k}$ satisfy
\begin{equation}\label{equationfjk}
    f_{j,k+1}=f_{j-1,k}+(k+j+1)f_{j,k}, 1\leq j\leq k,
\end{equation}
since we first multiply $F_k(r)$ by $r^2$ before differentiating to get $F_{k+1}(r)$.
We also have $f_{0,k}=k!$ and $f_{k,k}=1$.

This shows that $r^{2k-1}G^{(k)}_r\leq F_k(r)=k!r^{k-1}I^{(k)}(r)$ according to Lemma~\ref{lemmageneralformuladerivatives}.
Thus, we have $\frac{G^{(k)}_r}{k!}\leq \frac{1}{r^k}I^{(k)}(r)$.
This proves the "if" part of the proposition.

For the "only if" part, we will have to be precise about the coefficients $f_{j,k}$.
We first prove by induction on $k$ that there exist $c\geq1$ and $C\geq1$ such that
\begin{equation}\label{subexponentialfjk}
    f_{j,k}\frac{j!}{k!}\leq cC^{j+k}.
\end{equation}
Indeed, assuming this is true for $k$, according to~(\ref{equationfjk}), we have for $j\leq k$
\begin{align*}
    f_{j,k+1}&\leq cC^{j-1+k}\frac{k!}{(j-1)!}+cC^{j+k}(k+j+1)\frac{k!}{j!}\\
    &\leq cC^{j-1+k}\frac{k!}{(j-1)!}+cC^{j+k}\frac{k+j+1}{k+1}\frac{(k+1)!}{j!}.
\end{align*}
Since $j\leq k$, we always have
$\frac{k!}{(j-1)!}\leq \frac{(k+1)!}{j!}$ and $(k+j+1)\leq 2(k+1)$.
Thus,
$$f_{j,k+1}\leq cC^{j-1+k}\frac{(k+1)!}{j!}+2cC^{j+k}\frac{(k+1)!}{j!}.$$
To conclude, it suffices to choose $c$ and $C$ such that $cC^{j-1+k}+2cC^{j+k}\leq cC^{j+k+1}$, i.e.\ $1+2C\leq C^2$, which is always possible.

Now, assuming that $\frac{G^{(k)}_r(x,y)}{k!}\leq c_1C_1^k$ for all $k$ and using Lemma~\ref{lemmageneralformuladerivatives} along with~(\ref{generalequationFk}) and~(\ref{subexponentialfjk}), we get
\begin{align*}
    r^{k-1}I^{(k)}(r)&\leq \frac{1}{k!}F_k(r)=\frac{1}{k!}\sum_{j=0}^kf_{j,k}r^{k+j-1}G^{(j)}_r
    \leq \frac{1}{k!}r^{k-1}\sum_{j=0}^kf_{j,k}r^jc_1C_1^jj!\\
    &\leq cc_1r^{k-1}C^k\sum_{j=0}^k(rCC_1)^j
    \leq r^{k-1}\frac{cc_1rCC_1}{rCC_1-1}(rC^2C_1)^k.
\end{align*}
This completes the proof.
\end{proof}

\begin{remark}
In \cite{Gouezel1} and \cite{GouezelLalley}, the authors use the incorrect formula
$$G(\gamma,\gamma'|r+\epsilon)=\sum_{k\geq0}\epsilon^k\sum_{\gamma^{(1)},...,\gamma^{(k)}\in \Gamma}G(\gamma,\gamma^{(1)}|r)...G(\gamma^{(k-1)},\gamma^{(k)}|r)G(\gamma^{(k)},\gamma'|r).$$
If this were true, this would imply that
$$\frac{d^k}{dr^k}G(\gamma,\gamma'|r)=\sum_{\gamma^{(1)},...,\gamma^{(k)}\in \Gamma}G(\gamma,\gamma^{(1)}|r)...G(\gamma^{(k-1)},\gamma^{(k)}|r)G(\gamma^{(k)},\gamma'|r)=I^{(k)}(r),$$
whereas Lemma~\ref{lemmageneralformuladerivatives} shows that the formula is a little bit more involved.
This is actually not a problem, since the authors only use the fact that $I^{(k)}$ grows at most exponentially to deduce that $\frac{G^{(k)}_r}{k!}$ grows at most exponentially, which is legitimate according to Proposition~\ref{propderivativesGreensubexponential}.
\end{remark}

\subsection{Spectral degenerescence and spectral positive-recurrence}\label{Sectionspectralpositiverecurrence}
We now consider a group $\Gamma$, hyperbolic relative to a collection of peripheral subgroups $\Omega$ and we fix a finite collection $\Omega_0=\{\mathcal{H}_1,...,\mathcal{H}_N\}$ of representatives of conjugacy classes of $\Omega$.
We assume that $\Gamma$ is non-elementary.
Let $\mu$ be a probability measure on $\Gamma$, $R_{\mu}$ the spectral radius of the $\mu$-random walk and $G(\gamma,\gamma'|r)$ the associated Green function, evaluated at $r$, for $r\in [0,R_{\mu}]$.
If $\gamma=\gamma'$, we simply use the notation $G(r)=G(\gamma,\gamma|r)=G(e,e|r)$.

\medskip
We denote by $p_k$ the first return transition kernel to $\mathcal{H}_k$.
Namely, if $h,h'\in \mathcal{H}_k$, then
$p_k(h,h')$ is the probability that the $\mu$-random walk, starting at $h$, eventually comes back to $\mathcal{H}_k$ and that its first return to $\mathcal{H}_k$ is at $h'$.
In other words,
\begin{align*}
    p_k(h,h')&=\mathbb{P}_h(\exists n\geq 1, X_n=h',X_1,...,X_{n-1}\notin \mathcal{H}_k)\\
    &=\sum_{n\geq 1}\sum_{\underset{\notin \mathcal{H}_k}{\gamma_1,...,\gamma_{n-1}}}\mu(h^{-1}\gamma_1)\mu(\gamma_1^{-1}\gamma_2)...\mu(\gamma_{n-2}^{-1}\gamma_{n-1})\mu(\gamma_{n-1}^{-1}h').
\end{align*}
More generally, for $r\in [0,R_{\mu}]$, we denote by $p_{k,r}$ the first return transition kernel to $\mathcal{H}_k$ for $r\mu$.
Precisely, if $h,h'\in \mathcal{H}_k$, then
$$p_{k,r}(h,h')=\sum_{n\geq 1}\sum_{\underset{\notin \mathcal{H}_k}{\gamma_1,...,\gamma_{n-1}}}r^n\mu(h^{-1}\gamma_1)\mu(\gamma_1^{-1}\gamma_2)...\mu(\gamma_{n-2}^{-1}\gamma_{n-1})\mu(\gamma_{n-1}^{-1}h').$$

We then denote by $p_{k,r}^{(n)}$ the convolution powers of this transition kernel, by $G_{k,r}(h,h'|t)$ the associated Green function, evaluated at $t$ and by $R_k(r)$ the associated spectral radius, that is, the radius of convergence of $t\mapsto G_{k,r}(h,h'|t)$.
For simplicity, write $R_k=R_k(R_{\mu})$.
As for the initial Green function, if $h=h'$, we will simply use the notation $G_{k,r}(t)=G_{k,r}(h,h|t)=G_{k,r}(e,e|t)$.
We first show the following lemma.
\begin{lemma}\label{lemmasameGreen}
Let $r\in [0,R_{\mu}]$. For any $k\in \{1,...,N\}$,
$$G_{k,r}(h,h'|1)=G(h,h'|r).$$
\end{lemma}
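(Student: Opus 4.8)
The plan is to decompose paths for the $r$-random walk according to their visits to the subgroup $\mathcal{H}_k$, exactly as one proves that the Green function of a Markov chain restricted to a subset equals a composition of first-return and excursion pieces. The key identity is a "last-exit" or "decomposition at visits to $\mathcal{H}_k$" formula: any path from $h$ to $h'$ (both in $\mathcal{H}_k$) weighted by $r^{(\text{length})}\mu^{*(\cdot)}$ can be split canonically into an initial excursion that stays in $\mathcal{H}_k$-free territory until its first return to $\mathcal{H}_k$, followed by another such excursion, and so on, ending exactly at $h'$. Grouping paths by the number $n$ of times they hit $\mathcal{H}_k$ (counting $h$ and $h'$), the total weight of paths hitting $\mathcal{H}_k$ exactly $n+1$ times is precisely $p_{k,r}^{(n)}(h,h')$ by definition of the first-return kernel $p_{k,r}$ and its convolution powers. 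Summing over $n\geq 0$ gives $\sum_{n\geq 0}p_{k,r}^{(n)}(h,h')=G_{k,r}(h,h'|1)$ on one side, and $\sum_{n\geq 0}r^n\mu^{*n}(h^{-1}h')=G(h,h'|r)$ on the other.

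First I would fix $r\in[0,R_\mu]$, fix $k$ and $h,h'\in\mathcal{H}_k$, and write out $G(h,h'|r)=\sum_{m\geq 0}r^m\mu^{*m}(h^{-1}h')$ as a sum over all paths (sequences of group elements $h=x_0,x_1,\dots,x_m=h'$) with weight $r^m\prod_{i}\mu(x_{i-1}^{-1}x_i)$. Then I would partition this set of paths according to the (finite, possibly empty) list of indices $0=i_0<i_1<\dots<i_n=m$ at which the path lies in $\mathcal{H}_k$ — this is well defined since $x_0,x_m\in\mathcal{H}_k$. Between consecutive hits $i_{j-1}$ and $i_j$ the path stays outside $\mathcal{H}_k$ at all intermediate times, so the weight of that block is exactly a term in the defining series for $p_{k,r}(x_{i_{j-1}},x_{i_j})$. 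Multiplying the $n$ blocks and summing over all intermediate vertices and all block lengths factors the sum as $\sum_{n\geq 0}\sum_{h=y_0,\dots,y_n=h'\in\mathcal{H}_k}\prod_{j=1}^n p_{k,r}(y_{j-1},y_j)$, which is $\sum_{n\geq 0}p_{k,r}^{(n)}(h,h')=G_{k,r}(h,h'|1)$.

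I would justify the rearrangement of the (possibly infinite, but non-negative) sums by noting that all terms are non-negative: Tonelli/Fubini for series with non-negative terms permits reordering without any convergence hypothesis, and the identity then holds as an equality in $[0,+\infty]$. This is also consistent with the convention, already used in the excerpt for the formulae in Section~\ref{SectionCombinatorialGreen}, that these identities make sense even when both sides are infinite. I would also remark that the $n=0$ block is handled by the convention $p_{k,r}^{(0)}(h,h')=\mathbf{1}_{h=h'}$, matching the $m=0$ term $\mu^{*0}(h^{-1}h')=\mathbf{1}_{h=h'}$.

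**The main obstacle** I anticipate is purely bookkeeping: making the "partition by visits to $\mathcal{H}_k$" rigorous as a bijection between the set of $\mu$-paths from $h$ to $h'$ and the disjoint union over $n$ of ($n$-tuples of $\mathcal{H}_k$-free-interior excursions), and checking that the weight factors correctly as a product under this bijection. There is no analytic difficulty — no estimates, no appeal to relative hyperbolicity, and the $r$-dependence is entirely carried along inside $p_{k,r}$ — so the content is the combinatorial identity plus the non-negativity remark that licenses all the rearrangements. One small point to state cleanly is that "first return to $\mathcal{H}_k$" for the block starting at $y_{j-1}$ does produce a term of $p_{k,r}(y_{j-1},y_j)$ precisely because the block's interior vertices avoid $\mathcal{H}_k$; this is immediate from the definition of $p_{k,r}$ recalled just above the lemma.
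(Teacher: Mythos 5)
Your proof is correct and follows essentially the same route as the paper's: the paper's (very terse) argument is precisely this decomposition of an $r\mu$-trajectory by its visits to $\mathcal{H}_k$, producing a $p_{k,r}$-trajectory and vice versa. You have merely spelled out the bookkeeping (grouping by the set of hitting times, recognizing each excursion as a term of $p_{k,r}$, and invoking Tonelli for the non-negative rearrangement) that the paper leaves implicit.
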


\begin{proof}
This follows from the fact that every trajectory from $h$ to $h'$ for $r\mu$ defines a trajectory from $h$ to $h'$ for $p_{k,r}$, excluding every point of the path that is not in $\mathcal{H}_k$
and every trajectory for $p_{k,r}$ is obtained in such a way.
Summing over all trajectories, the two Green functions coincide.
\end{proof}

Since $\Gamma$ is non-elementary, it contains a free group and hence is non-amenable.
It follows from a result of Guivarc'h (see \cite[p.~85, remark~b)]{Guivarch}) that $G(R_{\mu})<+\infty$ (see also \cite[Theorem~7.8]{Woess} for a stronger statement).
Thus, $G_{k,R_{\mu}}(1)<+\infty$.
In particular,
\begin{equation}\label{parabolicradiusgeq1}
   \forall k\in \{1,...,N\}, R_k\geq 1.
\end{equation}

\begin{definition}
We say that $\mu$ (or equivalently the random walk) is spectrally degenerate along $\mathcal{H}_k$ if $R_k=1$.
We say it is non-spectrally degenerate if for every $k$, $R_k>1$.
\end{definition}

This definition was introduced in \cite{DussauleGekhtman} to study the homeomorphism type of the Martin boundary at the spectral radius.
This homeomorphism type can change whether $\mu$ is spectrally degenerate or not along a parabolic subgroup.
Also, this definition does not depend on the choice of $\Omega_0$, in the sense that if $\mathcal{H}_k'$ is conjugate to $\mathcal{H}_k$ and if $\mathcal{H}_{k}$ is replaced with $\mathcal{H}_k'$ in $\Omega_0$, then $\mu$ is spectrally degenerate along $\mathcal{H}_0$ if and only if it is spectrally degenerate along $\mathcal{H}_k'$, see \cite[Lemma~2.2]{DussauleGekhtman}.

\medskip
Recall that for a parabolic subgroup $\mathcal{H}$,
$$I_\mathcal{H}^{(2)}(r)=\sum_{h,h'\in \mathcal{H}}G(e,h|r)G(h,h'|r)G(h',e|r).$$
For simplicity, once $\Omega_0$ is fixed, for every parabolic subgroup $\mathcal{H}_k\in \Omega_0$ we set $I^{(2)}_k=I^{(2)}_{\mathcal{H}_k}$.
In view of Lemma~\ref{lemmageneralformuladerivatives}, $I^{(2)}_k$ is finite if and only if $\frac{d^2}{dt^2}_{|t=1}\left (t\mapsto G_{k,R_\mu}(t)\right )$ is finite,
which obviously holds if $R_k>1$.
In particular, we have the following result.

\begin{proposition}\label{propspectraldegenerescenceimpliesfiniteGreenmoments}
With the same notations, if $\mu$ is non-spectrally degenerate, then it has finite Green moments.
\end{proposition}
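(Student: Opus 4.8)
The plan is to apply the combinatorial analysis of Section~\ref{SectionCombinatorialGreen} not to $\mu$ but to the first return kernel $p_{k,R_\mu}$ on each peripheral subgroup $\mathcal{H}_k$, and then to deduce finiteness of $I^{(2)}_k$ from the fact that $t=1$ lies strictly inside the disk of convergence of $t\mapsto G_{k,R_\mu}(e,e|t)$ precisely when $R_k>1$.

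First I would observe that $p_{k,R_\mu}$ is invariant under left translation by $\mathcal{H}_k$ — this is immediate from its defining formula — so it is the convolution operator associated with the (sub-probability) measure $\nu_k$ on $\mathcal{H}_k$ given by $\nu_k(h)=p_{k,R_\mu}(e,h)$. The proofs of Lemma~\ref{lemmafirstderivative} and Lemma~\ref{lemmageneralformuladerivatives} only use the Cauchy product formula for power series, so they apply verbatim with $\Gamma$ replaced by $\mathcal{H}_k$, $\mu$ by $\nu_k$, and the formal variable renamed $t$; the associated Green function is $G_{k,R_\mu}(h,h'|t)$ and its radius of convergence is $R_k$. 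In particular, setting
\[
J_k(t)=\sum_{h,h'\in\mathcal{H}_k}G_{k,R_\mu}(e,h|t)\,G_{k,R_\mu}(h,h'|t)\,G_{k,R_\mu}(h',e|t),
\]
Lemma~\ref{lemmageneralformuladerivatives} in the case $k=2$, together with the expansion of $F_2$ computed in the proof of Proposition~\ref{propderivativesGreensubexponential}, gives
\[
2t\,J_k(t)=2t\,G_{k,R_\mu}(t)+4t^2\,G'_{k,R_\mu}(t)+t^3\,G^{(2)}_{k,R_\mu}(t),
\]
an identity valid for $t\in[0,R_k)$.

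Next I would evaluate at $t=1$. By Lemma~\ref{lemmasameGreen} one has $G_{k,R_\mu}(h,h'|1)=G(h,h'|R_\mu)$ for all $h,h'\in\mathcal{H}_k$, hence $J_k(1)=I^{(2)}_k$. If $\mu$ is non-spectrally degenerate then $R_k>1$, so $1\in[0,R_k)$ and the power series $t\mapsto G_{k,R_\mu}(t)$ converges at $t=1$ together with its termwise derivatives $G'_{k,R_\mu}(t)$ and $G^{(2)}_{k,R_\mu}(t)$, which have the same radius of convergence $R_k$. Therefore the right-hand side of the displayed identity is finite at $t=1$, so $I^{(2)}_k=\frac{1}{2}\big(2G_{k,R_\mu}(1)+4G'_{k,R_\mu}(1)+G^{(2)}_{k,R_\mu}(1)\big)<+\infty$. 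Since $k\in\{1,\dots,N\}$ was arbitrary, $\mu$ has finite Green moments in the sense of Definition~\ref{definitionGreenmoments}.

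I do not expect a genuine obstacle here. The only points needing a little care are recognising $p_{k,R_\mu}$ as a genuine convolution operator on the group $\mathcal{H}_k$ (so that the formulae of Section~\ref{SectionCombinatorialGreen} transfer without change), and observing that the double sum $I^{(2)}_k$ — which ranges over all pairs $(h,h')$, and not merely over $(e,e)$ — is nonetheless controlled by the derivatives at $t=1$ of the single Green function $G_{k,R_\mu}(e,e|t)$, which is exactly what Lemma~\ref{lemmageneralformuladerivatives} provides. Everything else is bookkeeping.
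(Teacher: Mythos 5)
Your proof is correct and follows the same route the paper takes: both apply Lemma~\ref{lemmageneralformuladerivatives} to the first-return kernel $p_{k,R_\mu}$ on $\mathcal{H}_k$ to identify $I^{(2)}_k$ with a finite combination of the derivatives of $t\mapsto G_{k,R_\mu}(t)$ at $t=1$, then conclude from $R_k>1$. The paper presents this as a one-line remark before the proposition; your write-up simply makes explicit the translation-invariance of $p_{k,R_\mu}$ and the exact $F_2$ identity, which are indeed the only points needing care.
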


We will also prove later that if $\mu$ is non-spectrally degenerate, then it is divergent, see Proposition~\ref{relationR_kG'(R)}.
In particular, we state here the following.

\begin{proposition}\label{propspectraldegenerescenceimpliesspectralpositiverecurrence}
With the same notations, if $\mu$ is non-spectrally degenerate, then it is spectrally positive-recurrent.
\end{proposition}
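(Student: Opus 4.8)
The plan is to read the statement off Definition~\ref{definitionspectralpositiverecurrent}: one must check that a non-spectrally degenerate $\mu$ (i) has finite Green moments and (ii) is divergent. Point (i) is precisely Proposition~\ref{propspectraldegenerescenceimpliesfiniteGreenmoments}, which is already available, so the entire content of the statement lies in point (ii): \emph{if $R_k>1$ for every $k\in\{1,\dots,N\}$, then $\frac{d}{dr}_{|r=R_\mu}G(e,e|r)=+\infty$.} This is exactly the assertion restated and proved later as Proposition~\ref{relationR_kG'(R)}, and the rest of this sketch describes how I would obtain it.

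First I would reformulate divergence as an $\ell^2$ statement. By Lemma~\ref{lemmafirstderivative} together with the symmetry of $\mu$ (which gives $G(\gamma,e|r)=G(e,\gamma|r)$), one has $\frac{d}{dr}\bigl(rG(e,e|r)\bigr)=\sum_{\gamma\in\Gamma}G(e,\gamma|r)^2=G(e,e|r)+rG'(e,e|r)$; since $G(e,e|R_\mu)<+\infty$, the measure $\mu$ is divergent if and only if $\sum_{\gamma\in\Gamma}G(e,\gamma|R_\mu)^2=+\infty$, that is, $G(e,\cdot\,|R_\mu)\notin\ell^2(\Gamma)$. I would then argue by contraposition: assuming $G(e,\cdot\,|R_\mu)\in\ell^2(\Gamma)$, I would produce some $k$ with $R_k=1$.

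The tool is the excursion decomposition that pervades Section~\ref{SectionGreenparabolics}: a path from $e$ is cut at the entrance and exit points of the cosets $\gamma\mathcal{H}_k$ it penetrates deeply, the portions between successive deep excursions being comparable, \emph{uniformly in $r\le R_\mu$}, to products of Green functions along a relative geodesic by the weak relative Ancona inequalities~(\ref{equationAncona}), and the deep excursions themselves being governed by the first-return kernels $p_{k,r}$, whose Green functions satisfy $G_{k,r}(h,h'|1)=G(h,h'|r)$ by Lemma~\ref{lemmasameGreen}. Squaring and summing $G(e,\gamma|R_\mu)^2$ over $\Gamma$ and regrouping according to the final deep excursion, one is led to compare $\sum_{\gamma}G(e,\gamma|R_\mu)^2$ with an expression in which each parabolic $\mathcal{H}_k$ contributes a factor controlled by $\sum_{h\in\mathcal{H}_k}G_{k,R_\mu}(e,h|1)^2=\tfrac{1}{2}\frac{d}{dt}_{|t=1}\bigl(tG_{k,R_\mu}(e,e|t)\bigr)$. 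The heart of the matter — and the step I expect to be the main obstacle — is to extract from this comparison the quantitative mechanism by which $R_k>1$ for all $k$ forces the left-hand side to diverge: one has to show that when every first-return walk sits strictly below its own spectral radius at $t=1$, the parabolic factors are too small for the regrouped series to converge, so that convergence of $\sum_{\gamma}G(e,\gamma|R_\mu)^2$ would be possible only if $R_k=1$ for some $k$. This is where the finiteness of $G(e,e|R_\mu)$ coming from non-amenability, the symmetry of the first-return kernels, and the combinatorial control of the excursions all have to be combined carefully; the remainder is bookkeeping.

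Granting point (ii), the conclusion is immediate: by Proposition~\ref{propspectraldegenerescenceimpliesfiniteGreenmoments} a non-spectrally degenerate $\mu$ has finite Green moments, by point (ii) it is divergent, hence by Definition~\ref{definitionspectralpositiverecurrent} it is spectrally positive-recurrent.
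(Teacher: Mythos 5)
The outer structure of your proposal is correct and matches the paper's: Proposition~\ref{propspectraldegenerescenceimpliesfiniteGreenmoments} gives finiteness of the Green moments, and the entire content is the divergence statement, which is exactly Proposition~\ref{relationR_kG'(R)}. The paper itself treats the present proposition as a corollary of those two facts, and you identify this reduction cleanly.

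Where you depart from the paper is in your sketch of Proposition~\ref{relationR_kG'(R)}, and there the gap you flag is genuine — and your guiding intuition points in the wrong direction. You reformulate divergence as $G(e,\cdot\,|R_\mu)\notin\ell^2(\Gamma)$ and propose to argue by contraposition, trying to extract a $k$ with $R_k=1$ from a convergent $\ell^2$ sum by regrouping along the final deep excursion. Two things go wrong. First, the mechanism you anticipate (``the parabolic factors are too small for the regrouped series to converge'') is backwards: having $R_k>1$ for every $k$ makes the parabolic contributions $I^{(j)}_k(R_\mu)$ \emph{finite and tamely growing}, which is precisely what keeps them from blocking the divergence coming from the relatively hyperbolic structure. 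Second, and more structurally, the paper's proof does \emph{not} isolate a parabolic with $R_k=1$; it never touches the contrapositive in that form. Instead it works with \emph{all} the higher sums $I^{(j)}(R_\mu)$ simultaneously, not just the $\ell^2$ quantity $I^{(1)}(R_\mu)$. Lemma~\ref{roughequadiffhighorder} gives a recursive upper bound for $I^{(j)}(R_\mu)/I^{(1)}(R_\mu)$ in terms of products $I^{(i_1)}\cdots I^{(i_l)}$ and the parabolic $I_k^{(l)}$; assuming $R_k>1$ for all $k$, Proposition~\ref{propderivativesGreensubexponential} makes the parabolic terms at most exponential, and assuming (for contradiction) $I^{(1)}(R_\mu)<\infty$, one bounds the $I^{(j)}(R_\mu)$ by a majorizing sequence $J_j$ whose generating function $J(z)$ satisfies a cubic polynomial equation. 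An implicit-function argument shows $J(z)$ is analytic near $0$, hence $J_j$, and so $I^{(j)}(R_\mu)$, grow at most geometrically; Proposition~\ref{propderivativesGreensubexponential} then forces the Taylor coefficients $G^{(j)}(e,e|R_\mu)/j!$ to grow at most geometrically, contradicting the fact that $R_\mu$ is the radius of convergence. This contradiction is the whole point: the role of $R_k>1$ is to ensure the parabolic contributions cannot be responsible for the singularity at $R_\mu$, which is a statement that requires controlling all orders $j$, not just the $\ell^2$ norm.

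So: your reduction is right, but your sketch of the crucial Proposition~\ref{relationR_kG'(R)} is both incomplete and headed toward the wrong mechanism; you would want to study the growth of all the $I^{(j)}(R_\mu)$ via the hierarchical decomposition of Lemma~\ref{roughequadiffhighorder} and the generating-function contradiction, rather than trying to locate a degenerate parabolic from a single $\ell^2$ bound.
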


\subsection{Analogy with counting theorems}\label{Sectionanalogycountingtheorems}
We briefly explain here our choice of terminology and give some perspectives on Definitions~\ref{definitionGreenmoments},~\ref{definitiondivergent} and~\ref{definitionspectralpositiverecurrent} and on our main theorem.

Consider a simply connected complete Riemannian manifold $X$ with pinched negative curvature
Let $\Gamma$ be a finitely generated group acting on $X$ via a discrete and free action.
Denote by $M=X/\Gamma$ the quotient manifold.
We assume that $M$ is geometrically finite, or equivalently that $\Gamma$ is geometrically finite, see \cite{Bowditchgeometricallyfinite1} and \cite{Bowditchgeometricallyfinite2} for more details.
In particular $\Gamma$ is relatively hyperbolic with respect to virtually nilpotent parabolic subgroups.

The Poincar\'e series is defined as
\begin{equation}\label{defPoincare}
P_{\Gamma}(s)=\sum_{\gamma\in \Gamma}\mathrm{e}^{-sd(x_0,\gamma\cdot x_0)}.
\end{equation}
The critical exponent $\delta_{\Gamma}$ of the group is the exponential radius of convergence of this series.
Precisely, $P_{\Gamma}(s)$ is finite if $s>\delta_{\Gamma}$ and $P_{\Gamma}(s)$ is infinite if $s<\delta_{\Gamma}$.

Following S.~Patterson and D.~Sullivan \cite{Patterson}, \cite{Sullivan}, we say that $\Gamma$ is divergent if $P_{\Gamma}(\delta_{\Gamma})=+\infty$ and that $\Gamma$ is convergent otherwise.

\medskip
We also introduce the series of moments along a parabolic subgroup $\mathcal{H}$ as
\begin{equation}\label{defseriesofmoment}
    M_{\mathcal{H}}(s)=\sum_{\gamma\in \Gamma}d(x_0,\gamma \cdot x_0)\mathrm{e}^{-sd(x_0,\gamma\cdot x_0)}.
\end{equation}
Note that the series of moments is the derivative of the Poincar\'e series of $\mathcal{H}$.
We say that $\Gamma$ is positive-recurrent if it is divergent and if for every parabolic subgroup $\mathcal{H}$, $M_{\mathcal{H}}$ is finite.
Positive-recurrence is a key property for establishing counting theorems and dynamical properties of $\Gamma$ in such contexts.
Note for example that according to \cite[Theorem~B]{DalboOtalPeigne}, $\Gamma$ is positive-recurrent if and only if the associated Bowen-Margulis measure $m_\Gamma$ is finite.

\medskip
We can also introduce the critical exponent $\delta_{\mathcal{H}}$ of the parabolic subgroup $\mathcal{H}$ as the exponential radius of convergence of the Poincar\'e series restricted to $\mathcal{H}$.
We say that $\Gamma$ has a spectral gap if for every parabolic subgroup $\mathcal{H}$, $\delta_{\mathcal{H}}<\delta_\Gamma$.

Clearly, if $\Gamma$ has a spectral gap, then $M_\mathcal{H}$ is finite for every $\mathcal{H}$.
It is also true that if $\Gamma$ has a spectral gap, then it is divergent, see \cite[Theorem~A]{DalboOtalPeigne}.
We also refer to \cite{Roblin} and references therein for many more details.

\medskip
Let us now make a formal analogy with random walks on relatively hyperbolic groups.
Let $\mu$ be an admissible probability measure on a finitely generated group $\Gamma$ defining a transient random walk (that is, for every $\gamma$, $G(e,\gamma)$ is finite).
Following Brofferio and Blach\`ere \cite{BlachereBrofferio}, we introduce the Green distance as
$$d_G(\gamma,\gamma')=-\log F(\gamma,\gamma')=-\log \frac{G(\gamma,\gamma')}{G(e,e)}.$$
Then, $F(\gamma,\gamma')$ is the probability of ever reaching $\gamma'$, starting the random walk at $\gamma$, see \cite[Lemma~1.13~(b)]{Woess}.
When the measure $\mu$ is symmetric, this is indeed a distance.
The triangle inequality can be reformulated as
$$F(\gamma_1,\gamma_2)F(\gamma_2,\gamma_3)\leq F(\gamma_1,\gamma_3).$$
In particular, for any $\gamma_1,\gamma_2,\gamma_3$, we have
\begin{equation}\label{triangleGreen}
G(\gamma_1,\gamma_2)G(\gamma_2,\gamma_3)\leq CG(\gamma_1,\gamma_3)
\end{equation}
for some uniform constant $C$.
Note that this inequality is always true, whether $\mu$ is symmetric or not, since it only states that the probability of reaching $\gamma_3$ starting at $\gamma_1$ is always bigger than the probability of first reaching $\gamma_2$ from $\gamma_1$ and then reaching $\gamma_3$ from $\gamma_2$.

More generally, we set
\begin{equation}\label{defF}
    F(\gamma,\gamma'|r)=\frac{G(\gamma,\gamma'|r)}{G(e,e|r)}.
\end{equation}
We also define the $r$-Green metric as $d_{G}(\gamma,\gamma|r)=-\log F(\gamma,\gamma'|r)$
and the symmetrized $r$-Green metric as
$$\tilde{d}_G(\gamma,\gamma'|r)=-\log F(\gamma,\gamma'|r)-\log F(\gamma',\gamma|r).$$
Notice then that
$$I^{(1)}(r)=\sum_{\gamma\in \Gamma}G(e,\gamma|r)G(\gamma,e|r)=\sum_{\gamma\in \Gamma}\mathrm{e}^{-\tilde{d}_G(e,\gamma|r)}.$$
Thus, $I^{(1)}(r)$ is analogous to the Poincar\'e series associated with the symmetrized Green metric.
The main difference with~(\ref{defPoincare}) is that the parameter $r$ is part of the definition of the metric in our situation.
Lemma~\ref{lemmafirstderivative} shows that $I^{(1)}(r)$ is finite if and only if $\frac{d}{dr}G(e,e|r)$ is finite.
In particular, we see that for $r<R_\mu$, $I^{(1)}(r)$ is finite and for $r>R_\mu$, $I^{(1)}(r)$ is infinite.
Thus, under this analogy, the spectral radius of the random walk should be compared with the critical exponent of the group.
Moreover, we see that our terminology \textit{divergent} and \textit{convergent} is coherent with that of Patterson and Sullivan.

\medskip
As explained, given a parabolic subgroup $\mathcal{H}$, the series of moments defined by~(\ref{defseriesofmoment}) is the derivative of the Poincar\'e series of $\mathcal{H}$.
In our situation, we thus have to check finiteness of $\frac{d}{dr}I^{(1)}(r)$.
According to Lemma~\ref{lemmageneralformuladerivatives} this derivative is finite if and only if $\frac{d^2}{dr^2}G(e,e|r)$ is finite, or equivalently $I^{(2)}_{\mathcal{H}}(r)$ is finite.
This explains our definition of \textit{having finite Green moments}.
Finally, \textit{being spectrally positive-recurrent} is formally analogous to \textit{being positive-recurrent} for a geometrically finite group, as defined above.

The fact that this property of being positive-recurrent seems so important for counting theorems and is in particular equivalent to having a finite Bowen-Margulis measure also gives motivation for our main theorem.

\medskip
Finally, we see that under this analogy, the fact that the measure is non-spectrally degenerate should be compared with the fact that group has a spectral gap.
As noted above, in both situations, this implies being (spectrally) positive-recurrent.

\section{Coding relatively hyperbolic groups}\label{Sectioncoding}
In all this subsection, finite generating sets $S$ of a group $\Gamma$ are assumed to be symmetric for simplicity.
Hyperbolic groups are known to be strongly automatic, meaning that for every such generating set $S$, there exists a finite directed graph $\mathcal{G}=(V,E,v_*)$
with a distinguished vertex $v_*$ called the starting vertex and with a labelling map $\phi:E\rightarrow S$ such that the following holds.
If $\gamma=e_1...e_n$ is a path of adjacent edges in $\mathcal{G}$, define $\phi(e_1...e_m)=\phi(e_1)...\phi(e_m)\in \Gamma$.
The properties satisfied by the labelled graph are the following.
No edge ends at $v_*$, every vertex $v\in V$ can be reached from $v_*$ in $\mathcal{G}$, for every path $\gamma=e_1...e_n$, the path $e,\phi(e_1),\phi(e_1e_2),...,\phi(\gamma)$ in $\Gamma$ is a geodesic from $e$ to $\phi(\gamma)$ for the word distance corresponding to the generating set $S$ and
the extended map $\phi$ is a bijection between paths in $\mathcal{G}$ starting at $v_*$ and elements of $\Gamma$.
We refer to \cite[Chapter~9,~Theorem~13]{GhysHarpe} for a proof of this fact.
This result was first proved by J.~Cannon in \cite{Cannon} for some particular negatively curved groups.

Automatic structures for relatively hyperbolic have been explored by D.~Rebbechi in his thesis \cite{Rebbechithesis} and then by Y.~Antol\'in and L.~Ciobanu in \cite{AntolinCiobanu}.
Namely, D.~Rebbechi proved that if $\Gamma$ is hyperbolic relative to automatic groups, then, it is itself automatic.
Y.~Antol\'in and L.~Ciobanu gave a strengthened version of this result (see \cite[Theorem~1.1]{AntolinCiobanu}).
Let us also mention the work of W.~Neumann and M.~Shapiro on geometrically finite groups (see \cite{NeumannShapiro}).
In the following, we will have to deal with what we call relative automatic structures.
Unfortunately, we can neither use directly the results of D.~Rebbechi nor those of Y.~Antol\'in and L.~Ciobanu.

Let $\Gamma$ be a finitely generated group and let $\Omega$ be a collection of subgroups invariant by conjugacy and such that there is a finite set $\Omega_0$ of conjugacy classes representatives of subgroups in $\Omega$.

\begin{definition}\label{definitionautomaticstructure}
A relative automatic structure for $\Gamma$ with respect to the collection of subgroups $\Omega_0$ and with respect to some finite generating set $S$ is a directed graph $\mathcal{G}=(V,E,v_*)$ with distinguished vertex $v_*$ called the starting vertex, where the set of vertices $V$ is finite and with a labelling map $\phi:E\rightarrow S\cup \bigcup_{\mathcal{H}\in \Omega_0}\mathcal{H}$ such that the following holds.
If $\omega=e_1,...,e_n$ is a path of adjacent edges in $\mathcal{G}$, define $\phi(e_1,...,e_n)=\phi(e_1)...\phi(e_n)\in \Gamma$.
Then,
\begin{itemize}
    \item no edge ends at $v_*$, except the trivial edge starting and ending at $v_*$,
    \item every vertex $v\in V$ can be reached from $v_*$ in $\mathcal{G}$,
    \item for every path $\omega=e_1,...,e_n$, the path $e,\phi(e_1),\phi(e_1e_2),...,\phi(\gamma)$ in $\Gamma$ is a relative geodesic from $e$ to $\phi(\gamma)$, that is the image of $e,\phi(e_1),\phi(e_1e_2),...,\phi(\gamma)$ in $\hat{\Gamma}$ is a geodesic for the metric $\hat{d}$,
    \item the extended map $\phi$ is a bijection between paths in $\mathcal{G}$ starting at $v_*$ and elements of $\Gamma$.
\end{itemize}
\end{definition}

Note that the union $S\cup \bigcup_{\mathcal{H}\in \Omega_0}\mathcal{H}$ is not required to be a disjoint union.
Actually, the intersection of two distinct subgroups $\mathcal{H},\mathcal{H}'\in \Omega_0$ can be non-empty.
Also note that we require the vertex set $V$ to be finite.
However, the set of edges is infinite, except if the parabolic subgroups $\mathcal{H}$ are finite (in which case the group $\Gamma$ is hyperbolic).

If there exists a relative automatic structure for $\Gamma$ with respect to $\Omega_0$ and $S$, we say that $\Gamma$ is automatic relative to $\Omega_0$ and $S$.
The goal of this subsection is to prove the following theorem.

\begin{theorem}\label{thmcodingrelhypgroups}
Let $\Gamma$ be a relatively hyperbolic group and let $\Omega_0$ be a finite set of representatives of conjugacy classes of the maximal parabolic subgroups.
For every symmetric finite generating set $S$ of $\Gamma$,
$\Gamma$ is automatic relative to $\Omega_0$ and $S$.
\end{theorem}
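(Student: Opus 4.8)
The plan is to follow Cannon's coding of geodesics in hyperbolic groups, replacing the ordinary Cayley graph by the relative Cayley graph $\hat\Gamma$ and using the lift to $\Cay(\Gamma,S)$ to compensate for the fact that $\hat\Gamma$ is not locally finite. First I would fix an arbitrary total order $\prec$ on the (infinite) alphabet $S\cup\bigcup_{\mathcal H\in\Omega_0}\mathcal H$. For $\gamma\in\Gamma$, call a word over this alphabet a relative geodesic word for $\gamma$ if it represents $\gamma$ and has length $\hat d(e,\gamma)$, let $w_\gamma$ be the $\prec$-lexicographically smallest such word, and call a word \emph{canonical} if it equals $w_{\gamma}$ for the element $\gamma$ it represents. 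A one-line argument gives that canonical words are prefix-closed: if $u\prec v$ were relative geodesic words for a common element and $v$ occurred as a prefix of a canonical word, appending the remaining suffix to $u$ would yield a strictly smaller relative geodesic word with the same value, a contradiction. One also records that geodesics in $\hat\Gamma$ never backtrack into a coset, since two $\mathcal H$-edges with endpoints in the same coset $g\mathcal H$ can be merged into one; thus each $w_\gamma$ is without backtracking.

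Next I would define the \emph{relative cone type} of $\gamma$ to be $C(\gamma)=\{w:\ w_\gamma w\text{ is canonical}\}$. Prefix-closedness gives the recursion $sw\in C(\gamma)$ iff $s\in C(\gamma)$ (as a one-letter word) and $w\in C(\gamma s)$, and moreover, if $s\in C(\gamma)$ then $C(\gamma s)$ is determined by $C(\gamma)$ and $s$, since $w\in C(\gamma s)\iff sw\in C(\gamma)$. I then let $\mathcal G$ be the graph whose vertices are the relative cone types together with a start vertex $v_*=C(e)$ (the set of all canonical words, equipped with the required trivial loop), with an edge labelled $s$ from $c$ to $c'$ whenever $c=C(\gamma)$, $s\in c$ and $c'=C(\gamma s)$. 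Reading labels along a path from $v_*$ gives the map $\phi$; it is a bijection onto $\Gamma$, surjectivity by running $w_\gamma$ through $\mathcal G$ (each prefix is canonical so each successive edge exists) and injectivity because $\mathcal G$ is deterministic and any path from $v_*$ spells exactly the canonical word of its value. The remaining requirements of Definition~\ref{definitionautomaticstructure} are then immediate: every vertex is reached by the canonical word of a representative element; each path spells a relative geodesic word by construction; and no nontrivial edge terminates at $v_*$, because such an edge would make $C(\gamma s)=C(e)$ and hence produce a canonical word of positive length representing $e$, whereas the empty word is the only canonical word for $e$.

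Everything therefore reduces to showing $\mathcal G$ has finitely many vertices, i.e.\ that $C(\gamma)$ takes only finitely many values, and this is the main obstacle. By the recursion it suffices to attach to each $\gamma$ a bounded amount of data $D(\gamma)$ which determines which letters continue $w_\gamma$ to a canonical word and which determines $D(\gamma s)$ for each such letter; then $C(\gamma)$ depends only on $D(\gamma)$ by induction on word length. I would split $D(\gamma)$ into a parabolic part and a transitional part. For the parabolic part, note that $\gamma$ lies in exactly the $N$ cosets $\gamma\mathcal H_1,\dots,\gamma\mathcal H_N$ and that $\hat d(e,\gamma\mathcal H_j)\in\{\hat d(e,\gamma)-1,\hat d(e,\gamma)\}$ for each $j$; recording the set of indices for which it equals $\hat d(e,\gamma)-1$, together with the index (if any) of the coset whose component ends at $\gamma$ --- which by the no-backtracking property is the only coset meeting $\gamma$ that a geodesic continuation must avoid --- captures all coset-related information, since by the BCP property two relative geodesics with a common endpoint penetrate the same cosets at uniformly close points, so the interiors of the cosets lying behind $\gamma$ are invisible to the future. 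For the transitional part, lift $w_\gamma$ to $\Cay(\Gamma,S)$: by Lemma~\ref{lemmaliftgeodesic} the lift is a $(\lambda,c)$-quasigeodesic on which $\gamma$ is an $(\eta_1,\eta_2)$-transition point, and by Lemma~\ref{projectiontransitionpoints} transition points on geodesics of the \emph{locally finite} graph $\Cay(\Gamma,S)$ stay within bounded distance of relative geodesics; using hyperbolicity of $\hat\Gamma$ and fellow-travelling exactly as in Cannon's argument, which $S$-letters and which coset-entries extend $w_\gamma$ canonically is then governed by the labelled isomorphism type of a ball of bounded radius around $\gamma$ in $\Cay(\Gamma,S)$, of which there are finitely many. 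The two delicate points are that "$w_\gamma w$ is canonical", and not merely a relative geodesic, must also be locally decidable --- which forces one to choose $\prec$ so that lexicographic minimality among relative geodesic words is detected on subwords of bounded length, as in the hyperbolic case --- and that the parabolic and transitional data interact only through bounded information, which is precisely where the BCP constants $C_{\lambda,c}$ are used.
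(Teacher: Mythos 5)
You take the same Cannon-style route as the paper (relative cone types, lexicographically minimal representatives, a product automaton), but the crux of the argument---finiteness of relative cone types---is asserted rather than proved. Your claim that ``which $S$-letters and which coset-entries extend $w_\gamma$ canonically is governed by the labelled isomorphism type of a ball of bounded radius around $\gamma$ in $\Cay(\Gamma,S)$'' is exactly what requires work, and it is not obvious: a coset-entry $h\in\mathcal H_k$ can be arbitrarily far from $e$ in $\Cay(\Gamma,S)$, so no bounded ball sees it directly, and your ``parabolic part'' (roughly $2N$ bits: which cosets lie at $\hat d$-distance $\hat d(e,\gamma)-1$, and which coset was just exited) cannot decide which \emph{short} $\mathcal H_k$-letters extend $w_\gamma$ to a relative geodesic, nor does it give an update rule. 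The paper handles this in two steps you are missing: Lemmas~\ref{relativegeodesiclongdistances1} and~\ref{relativegeodesiclongdistances2} show that all sufficiently $S$-long elements of a fixed parabolic subgroup are interchangeable as relative-geodesic extensions, so only boundedly many coset-letters must be tracked individually; then the function $\hat\rho_C(\gamma)\colon g\in B_C\mapsto\hat d(e,\gamma g)-\hat d(e,\gamma)$, defined on a ball of the \emph{locally finite} Cayley graph, is shown (Lemmas~\ref{relativegeodesiclongdistances3}, \ref{samefunctionsameconetype}) to determine the relative cone type and to propagate along the automaton. This is the genuinely new geometric content, and none of it appears in your outline.

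The lexicographic side is also glossed over. You say one must ``choose $\prec$ so that lexicographic minimality \dots\ is detected on subwords of bounded length,'' but that inverts the logic: the argument works for an arbitrary order, the key point being Lemmas~\ref{Cfollowinggeodesics} and \ref{Cfollowingsequences}, which use BCP to show that two reduced sequences representing the same element $C$-follow each other in $\Cay(\Gamma,S)$ (not merely in $\hat\Gamma$). This is what lets the paper track the set of surviving smaller competitors of each prefix as a subset of a fixed finite ball $B_C$, yielding the product vertex set $V_1=V_0\times P_C$. Collapsing the two stages into a single bounded datum $D(\gamma)$, as you propose, is a reasonable organizational choice, but your description of $D(\gamma)$ is too vague to verify that it is finite-valued, that it determines the set of canonical one-letter extensions, and that it determines $D(\gamma s)$. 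Until the finiteness and propagation of such a datum are actually established---essentially reproducing the content of Lemmas~\ref{relativegeodesiclongdistances1}--\ref{Cfollowingsequences}---the proposal has a real gap at its center.
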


During the proof, we will be careful about the notations for the distance. The letter $d$ will stand for the distance in the Cayley graph for the generating set $S$ whereas the letter $\hat{d}$ will refer to the graph distance in $\hat{\Gamma}$. We will use both.

We will use several times the following classical results.
Recall that a $k$-local relative geodesic is a path $(x_1,...,x_n)$ such that every subpath of length $k$ is a relative geodesic.

\begin{lemma}\cite[Theorem~5.2]{AntolinCiobanu}\label{Theorem52AntolinCiobanu}
There exist $\lambda$ and $c$ and a finite set of non-geodesic sequences $\mathcal{NG}$ of the form $\overline{\sigma}=(\sigma_1,...,\sigma_n)$, where $\sigma_i\in S\cup \bigcup \mathcal{H}_k$, such that every 2-local relative geodesic $(x_1,...,x_n)$ that does not contain any sequence of $\mathcal{NG}$ as a subpath, is a relative $(\lambda,c)$-quasi geodesic path.
\end{lemma}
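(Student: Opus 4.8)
\emph{Proof sketch.} The plan is to reduce the statement to the classical local-to-global principle for geodesics in hyperbolic spaces. Since $\Gamma$ is relatively hyperbolic, it is in particular weakly relatively hyperbolic, so $\hat{\Gamma}$ is $\delta$-hyperbolic for some $\delta$. By Cannon's lemma (see \cite[Chapter~9]{GhysHarpe}, \cite{Cannon}), there exist $L_0=L_0(\delta)$ and constants $(\lambda,c)$ such that every $L_0$-local geodesic in $\hat{\Gamma}$ is a $(\lambda,c)$-quasi geodesic. Since an $L_0$-local relative geodesic is, by definition, a relative path whose image in $\hat{\Gamma}$ is an $L_0$-local geodesic, any such path is a relative $(\lambda,c)$-quasi geodesic path. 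It therefore suffices to produce a finite set $\mathcal{NG}$ of finite non-geodesic label sequences such that every $2$-local relative geodesic avoiding all elements of $\mathcal{NG}$ as subpaths is automatically an $L_0$-local relative geodesic.

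To build $\mathcal{NG}$, consider a $2$-local relative geodesic $\omega=(x_1,\dots,x_n)$ that is not an $L_0$-local relative geodesic: some subpath $(x_j,\dots,x_{j+m})$ with $m\le L_0$ fails to be a relative geodesic, i.e.\ its label sequence $\overline{\sigma}=(\sigma_1,\dots,\sigma_m)$, with $\sigma_i=x_{j+i-1}^{-1}x_{j+i}\in\left(S\cup\bigcup_k\mathcal{H}_k\right)\setminus\{e\}$ after translating so that $x_j=e$, is non-geodesic. Note that the $2$-local hypothesis already rules out, for free, the only patterns coming from the infinite alphabet that one would otherwise have to forbid, namely two consecutive labels in a common piece and immediate backtracking $\sigma_i\sigma_{i+1}=e$, since in either case the corresponding length-$2$ subpath has $\hat d$-distance $<2$ between its endpoints. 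The remaining obstacle to finiteness is that a label $\sigma_i$ lying in an infinite peripheral subgroup $\mathcal{H}_k$ can have arbitrarily large length in $\Cay(\Gamma,S)$. I would remove this obstacle by showing that a witnessing window can always be taken with all of its labels in the finite set $\left(S\cup\bigcup_k\mathcal{H}_k\right)\cap\{\gamma:d(e,\gamma)\le M\}$ for a uniform constant $M$: a label $\sigma_i\in\mathcal{H}_k$ with large $S$-length corresponds to a single edge of $\hat{\Gamma}$ penetrating deeply into the coset $x_{j+i-1}\mathcal{H}_k$, and the BCP property — applied to $\omega$ restricted to the relevant window against a genuine relative geodesic with the same endpoints, together with Lemmas~\ref{lemmaliftgeodesic} and~\ref{projectiontransitionpoints} on lifts and transition points — should show that the combinatorial defect responsible for non-geodesity is supported away from this deep excursion, so that the excursion can be truncated to a bounded-length element of $\mathcal{H}_k$ without destroying either the $2$-local relative geodesic property of the ambient path or the non-geodesity of the window. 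Since $m\le L_0$ and each label then ranges over a fixed finite set, only finitely many label sequences $\overline{\sigma}$ arise; let $\mathcal{NG}$ be the finite set of those that are non-geodesic.

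With this choice, a $2$-local relative geodesic avoiding $\mathcal{NG}$ cannot contain a non-geodesic subpath of length at most $L_0$: if it did, the truncation procedure above would plant an element of $\mathcal{NG}$ inside it, contradicting the hypothesis. Hence such a path is an $L_0$-local relative geodesic, and thus a relative $(\lambda,c)$-quasi geodesic path. I expect the truncation step of the second paragraph to be the main obstacle: one must show that a deep peripheral excursion inside a short non-geodesic window can be shortened to a bounded-length one while keeping the window non-geodesic and the ambient path a $2$-local relative geodesic, and, more delicately, in a way that actually realizes a forbidden pattern inside the original path $\omega$ rather than inside some modification of it. This has to be arranged so as to avoid circularity, since BCP is a statement about quasi geodesic paths while quasi geodesity of $\omega$ is exactly what is at stake; a natural device is to induct on the window length $m$, or to work only with windows short enough that the ambient path is a quasi geodesic there for length reasons alone, so that BCP may legitimately be invoked.
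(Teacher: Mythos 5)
The paper does not prove this lemma; it cites \cite{AntolinCiobanu}, so there is no in-house argument to compare against, and your proposal stands or falls on its own.

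The reduction you make in the first paragraph is sound. The local-to-global principle for $\delta$-hyperbolic spaces does give a constant $L_0$ such that $L_0$-local geodesics in $\hat\Gamma$ are $(\lambda,c)$-quasi geodesics, and your worry about circularity in invoking BCP is easy to dispel and should be dispelled explicitly: any path of $\hat d$-length at most $L_0$ is tautologically a $(1,L_0)$-quasi geodesic, so BCP applies to the candidate non-geodesic window and to a genuine relative geodesic joining its endpoints with constants depending only on $L_0$, with no appeal to the quasi geodesity of the ambient path. So the circularity you flag at the end is not the real obstacle.

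The real obstacle is exactly the one you name but do not resolve. Your construction of $\mathcal{NG}$ requires that a non-geodesic window of $\hat d$-length at most $L_0$ containing a deep peripheral label $\sigma_i$ be ``truncated'' to one with all labels of bounded $S$-length, and you then want to conclude that the original path $\omega$ contains a forbidden pattern. But replacing $\sigma_i$ by a short element $\tilde\sigma_i$ of the same peripheral subgroup produces a \emph{different} word; the resulting sequence $(\dots,\sigma_{i-1},\tilde\sigma_i,\sigma_{i+1},\dots)$ is not a subpath of $\omega$, and there is no reason it should occur anywhere in $\omega$. So even if one shows (which does require care, and can be done by modifying the parabolic edge of the competing shorter relative geodesic accordingly) that the truncated window is still a $2$-local relative geodesic and still non-geodesic, one has not produced a witness inside the original word. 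The argument as written therefore proves only that a $2$-local relative geodesic with no non-geodesic window of bounded $\hat d$-length \emph{and bounded label length} is a quasi geodesic; it does not rule out a $2$-local relative geodesic all of whose short non-geodesic windows contain a deep peripheral letter. To close this gap one needs one of the following, and neither is supplied: (a) a proof that a minimal non-geodesic $2$-local window of $\hat d$-length $\le L_0$ can in fact never contain a deep peripheral label (this is plausible for windows of length $3$ by a direct BCP contradiction, but the general case is not immediate), or (b) a forbidding mechanism that treats the deep label as a wildcard of the right peripheral type rather than as a specific letter, which is a strictly stronger structure than a finite set of label sequences and would require reformulating the statement. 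As it stands, the second paragraph of your argument contains a genuine gap, and the ``induct on $m$ or restrict to short windows'' device you suggest at the end addresses the BCP applicability issue (already fine) but not the subword-realization issue, which is the one that matters.
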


\begin{lemma}\cite[Corollary~8.15]{Hruska}\label{Proposition315Osin}
For every $\lambda\geq1,c\geq0$ and $K\geq 0$, there exists $C\geq 0$ such that the following holds.
Let $(x_1,...,x_n)$ and $(x_1',...,x_m')$ be two relative $(\lambda,c)$-quasi geodesic paths such that $d(x_1,x'_1)\leq K$ and $d(x_n,x'_m)\leq K$.
Assume that these quasi-geodesic are without backtracking.
Then for every $1\leq j\leq n$ such that $x_j$ is the entrance or exit point of a parabolic subgroup, there exists $i_j$ such that $d(x_j,x_{i_j}')\leq C$.
In particular, if $(x_1,...,x_m)$ is a relative geodesic, then for every $j$, there exists such an $i_j$.
\end{lemma}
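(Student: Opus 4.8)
The plan is to deduce this from the BCP property together with the hyperbolicity of $\hat\Gamma$. Hyperbolicity of $\hat\Gamma$ and the stability of quasi-geodesics (the Morse lemma) make the two relative $(\lambda,c)$-quasi-geodesics $\hat d$-fellow-travel, but $\hat d$-closeness carries no information in the metric $d$; the role of BCP is exactly to promote it to genuine $d$-closeness at the points where the paths penetrate parabolic cosets.

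First I would reduce to the case $x_1=x_1'$ and $x_n=x_m'$. Concatenate $(x_1',\dots,x_m')$ with a geodesic of $\Cay(\Gamma,S)$ from $x_1$ to $x_1'$ on the left and from $x_m'$ to $x_n$ on the right; each added segment has length at most $K$, and since $\hat d\le d$ it has $\hat d$-diameter at most $K$, so the concatenation is a relative $(\lambda_1,c_1)$-quasi-geodesic with $\lambda_1,c_1$ depending only on $\lambda,c,K$. After the standard surgery that removes backtracking (which only shortens the path and, the added segments having $d$-length at most $K$, displaces the entrance and exit points of the original path by at most a bound depending on $K$) one obtains a relative quasi-geodesic without backtracking $\beta_2$ with the same endpoints $p,q$ as $\alpha_1$, whose vertex set is contained in the vertex set of $\alpha_2$ together with at most $2K$ points lying $d$-within $K$ of $\{x_1,x_n\}$. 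It therefore suffices to prove the conclusion for the pair $(\alpha_1,\beta_2)$.

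Let $C_0=C_{\lambda',c'}$ be the BCP constant for $\lambda'=\max(\lambda,\lambda_1,1)$ and $c'=\max(c,c_1,0)$, noting that a relative geodesic is a $(1,0)$-quasi-geodesic. Suppose $x_j$ is an entrance or exit point of a coset $g\mathcal{H}$ in which $\alpha_1$ travels more than $C_0$. Then clause~(1) of BCP applied to $(\alpha_1,\beta_2)$ forces $\beta_2$ to enter $g\mathcal{H}$, and clause~(2) provides the matching entrance (resp.\ exit) point $x'_{i_j}$ of $\beta_2$ with $d(x_j,x'_{i_j})\le C_0$; this already settles every $x_j$ lying at the boundary of a ``deep'' coset component. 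For the remaining vertices --- those at the boundary of a shallow component, i.e.\ one in which $\alpha_1$ travels at most $C_0$, and, for the final ``in particular'' clause, the vertices of $\alpha_1$ carried by $S$-edges when $\alpha_1$ is a relative geodesic --- I would pass to $\Cay(\Gamma,S)$. By Lemma~\ref{lemmaliftgeodesic}, the lift of a relative geodesic joining $p$ to $q$ is a $\Cay(\Gamma,S)$-quasi-geodesic whose distinguished vertices are $(\eta_1,\eta_2)$-transition points, and by Lemma~\ref{projectiontransitionpoints} the transition points of a $\Cay(\Gamma,S)$-geodesic $\zeta$ from $p$ to $q$ are $d$-close to vertices of any relative geodesic with the same endpoints; fixing one such $\zeta$ and applying this to relative geodesics shadowing $\alpha_1$ and $\beta_2$ respectively, and using $\delta$-hyperbolicity of $\hat\Gamma$ to see that two relative geodesics from $p$ to $q$ $\hat d$-fellow-travel (hence BCP-match their deep components), one concludes that every such remaining vertex, being a transition point on the relevant lift, is within a uniform $d$-distance of $\beta_2$. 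The last assertion then follows because a vertex of a relative geodesic is either an entrance/exit point or the endpoint of an $S$-edge adjacent to one, hence within $d$-distance $1+C_0$ of a point already controlled.

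The step I expect to be the main obstacle is this last transfer from $\hat d$ to $d$ away from the deeply penetrated cosets: the Morse lemma in $\hat\Gamma$ gives only $\hat d$-fellow-traveling, and converting it into $d$-fellow-traveling at the transition points requires simultaneously tracking the transition-point structure of the lifts and the bounded $d$-diameter of the shallow excursions. The deep-coset case, by contrast, is an immediate application of the two clauses of BCP.
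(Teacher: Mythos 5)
The paper does not prove this lemma: it cites Hruska's Corollary~8.15 for the relative-geodesic case and remarks in a sentence that the extension to quasi-geodesics follows from BCP, also noting that the full statement is Osin's Proposition~3.15. Your proposal is therefore a genuinely different route, attempting a self-contained proof, and the deep-coset step you give via the two clauses of BCP is correct. The shallow-coset step, however, does not close. Lemmas~\ref{lemmaliftgeodesic} and~\ref{projectiontransitionpoints} compare relative \emph{geodesics} with $\Cay(\Gamma,S)$-geodesics; after invoking them you have at best placed $x_j$ near a vertex of an auxiliary relative geodesic from $p$ to $q$, and you still need that vertex to be $d$-close to the quasi-geodesic $\beta_2$ --- which is precisely the ``in particular'' clause of the lemma you are proving. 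The $\hat{d}$-fellow-traveling coming from hyperbolicity of $\hat{\Gamma}$ cannot supply the missing $d$-closeness away from the BCP-matched deep components; that is exactly the obstacle you flag at the start, but the proposal does not actually get past it.

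The last sentence is also false, and the reduction built on it collapses. It is not the case that every vertex of a relative geodesic is an entrance or exit point or adjacent to one. In $\Gamma=F_2*\mathcal{H}$ with $F_2=\langle a,b\rangle$, the path $(e,a,a^2,\dots,a^{100})$ is a relative geodesic whose interior vertices are not entrance or exit points of any parabolic coset and are not adjacent to any; relative geodesics can contain arbitrarily long runs of $S$-edges. The strength of the relative-geodesic case comes from the fact that \emph{all} its vertices lift to $(\eta_1,\eta_2)$-transition points (Lemma~\ref{lemmaliftgeodesic}), so the transition-point comparison lemmas control every vertex, not just coset-boundary ones; that machinery needs to be applied symmetrically to both paths against a fixed $\Cay(\Gamma,S)$-geodesic, not bypassed via adjacency to deep-coset endpoints already controlled by BCP.
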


Actually, \cite[Corollary~8.15]{Hruska} is only about relative geodesics, but one deduces the result for relative quasi-geodesic paths using the BCP property.
Also note that this lemma is exactly the content of \cite[Proposition~3.15]{Osin}, although the author uses a particular generating set there.
We also have a version of this lemma for relative geodesic rays.

\begin{lemma}\label{Proposition315Osininfinite}
For every $(\lambda,c)$ and $K\geq 0$, there exists $C\geq 0$ such that the following holds.
Let $(x_1,...,x_n,...)$ and $(x_1',...,x_m',...)$ be two infinite relative $(\lambda,c)$-quasi-geodesic paths such that
$d(x_1,x'_1)\leq K$ and $x_n$ and $x_m'$ converge to the same conical limit point $\xi$.
Assume that these relative geodesics are withouth backtracking.
Then for every $1\leq j\leq n$ such that $x_j$ is the entrance or exit point of a parabolic subgroup, there exists $i_j$ such that $d(x_j,x_{i_j}')\leq C$.
In particular, if $(x_1,...,x_n,...)$ is a relative geodesic, then for every $j$, there exists such an $i_j$.
\end{lemma}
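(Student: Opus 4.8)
The plan is to deduce this ray version from its finite counterpart, Lemma~\ref{Proposition315Osin}, by truncating the two rays far enough out. Fix an index $j$ with $x_j$ an entrance or exit point of a parabolic coset. The crucial step is the following matching statement: there is a constant $K'=K'(\lambda,c,K)$ such that for arbitrarily large $N$ one can find $m$ with $d(x_N,x'_m)\leq K'$. Granting this, choose such a pair $(N,m)$ with $N>j$. The truncated paths $(x_1,\dots,x_N)$ and $(x'_1,\dots,x'_m)$ are relative $(\lambda,c)$-quasi-geodesic paths without backtracking, with $d(x_1,x'_1)\leq K$ and $d(x_N,x'_m)\leq K'$, so Lemma~\ref{Proposition315Osin}, applied with $\max(K,K')$ in place of $K$, produces a constant $C=C(\lambda,c,K)$ and, since $j<N$ and $x_j$ is an entrance/exit point, an index $i_j\leq m$ with $d(x_j,x'_{i_j})\leq C$. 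As $C$ depends only on $\lambda,c,K$, this is precisely the conclusion of the lemma.

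To establish the matching statement I would use conicality of $\xi$ in an essential way. By Lemma~\ref{lemmaliftgeodesic} and the remark following it, the lift of each of the relative quasi-geodesic rays $\alpha$ and $\alpha'$ stays within a bounded $d$-distance, depending only on $\lambda,c$, of a genuine geodesic ray of $\Cay(\Gamma,S)$ converging to $\xi$, say $\beta$ and $\beta'$, whose starting points are at $d$-distance bounded in terms of $K$. Now, $\xi$ being a conical limit point and not a bounded parabolic point, no geodesic ray to $\xi$ is eventually contained in a bounded neighbourhood of a single parabolic coset; hence $\beta$ and $\beta'$ carry $(\eta_1,\eta_2)$-transition points escaping every bounded set, and, converging to the same conical point from nearby starting points, their transition-point sets lie within bounded $d$-Hausdorff distance, the bound depending only on $\lambda,c,K$. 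This is the standard fellow-travelling statement near a conical limit point, which can be extracted from the cusped space picture or directly from \cite{Hruska} and \cite{GerasimovPotyagailo}, and it is the ray analogue of Lemma~\ref{projectiontransitionpoints}. A far-out transition point of $\beta$ is therefore $d$-close to a transition point of $\beta'$, hence to points of $\alpha$ and of $\alpha'$; and, since the vertices of $\alpha$ at parabolic entrance/exit points and along $S$-steps are transition points on its lift (as in Lemma~\ref{lemmaliftgeodesic}, using the BCP property for quasi-geodesics), these points can be taken to be actual vertices $x_N$ of $\alpha$ and $x'_m$ of $\alpha'$, at $d$-distance bounded in terms of $\lambda,c,K$ from one another. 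Finally, since $\{x_1,\dots,x_j\}$ is a finite, hence $d$-bounded, set, choosing the transition point of $\beta$ far enough out forces $N>j$. This gives the matching statement.

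The main obstacle is the passage from the geometry of $\hat{\Gamma}$ to that of $\Cay(\Gamma,S)$, and this is exactly where conicality cannot be dispensed with. Were $\xi$ a bounded parabolic point the statement would be false: two relative geodesic rays running into the parabolic coset fixed by $\xi$ stay at bounded $\hat{d}$-distance from one another while their $d$-distance is unbounded, so no uniform $C$ can exist. The argument must therefore be routed through transition points, which are precisely the features of $\Cay(\Gamma,S)$ that ``see'' the word metric and along which rays converging to a common endpoint are forced to fellow-travel; the property of a conical limit point that makes this work is that geodesic rays to it are never eventually deep in a single parabolic coset, so that they carry transition points leaving every bounded set, in sharp contrast with bounded parabolic points. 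A secondary, purely bookkeeping, difficulty is to be sure the far-out matching lands on genuine \emph{vertices} of the two paths rather than on interior points of their lifts, and that the matching index $N$ can be taken strictly larger than $j$; these are handled by Lemma~\ref{lemmaliftgeodesic}, the BCP property, and the boundedness of finite initial segments.
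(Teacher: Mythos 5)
Your proof is correct in outline but takes a genuinely different, and more roundabout, route than the paper's. The paper stays entirely in $\hat{\Gamma}$: since a conical limit point corresponds to a Gromov boundary point of the hyperbolic space $\hat{\Gamma}$, the Morse lemma at infinity gives, for the fixed $j$, an index $k_j$ with $\hat{d}(x_j,x'_{k_j})\leq C_1$; one then caps off $(x_1,\dots,x_j)$ with a short relative geodesic from $x_j$ to $x'_{k_j}$, producing a relative quasi-geodesic with the same endpoint as $(x'_1,\dots,x'_{k_j})$, and applies the finite Lemma~\ref{Proposition315Osin}. In contrast you descend to $\Cay(\Gamma,S)$, lift the two rays, appeal to transition points escaping to infinity (using conicality) and to a claimed $d$-Hausdorff fellow-traveling of transition points of geodesic rays to a conical limit point to manufacture a far-out pair $(x_N,x'_m)$ with $d(x_N,x'_m)\leq K'$, and then truncate. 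Your explanation of why conicality cannot be dropped — at a parabolic limit point the $\hat{d}$-closeness gives no $d$-control — is exactly right and is something the paper does not spell out.

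The weak point is the ``matching statement.'' You need genuine \emph{vertices} of the two quasi-geodesic rays at uniformly bounded $d$-distance, with a bound depending only on $\lambda,c,K$, and you derive this from a fellow-traveling assertion for transition points of rays to a conical limit point that you label ``standard'' and route through \cite{Hruska} and \cite{GerasimovPotyagailo}. Neither reference directly gives that statement for rays, and the cleanest way to actually prove it is to pick a far-out vertex $x_N$, find $x'_m$ with $\hat{d}(x_N,x'_m)$ bounded (Morse lemma in $\hat{\Gamma}$), cap $(x_1,\dots,x_N)$ by a short relative geodesic to $x'_m$, and invoke Lemma~\ref{Proposition315Osin} — that is, one ends up reproving the paper's argument inside your asserted lemma. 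So your approach is not wrong, but it bundles the essential step into an uncited ``standard'' fact and is strictly more work than the paper's: the paper never needs $d$-closeness of far-out points, only $\hat{d}$-closeness, and lets the finite lemma itself do the $\hat{d}$-to-$d$ upgrade. If you want to keep your route, you should state and prove the transition-point fellow-traveling claim for rays explicitly rather than cite it as folklore.
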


\begin{proof}
Consider two such $(\lambda,c)$-quasi-geodesic paths.
Convergence to a conical limit point means convergence to the Gromov boundary of $\hat{\Gamma}$.
Since $\hat{\Gamma}$ is hyperbolic, there exists $C_1$ such that for every $j$, there exists $k_j$ such that $\hat{d}(x_j,x'_{k_j})\leq C_1$.
Fix $j$ and consider $x_j$ and $x_{k_j}'$ as above.
Then, the concatenation of the relative quasi-geodesic path $(x_1,...,x_j)$ and a relative geodesic from $x_j$ to $x_{k_j}'$ is a relative $(\lambda',c')$-quasi-geodesic path with fixed parameters $\lambda',c'$ (that only depend on $C_1,\lambda,c$).
Letting $\lambda''=\max (\lambda,\lambda')$ and $c''=\max (c,c')$, we thus have two relative $(\lambda'',c'')$-quasi-geodesic paths starting at $x_1$ and $x_1'$ and ending at $x_{k_j}'$.
Applying Lemma~\ref{Proposition315Osin} to those two paths, we see that there exists $i_j'$ such that $d(x_j,x_{i_j}')\leq C$ for some $C$ that only depends on $C_1,\lambda'',c''$, thus only on $\lambda,c$.
\end{proof}

We will adapt the proof of \cite[Chapter~9,~Theorem~13]{GhysHarpe} to the relative case to prove Theorem~\ref{thmcodingrelhypgroups}.
We denote by $\Sigma$ the set $S\cup \bigcup_{\mathcal{H}\in \Omega_0}\mathcal{H}$ and by $\Sigma^*$ the complete language over the alphabet $\Sigma$, that is, the set of all finite sequences of elements of $\Sigma$.
We denote by $\overline{\sigma}=(\sigma_1,\sigma_2,...,\sigma_n)$ a sequence in $\Sigma^*$ and we define then the elements of $\Gamma$ $\gamma_1=\sigma_1$, $\gamma_2=\sigma_1\sigma_2$, ..., $\gamma_n=\sigma_1...\sigma_n$.
We denote by $\overline{\gamma}=(e,\gamma_1,...,\gamma_n)$ the sequence of elements of $\Gamma$ thus produced (notice that we added the neutral element $e$ to the sequence).
Say that $\overline{\sigma}$ is \emph{reduced} if $\overline{\gamma}$ is a relative geodesic from $e$ to $\gamma_n$.
From the definition of the $\hat{\Gamma}$, we get the following.

\begin{lemma}
Every $\gamma$ can be represented by a reduced sequence $\overline{\sigma}=(\sigma_1,\sigma_2,...,\sigma_n)$ in $\Sigma^*$, meaning that $\gamma=\sigma_1\sigma_2...\sigma_n$.
\end{lemma}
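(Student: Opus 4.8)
The plan is to build the reduced sequence recursively on the word length of $\gamma$ in the infinite generating set $\Sigma$, i.e.\ on $\hat d(e,\gamma)$. First I would observe that since $\Sigma = S \cup \bigcup_{\mathcal{H}\in\Omega_0}\mathcal{H}$ is a (possibly infinite) generating set of $\Gamma$, every $\gamma\in\Gamma$ can be written as $\gamma=\sigma_1\sigma_2\cdots\sigma_n$ with each $\sigma_i\in\Sigma$, simply because $\Sigma$ generates $\Gamma$ — this is nothing more than the statement that $\gamma$ is joined to $e$ by an edge path in $\hat\Gamma$, and the labels of the successive edges along that path give the $\sigma_i$. The point of the lemma is to pick such a representation so that the associated sequence $\overline\gamma=(e,\gamma_1,\ldots,\gamma_n)$ with $\gamma_i=\sigma_1\cdots\sigma_i$ is a \emph{relative geodesic}, i.e.\ realizes the distance $\hat d(e,\gamma)=n$.

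The key step is therefore: take a geodesic in $\hat\Gamma$ from $e$ to $\gamma$. By definition of $\hat\Gamma$ as a Cayley graph with respect to the generating set $S\cup\bigcup_{\mathcal{H}\in\Omega_0}\mathcal{H}$, this geodesic is an edge path $(e=\gamma_0,\gamma_1,\ldots,\gamma_n=\gamma)$ in which each consecutive pair $(\gamma_{i-1},\gamma_i)$ is joined by an edge, hence $\gamma_{i-1}^{-1}\gamma_i\in \Sigma$; set $\sigma_i:=\gamma_{i-1}^{-1}\gamma_i$. Then $\overline\sigma=(\sigma_1,\ldots,\sigma_n)\in\Sigma^*$, we have $\gamma=\sigma_1\cdots\sigma_n$, and $\overline\gamma=(e,\gamma_1,\ldots,\gamma_n)$ is by construction a geodesic path of $\hat\Gamma$ from $e$ to $\gamma$, i.e.\ a relative geodesic. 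Thus $\overline\sigma$ is reduced and represents $\gamma$, which is exactly the assertion.

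There is essentially no obstacle here: the lemma is a tautological unpacking of the definitions of $\hat\Gamma$ and of ``reduced''. The only mild point worth spelling out is that a relative geodesic in the sense used in this paper (a path $(x_1,\ldots,x_n)$ in $\hat\Gamma$ of minimal $\hat d$-length between its endpoints) is precisely the same thing as a geodesic edge path in the graph $\hat\Gamma$, so that producing a reduced sequence for $\gamma$ amounts to producing one geodesic edge path from $e$ to $\gamma$ in $\hat\Gamma$, which exists because $\hat\Gamma$ is a connected graph. One could equally phrase the argument inductively: if $\hat d(e,\gamma)=n\geq 1$, pick a neighbor $\gamma'$ of $\gamma$ in $\hat\Gamma$ with $\hat d(e,\gamma')=n-1$, apply the inductive hypothesis to $\gamma'$ to get a reduced $(\sigma_1,\ldots,\sigma_{n-1})$, and append $\sigma_n:=\gamma'^{-1}\gamma\in\Sigma$; the resulting sequence is reduced since its length $n$ equals $\hat d(e,\gamma)$.
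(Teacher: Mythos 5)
Your proof is correct and matches the paper's (implicit) reasoning: the paper gives no separate argument, merely noting the lemma follows "from the definition of $\hat{\Gamma}$," which is exactly the observation you spell out — a geodesic edge path from $e$ to $\gamma$ in the connected graph $\hat{\Gamma}$ yields, via its edge labels, a reduced sequence representing $\gamma$.
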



However, such a sequence is not unique in general.
We will first construct an automaton that will recognize every reduced sequence.
This will not prove the theorem, since the map $\phi$ will not be bijection, but we will then modify the automaton to have this property.
Denote by $\mathcal{R}\subset \Sigma^*$ the subset of reduced sequences.

\begin{definition}
We will say that two reduced sequences $\overline{\sigma}$ and $\overline{\sigma}'$ have the same relative cone-type if for every reduced sequence $\overline{\sigma}''$, the concatenation of $\overline{\sigma}$ and $\overline{\sigma}''$ is again reduced if and only if the concatenation of $\overline{\sigma}'$ and $\overline{\sigma}''$ also is reduced.
In other words, the reduced sequences extending $\overline{\sigma}$ and $\overline{\sigma}'$ to reduced sequences are the same.
\end{definition}

Having the same relative cone-type is an equivalence relation. We denote by $V_0$ the quotient set and define a graph $\mathcal{G}_0=(V_0,E_0,v_0)$ with vertex set $V_0$, distinguished vertex $v_0$ the cone-type of the empty sequence and edges set $E_0$ constructed as follows.
If $v\in V_0$, we choose a sequence $\overline{\sigma}$ with relative cone-type $v$.
For every reduced sequence $\overline{\sigma}'$ that is obtained from $\overline{\sigma}$ by adding some $\sigma\in \Sigma$, we introduce an edge $\overline{e}$ in $\mathcal{G}_0$
from $v$ to the cone-type of $\overline{\sigma}'$.
We label this edge by $\phi_0(\overline{e}):=\sigma$.
This construction does not depend on the choice of $\overline{\sigma}$.

First, we have the following result.
\begin{lemma}\label{lemmasametypeempty}
The empty sequence is the only one having its relative cone-type.
\end{lemma}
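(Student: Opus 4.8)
The statement to prove is Lemma~\ref{lemmasametypeempty}: the empty sequence is the only one whose relative cone-type equals $v_0$. The plan is to argue by contradiction, using the bijection-to-come heuristically but more concretely exploiting the \emph{length-monotonicity} of reduced sequences together with the BCP property. Suppose $\overline{\sigma} = (\sigma_1,\dots,\sigma_n)$ is a nonempty reduced sequence with the same relative cone-type as the empty sequence. Let $\gamma_n = \sigma_1\cdots\sigma_n \neq e$ be the associated group element; since $\overline{\sigma}$ is reduced, $\overline{\gamma} = (e,\gamma_1,\dots,\gamma_n)$ is a relative geodesic, so $\hat{d}(e,\gamma_n) = n \geq 1$.

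The key point is that having the relative cone-type of the empty sequence means: \emph{every} reduced sequence $\overline{\sigma}''$ remains reduced when concatenated after $\overline{\sigma}$ (because this holds trivially when concatenated after the empty sequence). First I would observe that, in particular, one may concatenate $\overline{\sigma}$ with a reduced representative of $\gamma_n^{-1}$: choose any reduced sequence $\overline{\tau}$ representing $\gamma_n^{-1}$, which exists by the lemma preceding the cone-type definition. Then the concatenation $\overline{\sigma}\,\overline{\tau}$ is reduced, hence $(e,\gamma_1,\dots,\gamma_n,\dots, \gamma_n\gamma_n^{-1}) = (e,\dots,\gamma_n,\dots,e)$ is a relative geodesic from $e$ to $e$. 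But a relative geodesic from $e$ to $e$ has length $\hat{d}(e,e) = 0$, so it must be the trivial (empty) path; this forces $n + \hat{d}(e,\gamma_n^{-1}) = 0$, contradicting $n \geq 1$.

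The main obstacle — and the step requiring care — is confirming that concatenation of two reduced sequences, each individually geodesic in $\hat{\Gamma}$, need not itself be geodesic, so one cannot simply concatenate $\overline{\sigma}$ with an \emph{arbitrary} reduced sequence and expect reducedness; the definition of relative cone-type only promises reducedness of \emph{some} concatenations. However, for the element $\overline{\tau}$ representing $\gamma_n^{-1}$ this is not an issue, because the cone-type hypothesis says reducedness of $\overline{\sigma}\,\overline{\sigma}''$ is equivalent to reducedness of $\overline{\sigma}''$ alone (the empty-sequence side), and $\overline{\tau}$ \emph{is} reduced by construction. So the hypothesis directly applies. One should double-check the degenerate edge case: the graph $\mathcal{G}_0$ is allowed a trivial self-loop at $v_0$ (an edge labeled by $e \in S$ or by $e \in \mathcal{H}$), which is why the empty-sequence cone-type is singled out; the argument above is unaffected since it only uses that a genuine nonempty reduced sequence would create a length-$0$ relative geodesic between distinct-length endpoints, which is impossible. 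I would close by remarking that this lemma is exactly what guarantees, in the subsequent construction, that no edge of $\mathcal{G}_0$ (other than the trivial loop) terminates at $v_*$, matching the first bullet of Definition~\ref{definitionautomaticstructure}.
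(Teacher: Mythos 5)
Your proof is correct and is a close variant of the paper's argument; both rest on exhibiting a reduced sequence that trivially extends the empty sequence but cannot extend a nonempty $\overline{\sigma}$. The paper's witness is minimal: it takes the single-letter sequence $(\sigma_n^{-1})$, whose concatenation with $\overline{\sigma}$ would give a path of length $n+1$ ending at $\gamma_{n-1}$, a point at relative distance $n-1$ from $e$, so the concatenation cannot be a relative geodesic. You instead take a full reduced representative $\overline{\tau}$ of $\gamma_n^{-1}$ and reach a contradiction from a supposed positive-length relative geodesic from $e$ to $e$. Both arguments are valid; the paper's is slightly more economical, while yours has the (minor) advantage of not actually invoking the symmetry of $S$ — a reduced representative of $\gamma_n^{-1}$ exists regardless, whereas the paper's witness needs $\sigma_n^{-1}\in\Sigma$. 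Your aside about a trivial self-loop at $v_0$ is a bit of a red herring: in $\mathcal{G}_0$ no edge is labelled by $e$ (appending $e$ to a reduced sequence never yields a relative geodesic), and the "trivial edge" at $v_*$ in Definition~\ref{definitionautomaticstructure} is just a formal device for the bijection; your core argument is unaffected.
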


\begin{proof}
For any $\sigma\in \Sigma\setminus \{e\}$, the sequence only consisting of $\sigma$ extends the empty sequence.
On the contrary, if $\overline{\sigma}=(\sigma_1,...,\sigma_n)$, the sequence $(\sigma_1,...,\sigma_n,\sigma_n^{-1})$ is not reduced (and $\sigma_n^{-1}$ does lie in $\Sigma$ since the generating set $S$ is assumed to be symmetric).
Thus, the sequence only consisting of $\sigma_n^{-1}$ does not extend $\overline{\sigma}$.
\end{proof}

We also have the following proposition, which is basically a consequence of the BCP property, although the actual proof is a bit involved.
See also \cite[Lemma~B.1]{Yang2}
for a similar statement in a different setting.

\begin{proposition}\label{finiteconetypes}
There is only a finite number of different cone-types of sequences. In other words, the set $V_0$ of vertices of $\mathcal{G}_0$ is finite.
\end{proposition}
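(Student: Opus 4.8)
The plan is to characterize the relative cone-type of a reduced sequence $\overline{\sigma}$ by a bounded amount of data, so that only finitely many cone-types can occur. The intuition, exactly as in Cannon's argument for hyperbolic groups, is that whether a reduced sequence $\overline{\sigma}'$ can be appended to $\overline{\sigma}$ to form a reduced sequence should only depend on the ``local picture'' near the endpoint $\gamma_n$ of $\overline{\sigma}$, not on the whole of $\overline{\sigma}$. The subtlety in the relative setting is that ``local picture'' has to be measured in $\Cay(\Gamma,S)$, and one must take into account whether $\gamma_n$ lies deep inside a coset of a peripheral subgroup.

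First I would fix a ball radius $L$ (to be determined by the constants coming from Lemmas~\ref{Theorem52AntolinCiobanu}, \ref{Proposition315Osin} and \ref{lemmaliftgeodesic}, together with the BCP constant $C_{\lambda,c}$) and, for a reduced sequence $\overline{\sigma}$ ending at $\gamma_n$, record the following finite data: (i) the isomorphism type of the labelled ball $B_{\hat d}(\gamma_n, L_1)$ in $\hat\Gamma$ for a suitable $L_1$, translated back so that $\gamma_n$ is the origin; equivalently, which elements $\sigma\in\Sigma$ extend $\overline\sigma$ by one letter to a reduced sequence, and more generally which short sequences $\overline\tau$ over $\Sigma$ of relative $\hat d$-length $\le L_1$ extend $\overline\sigma$ to a reduced sequence; and (ii) if $\gamma_n$ is in (or within bounded distance of) a coset $\gamma_0\mathcal H$ that the lifted path $\widetilde{\overline\gamma}$ is currently penetrating, the element of $\mathcal H$ (up to bounded ambiguity) by which the path has already traveled into that coset — or rather a coarse version of it, namely the ``exit data'' recorded in $\Cay(\Gamma,S)$ up to the constant $C_{\lambda,c}$. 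The claim is that two reduced sequences with the same such data have the same relative cone-type.

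To prove the claim, suppose $\overline\sigma$ and $\overline\sigma'$ carry the same data and let $\overline\sigma''$ be a reduced sequence such that the concatenation $\overline\sigma\cdot\overline\sigma''$ is reduced; I must show $\overline\sigma'\cdot\overline\sigma''$ is reduced. If $\overline\sigma'\cdot\overline\sigma''$ were not a relative geodesic, then by Lemma~\ref{Theorem52AntolinCiobanu} it would either fail to be a $2$-local relative geodesic at the junction — but the $2$-local (indeed $L_1$-local) behaviour at the junction is precisely part of the recorded data, so this coincides with the behaviour of $\overline\sigma\cdot\overline\sigma''$, a contradiction — or it would contain a forbidden non-geodesic subword $\overline s\in\mathcal{NG}$; since $\mathcal{NG}$ is finite and $\overline\sigma$, $\overline\sigma'$ are themselves reduced, such a subword must straddle the junction and hence lie within the $L_1$-ball, again controlled by the data. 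The remaining and genuinely relative point is that even a $2$-local relative geodesic avoiding $\mathcal{NG}$ need only be a $(\lambda,c)$-quasi-geodesic, so to upgrade to ``reduced = relative geodesic'' I would invoke Lemma~\ref{Proposition315Osin} comparing $\overline\sigma'\cdot\overline\sigma''$ with an actual relative geodesic $\beta$ between its endpoints: the entrance/exit points of parabolic cosets along the quasi-geodesic are $C$-tracked by $\beta$, and since $\overline\sigma\cdot\overline\sigma''$ (which agrees with $\overline\sigma'\cdot\overline\sigma''$ near the junction and equals it on the $\overline\sigma''$ part) \emph{is} a relative geodesic, a length comparison using the BCP property forces $\overline\sigma'\cdot\overline\sigma''$ to have the same $\hat d$-length as $\beta$, i.e.\ to be reduced. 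Here item (ii) of the data is what guarantees that the coset the path is penetrating at the junction, together with how far it has already penetrated, matches for $\overline\sigma$ and $\overline\sigma'$, so that BCP clause (2) can be applied uniformly.

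The main obstacle, I expect, is precisely this bookkeeping of partially-traversed peripheral cosets: in the hyperbolic case a cone-type is literally determined by a ball in the Cayley graph, but here a reduced sequence may enter a coset $\gamma_0\mathcal H$ and sit arbitrarily deep inside it, so the naive ``ball around $\gamma_n$'' is not translation-bounded data. The fix is that by Lemma~\ref{lemmaliftgeodesic} the lift is a quasi-geodesic whose marked points are transition points, so only the coarse geometry of the \emph{last} coset being penetrated matters, and that is finite data up to $C_{\lambda,c}$; making this precise — in particular checking that ``how far we have penetrated the current coset'' is recorded correctly and interacts well with BCP clause (1) when $\overline\sigma''$ causes the path to travel more than $C_{\lambda,c}$ further in that same coset — is the technical heart of the argument. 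With the claim in hand, finiteness of $V_0$ is immediate: there are finitely many isomorphism types of labelled $L_1$-balls in $\hat\Gamma$ (the group is vertex-transitive on $\hat\Gamma$ and balls are finite since $\hat d$-balls are finite) and finitely many choices for the bounded coset-penetration data, so only finitely many cone-types arise.
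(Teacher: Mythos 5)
There is a genuine gap in the finiteness claim, which is the whole point of the proposition. You propose to record, as part of the cone-type data, the ``isomorphism type of the labelled ball $B_{\hat d}(\gamma_n,L_1)$ in $\hat\Gamma$'' (equivalently, which sequences $\overline\tau$ of $\hat d$-length at most $L_1$ extend $\overline\sigma$ to a reduced word), and you justify finiteness at the end by asserting that ``$\hat d$-balls are finite.'' This is false when the peripheral subgroups are infinite: the $\hat d$-ball of radius $1$ already contains $\bigcup_{\mathcal H\in\Omega_0}\mathcal H$, and the label alphabet $\Sigma$ is itself infinite. So the data in your item (i) a priori lives in an infinite set, and the concluding pigeonhole argument does not go through as written. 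Your item (ii) gestures at the relevant issue (deep penetration into a coset) but stays vague -- ``the element of $\mathcal H$ up to bounded ambiguity'' is not itself finite data, and you never pin down a finite replacement.

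What the paper does instead, and what your argument is missing, is to build the finite data out of the ball $B_C$ in the \emph{word metric} $\Cay(\Gamma,S)$, not in $\hat\Gamma$: it records the function $\hat\rho_C(\gamma_n):g\in B_C\mapsto\hat d(e,\gamma_ng)-\hat d(e,\gamma_n)$, which has finite domain and takes integer values bounded by $C$, hence lies in a genuinely finite set. The price for using the word-metric ball is that arbitrarily long peripheral jumps are invisible to $B_C$, and this is handled by two preliminary lemmas (Lemmas~\ref{relativegeodesiclongdistances1} and~\ref{relativegeodesiclongdistances2}): any two sufficiently $d$-long jumps into the same parabolic coset behave identically for the purpose of extending to a relative geodesic, and a relative geodesic ending with a long peripheral jump can be freely concatenated with one beginning with that jump. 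These lemmas, together with a third comparison lemma, are precisely what makes a finite word-metric window sufficient. Your proof sketch does invoke BCP and the fellow-travelling lemmas in roughly the right places, but without identifying a correct finite encoding, the reduction to a pigeonhole argument fails.
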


The proposition will be proved in several steps.
The first lemma states that concatenating a relative geodesic that ends with a long word in a parabolic subgroup with another relative geodesic which starts with the same word again yields a relative geodesic.

\begin{lemma}\label{relativegeodesiclongdistances1}
There exists $C_0\geq 0$ such that the following holds.
Let $y\in \Sigma$ with $d(e,y)\geq C_0$.
Let $\alpha=(x_0,...,x_n,x_ny)$ and $\beta=(y,z_1,...,z_m)$ be two relative geodesic paths whose first and last jumps respectively are $y$.
Then, the concatenation of $\alpha$ and $\beta$ $(x_0,x_1,x_2,...,x_n,x_ny,x_nz_1,...,x_nz_m)$ is a relative geodesic.
\end{lemma}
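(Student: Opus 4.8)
The plan is to deduce the statement from a single application of the bounded coset penetration property, once we know that the concatenated path
$$P=(x_0,\dots,x_n,x_ny,x_nz_1,\dots,x_nz_m)$$
is a well-behaved relative quasi-geodesic. First I would fix $C_0$. Let $\lambda,c$ and $\mathcal{NG}$ be the constants and the finite set of non-geodesic sequences furnished by Lemma~\ref{Theorem52AntolinCiobanu}, and let $C_{\lambda,c}$ be the corresponding BCP constant; take $C_0$ strictly larger than $C_{\lambda,c}$, than $\max_{s\in S}d(e,s)$, and than $d(e,\sigma)$ for every letter $\sigma$ occurring in a sequence of $\mathcal{NG}$. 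The hypothesis $d(e,y)\geq C_0$ then forces $y$ to lie in some peripheral subgroup $\mathcal{H}\in\Omega_0$, so the single edge $x_n\to x_ny$ of $P$ is an $\mathcal{H}$-component, and it travels $d(x_n,x_ny)=d(e,y)\geq C_0$ in the coset $x_n\mathcal{H}$. Note also, for later use, that since $\alpha$ is a relative geodesic no vertex $x_i$ with $i<n$ lies in $x_n\mathcal{H}$ (otherwise there would be an edge from $x_i$ to $x_ny$, contradicting $\hat d(x_i,x_ny)=n-i+1\geq 2$), and similarly no $x_nz_j$ with $j\geq 1$ lies in $x_n\mathcal{H}$.

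Next I would check that $P$ is a relative $(\lambda,c)$-quasi-geodesic, via Lemma~\ref{Theorem52AntolinCiobanu}. It is a $2$-local relative geodesic for free: each length-$2$ subpath of $P$ lies in $\alpha$ or in the translate $x_n\beta=(x_n,x_ny,x_nz_1,\dots,x_nz_m)$ — in particular the two subpaths straddling the junction, $(x_{n-1},x_n,x_ny)$ and $(x_n,x_ny,x_nz_1)$, are subpaths of $\alpha$ and of $x_n\beta$ respectively — hence is a relative geodesic. And $P$ contains no sequence of $\mathcal{NG}$: a subpath avoiding the edge $x_n\to x_ny$ lies in $\alpha$ or in $x_n\beta$, hence is a relative geodesic and so is not a non-geodesic sequence, while a subpath using that edge has $y$ among its labels, which by the choice of $C_0$ cannot appear in $\mathcal{NG}$.

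I would then argue that $P$ is without backtracking. The coset $x_n\mathcal{H}$ is met by $P$ only along the edge $x_n\to x_ny$ (using the two facts recorded above together with the fact that $\alpha$ and $x_n\beta$ are relative geodesics and hence without backtracking), so if $P$ backtracked it would be in some other coset $c\mathcal{K}$ met on both sides of the junction: some $x_a$ with $a<n$ and some $x_nz_b$ with $b\geq 1$ lie in $c\mathcal{K}$, with $x_n\notin c\mathcal{K}$. Passing to a minimal such configuration one may assume the subpath $Q=(x_a,\dots,x_n,x_ny,\dots,x_nz_b)$ of $P$ is itself without backtracking; it is a relative $(\lambda,c)$-quasi-geodesic, and the single edge $(x_a,x_nz_b)$ — legitimate since $x_a^{-1}x_nz_b\in\mathcal{K}$ — is a relative geodesic with the same endpoints. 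As $Q$ travels more than $C_{\lambda,c}$ in $x_n\mathcal{H}$, the BCP property forces the path $(x_a,x_nz_b)$ to enter $x_n\mathcal{H}$; but its only vertices are $x_a$ and $x_nz_b$, and $x_a\notin x_n\mathcal{H}$, a contradiction. I expect this without-backtracking step to be the main obstacle, both for the bookkeeping it requires and because it is precisely here that the lower bound $d(e,y)\geq C_0$ must be exploited.

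Finally, suppose $P$ were not a relative geodesic and let $\gamma$ be a relative geodesic from $x_0$ to $x_nz_m$, without backtracking, with $|\gamma|=\hat d(x_0,x_nz_m)<n+m+1$. Applying the BCP property to the two without-backtracking relative $(\lambda,c)$-quasi-geodesics $P$ and $\gamma$, which share their endpoints, and using that $P$ travels more than $C_{\lambda,c}$ in $x_n\mathcal{H}$, we get that $\gamma$ enters $x_n\mathcal{H}$; since $\gamma$ is a relative geodesic its $\mathcal{H}$-component there is a single edge $p\to q$, with $p,q\in x_n\mathcal{H}$ and $\hat d(p,q)=1$. Now $\hat d(x_0,h)\geq n$ for every $h\in x_n\mathcal{H}$: if some $h$ had $\hat d(x_0,h)\leq n-1$ then $\hat d(x_0,x_ny)\leq n$ (via a jump inside the coset $x_n\mathcal{H}$), contradicting that $\alpha$ has length $n+1$; symmetrically $\hat d(x_nz_m,h)\geq m$ for every $h\in x_n\mathcal{H}$, using that $x_n\beta$ has length $m+1$. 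Hence
$$|\gamma|=\hat d(x_0,p)+\hat d(p,q)+\hat d(q,x_nz_m)\geq n+1+m,$$
contradicting $|\gamma|<n+m+1$. Therefore $P$ is a relative geodesic, which is the claim.
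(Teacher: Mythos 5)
Your proof follows essentially the same route as the paper's: establish that the concatenated path is a relative $(\lambda,c)$-quasi-geodesic via Lemma~\ref{Theorem52AntolinCiobanu}, and then use the BCP property together with $d(e,y)\geq C_0$ to force any competing relative geodesic to enter $x_n\mathcal{H}$, whence a length count gives a contradiction. You are in fact more careful on two points that the paper elides: you enlarge $C_0$ beyond the lengths of the letters appearing in $\mathcal{NG}$ so that Lemma~\ref{Theorem52AntolinCiobanu} genuinely applies, and you explicitly check that the concatenation is without backtracking before invoking BCP. Your closing estimate ($\hat d(x_0,h)\geq n$ for every $h\in x_n\mathcal{H}$, and symmetrically) is a cleaner reformulation of the paper's decomposition of the competing geodesic at $\tilde x_n$ and $\tilde y$. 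The one loose end is in the without-backtracking step: the assertion $x_n\notin c\mathcal{K}$ is stated without justification (it does hold, because $\hat d(x_n,x_nz_b)=1$ would contradict the subpath of $x_n\beta$ from $x_n$ to $x_nz_b$ being a geodesic of length $b+1\geq 2$), and the phrase ``passing to a minimal such configuration'' glosses over the case where several distinct cosets witness a backtracking with interleaved windows; making $Q$ genuinely without backtracking there takes a bit more bookkeeping than is given. The core BCP contradiction you derive is correct, and overall your write-up is a tightening of the paper's argument rather than a different one.
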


\begin{proof}
If $d(e,y)$ is larger than the maximum of $d(e,s)$ for $s\in S$, we know that $y$ lies in some parabolic subgroup $\mathcal{H}$.
Since $(x_0,...,x_n,x_ny)$ and $(y,z_1,...,z_m)$ are relative geodesics, we can apply Lemma~\ref{Theorem52AntolinCiobanu} to $(x_0,x_1,x_2,...,x_n,x_ny,x_nz_1,...,x_nz_m)$.
Thus, it is a relative $(\lambda,c)$-quasi geodesic path with fixed parameters $\lambda,c$.

Let $C_0$ be larger than the constant $C_{\lambda,c}$ in the BCP property for $\lambda$ and $c$.
Then, any relative geodesic from $x_0$ to $x_nz_m$ has to pass through the coset $x_n\mathcal{H}$, precisely through some points $\tilde{x}_n$ and $\tilde{y}$
with $d(x_n,\tilde{x}_n)\leq C_{\lambda,c}$ and $d(x_ny,\tilde{y})\leq C_{\lambda,c}$.
If $C_0$ is large enough, then one necessarily has $\tilde{x}_n\neq \tilde{y}$.
Consider such a relative geodesic.

Assume by contradiction that $(x_1,x_2,...,x_n,x_ny,x_nz_1,...,x_nz_m)$ is not a relative geodesic.
Then, the length of our relative geodesic from $x_0$ to $x_nz_m$ is smaller than $n+m+1$.
Thus, either the sub-geodesic from $x_0$ to $\tilde{x}_n$ has length smaller than $n$, or the sub-geodesic from $\tilde{y}$ to $x_nz_m$ has length smaller than $m$.
In the first case, one get a path from $x_0$ to $x_ny$ by adding one edge from $\tilde{x}_n$ to $x_ny$ (recall that they lie in the same coset) that has length smaller or equal to $n$.
This is a contradiction, since $(x_0,...,x_n,x_ny)$ is a relative geodesic.
The second case similarly leads to a contradiction.
\end{proof}

In the same spirit, we also prove the following.

\begin{lemma}\label{relativegeodesiclongdistances2}
There exists $C_1\geq 0$ such that the following holds.
Let $y$ and $y'$ be two elements of the same parabolic subgroup $\mathcal{H}\in \Omega_0$, with $d(e,y),d(e,y')\geq C_1$.
Let $(x_0,x_1,...,x_n)$ be a relative geodesic.
Then, $(x_0,...,x_n,x_ny)$ is a relative geodesic if and only if $(x_0,...,x_n,x_ny')$ also is one.
\end{lemma}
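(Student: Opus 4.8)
By symmetry between $y$ and $y'$, it suffices to prove one implication: assuming $(x_0,\dots,x_n,x_ny)$ is a relative geodesic, I would show that $(x_0,\dots,x_n,x_ny')$ is one as well. First I would dispose of two degenerate situations. If $n=0$ there is nothing to prove, since a single jump inside a coset is always a relative geodesic. If $x_{n-1}^{-1}x_n\in\mathcal H$, then $(x_{n-1},x_n,x_ny)$ is an $\mathcal H$-component of length two of $(x_0,\dots,x_n,x_ny)$, so this path is not a relative geodesic, contrary to our hypothesis; hence we may assume $n\geq 1$ and $x_{n-1}^{-1}x_n\notin\mathcal H$. Since relative geodesics have no backtracking and carry no $\mathcal H$-component of length $\geq 2$, this forces the relative geodesic $(x_0,\dots,x_n,x_ny)$ to meet the coset $x_n\mathcal H$ exactly once, along the single $\mathcal H$-component $(x_n,x_ny)$, with entering point $x_n$ and exit point $x_ny$; in particular it travels $d(x_n,x_ny)=d(e,y)\geq C_1$ in $x_n\mathcal H$.

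Next, I would argue by contradiction: suppose $(x_0,\dots,x_n,x_ny')$ is \emph{not} a relative geodesic, and pick a relative geodesic $\beta=(\beta_0,\dots,\beta_k)$ from $x_0$ to $x_ny'$, so that $k=\hat d(x_0,x_ny')\leq n$. Appending to $\beta$ the edge labelled $(y')^{-1}y\in\mathcal H$ produces a path $\beta^+=(\beta_0,\dots,\beta_k,x_ny)$, with $\beta_k=x_ny'$, of length $k+1\leq n+1$ from $x_0$ to $x_ny$. But $(x_0,\dots,x_n,x_ny)$ being a relative geodesic forces $\hat d(x_0,x_ny)=n+1$, whence $k=n$ and $\beta^+$ is itself a relative geodesic of length $n+1$ from $x_0$ to $x_ny$. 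Since $\beta^+$ ends at $x_ny\in x_n\mathcal H$ through the $\mathcal H$-edge $x_ny'\to x_ny$ and cannot contain an $\mathcal H$-component of length $\geq 2$, we must have $\beta_{n-1}\notin x_n\mathcal H$; combined with the absence of backtracking this shows that $\beta^+$ enters $x_n\mathcal H$ exactly once, precisely at the point $x_ny'$.

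To conclude, I would apply the BCP property to the two relative $(1,0)$-quasi-geodesic paths without backtracking $\alpha:=(x_0,\dots,x_n,x_ny)$ and $\beta^+$, which share the endpoints $x_0$ and $x_ny$. Fix $C_1$ strictly larger than the BCP constant $C_{1,0}$. Since $\alpha$ travels $d(e,y)\geq C_1>C_{1,0}$ inside $x_n\mathcal H$, the first clause of the BCP property forces $\beta^+$ to enter $x_n\mathcal H$, and the second clause forces the two entering points to be $C_{1,0}$-close in $\Cay(\Gamma,S)$, i.e.\ $d(x_n,x_ny')=d(e,y')\leq C_{1,0}$. This contradicts $d(e,y')\geq C_1>C_{1,0}$. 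Therefore $(x_0,\dots,x_n,x_ny')$ is a relative geodesic, and exchanging the roles of $y$ and $y'$ gives the stated equivalence.

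The step I expect to require the most care is the bookkeeping of $\mathcal H$-components: locating exactly the entering points of $\alpha$ and of $\beta^+$ in the coset $x_n\mathcal H$ — which is what makes the BCP property directly applicable — relies on carefully combining the no-backtracking property with the fact that relative geodesics contain no $\mathcal H$-component of length at least two. The rest of the argument is routine.
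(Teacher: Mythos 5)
Your proof is correct, and it takes a genuinely different route from the paper's. The paper applies the BCP property to the pair consisting of the perturbed path $(x_0,\dots,x_n,x_ny')$ — observed to be a relative $(\lambda,c)$-quasi-geodesic with fixed parameters, since it differs from the relative geodesic $(x_0,\dots,x_n,x_ny)$ only in its last vertex at $\hat d$-distance $1$ — and a relative geodesic $\beta$ from $x_0$ to $x_ny'$; it deduces that $\beta$ passes through a point $\tilde x_n$ of the coset $x_n\mathcal H$ close to $x_n$, and then builds a too-short path from $x_0$ to $x_ny$ by following $\beta$ to $\tilde x_n$ and adding one $\mathcal H$-edge to $x_ny$. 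You instead append the edge $(y')^{-1}y$ to $\beta$ to obtain $\beta^+$, prove that $\beta^+$ is a \emph{relative geodesic} sharing both endpoints $x_0,x_ny$ with $\alpha$, and apply the BCP property to this pair of bona fide relative geodesics; the contradiction then drops out directly from the clause bounding the distance between entering points, which forces $d(e,y')\leq C_{1,0}$. Your version avoids having to produce and track quasi-geodesicity constants (you only ever invoke BCP for relative geodesics, i.e.\ $(\lambda,c)=(1,0)$), and the extension trick neatly bypasses the shortcut construction; the cost is the extra bookkeeping you flag, namely verifying that $\beta^+$ is indeed a relative geodesic and pinning down its unique entry point into $x_n\mathcal H$. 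One small point you should make explicit, which the paper also glosses over: $C_1$ must in addition be taken larger than $\max_{s\in S}d(e,s)$, so that the jumps $y,y'$ cannot be read as $S$-letters and the edges $x_n\to x_ny$, $x_ny'\to x_ny$ are genuinely $\mathcal H$-components. Both approaches are sound; yours is arguably the more self-contained of the two.
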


\begin{proof}
Assume that $(x_0,...,x_n,x_ny)$ is a relative geodesic.
Then, since $\hat{d}(y,y')=1$, $(x_0,x_1,...,x_n,x_ny')$ is a $(\lambda,c)$-quasi geodesic path for some fixed $\lambda$ and $c$.
Let $C_1$ be larger than the constant $C_{\lambda,c}$ in the BCP property.
Assume by contradiction that $(x_0,...,x_n,x_ny')$ is not a relative geodesic
and take a relative geodesic from $x_0$ to $x_ny'$.
According to the BCP property, it passes through some point $\tilde{x}_n$ in the coset $x_n\mathcal{H}$, with $d(x_n,\tilde{x}_n)\leq C_1$.
In particular, $\tilde{x}_n\neq x_ny$.
Moreover, this geodesic has length smaller than $n+1$, so that its sub-geodesic from $e$ to $\tilde{x}_n$ has length smaller than $n$.
Thus, adding a vertex between $\tilde{x}_n$ and $x_ny$ (recall that they lie in the same coset) gives a path from $x_0$ to $x_ny$ of length smaller than $n+1$, which is a contradiction.
\end{proof}

For a fixed constant $C\geq 0$, denote by $B_C$ the ball of radius $C$ and center $e$ in the Cayley graph endowed with the word distance for the generating set $S$.
Define then, for $\gamma\in \Gamma$,
$$\hat{\rho}_C(\gamma):g\in B_C\mapsto \hat{d}(e,\gamma g)-\hat{d}(e,\gamma)\in \R.$$
We insist on the fact that we use the ball in the Cayley graph and not the relative ball in $\hat{\Gamma}$ in the definition of $\hat{\rho}$.
In particular, $B_C$ is finite, this will be a crucial point in the proof of Proposition~\ref{finiteconetypes}.

\begin{lemma}\label{relativegeodesiclongdistances3}
There exist $n_0\geq 0$ and $C_2\geq 0$ such that the following holds.
Let $(x_0,...,x_m)$ and $(x_0',...,x_l')$ be relative geodesics and $(e,z_1,...,z_j)$ be another relative geodesic, with $j\geq n_0$.
Assume that both $(x_0,...,x_m,x_mz_1,...,x_mz_j)$ and $(x_0',...,x_l',x_l'z_1,...,x_l'z_{n_0})$ are relative geodesics.
Assume that setting $\gamma=x_0...x_m$ and $\gamma'=x_0'...x_l'$, we have
$\hat{\rho}_{C_2}(\gamma)=\hat{\rho}_{C_2}(\gamma')$.
Then $(x_0',...,x_l',x_l'z_1,...,x_l'z_j)$ is a relative geodesic.
\end{lemma}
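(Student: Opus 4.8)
The plan is to argue by contradiction, exploiting that the hypotheses already force the excursion by $(z_1,\dots,z_j)$ to behave geodesically for the first $n_0$ steps on the $\gamma'$--side as well. Suppose $(x_0',\dots,x_l',x_l'z_1,\dots,x_l'z_j)$ fails to be a relative geodesic, and let $k$ be the largest index with $n_0\le k<j$ such that $(x_0',\dots,x_l',x_l'z_1,\dots,x_l'z_k)$ is a relative geodesic; write $p=\gamma'z_1\cdots z_k$, so $\hat d(e,p)=l+k$ while $\hat d(e,pz_{k+1})\le l+k$. On the $\gamma$--side, the prefix $(x_0,\dots,x_m,x_mz_1,\dots,x_mz_{k+1})$ of the given relative geodesic gives $\hat d(e,\gamma z_1\cdots z_k)=m+k$ and $\hat d(e,\gamma z_1\cdots z_{k+1})=m+k+1$. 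So the step $z_{k+1}$ is geodesic on the $\gamma$--side but not on the $\gamma'$--side, even though both sides are geodesic through step $z_k$ and $\hat\rho_{C_2}(\gamma)=\hat\rho_{C_2}(\gamma')$.

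First I would choose $n_0$ larger than the lengths of all sequences in the finite set $\mathcal{NG}$ of Lemma~\ref{Theorem52AntolinCiobanu}. Then the concatenation $(x_0',\dots,x_l',x_l'z_1,\dots,x_l'z_{k+1})$ is a $2$-local relative geodesic containing no element of $\mathcal{NG}$: the only length-$2$ subpaths not already contained in a relative geodesic straddle the junction at $x_l'$, and since $k+1>n_0$ these, and any short bad subword straddling the junction, lie inside the relative geodesic $(x_0',\dots,x_l',x_l'z_1,\dots,x_l'z_{n_0})$. Hence this path is a relative $(\lambda,c)$-quasi-geodesic with the universal constants, and one checks it is without backtracking; the analogous path on the $\gamma$--side is in fact geodesic.

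Next I would case-split on the jump $z_{k+1}$ and on the word-distance $d(e,z_1\cdots z_k)$. If $z_{k+1}\in S$ and $z_1\cdots z_k$ (hence $z_1\cdots z_{k+1}$) lies in the word-ball $B_{C_2}$, then evaluating $\hat\rho_{C_2}(\gamma)=\hat\rho_{C_2}(\gamma')$ at $g=z_1\cdots z_{k+1}$ gives $\hat d(e,\gamma'z_1\cdots z_{k+1})-l=\hat d(e,\gamma z_1\cdots z_{k+1})-m=k+1$, contradicting $\hat d(e,pz_{k+1})\le l+k$; this is what forces $C_2$ to be chosen large relative to the other constants. If $z_{k+1}$ is a long element of a parabolic subgroup $\mathcal H$, I would use Lemma~\ref{relativegeodesiclongdistances2} (and Lemma~\ref{relativegeodesiclongdistances1} when the preceding step is itself a long parabolic jump) to replace $z_{k+1}$ by a shortest-possible long element of $\mathcal H$, reducing to a comparison point in $B_{C_2}$ whenever $z_1\cdots z_k$ is. The remaining case is when $d(e,z_1\cdots z_k)$ is too large for $B_{C_2}$ to reach — which, by the quasi-geodesic part of Lemma~\ref{lemmaliftgeodesic}, necessarily happens once $k$ is large. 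Here I would lift the relative geodesics on both sides to $(\lambda,c)$-quasi-geodesics in $\Cay(\Gamma,S)$, use Lemma~\ref{lemmaliftgeodesic} to see their distinguished points as transition points, and apply the fellow-traveling estimate of Lemma~\ref{Proposition315Osin} together with Lemma~\ref{projectiontransitionpoints} to transport a "dip'' at $pz_{k+1}$ along the common suffix $(z_1,\dots,z_k)$ back to a dip at $\gamma z_1\cdots z_{k+1}$ — contradicting that that path is geodesic. Morally, one shows that whether the excursion stays geodesic beyond step $n_0$ depends only on $\hat\rho_{C_2}$ of the base point and on $(z_1,\dots,z_{n_0})$, i.e. on bounded data.

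The hard part is precisely this last case. The function $\hat\rho_{C_2}$ only records behaviour inside a fixed \emph{word}-ball, whereas the excursion $z_1\cdots z_k$ leaves every such ball (and may do so abruptly through a long parabolic jump), so it cannot be plugged in directly. The whole difficulty of the statement is to reduce this unbounded situation back down to the bounded scale at which $\hat\rho_{C_2}$ operates, and it is here that the BCP property, the control of transition points under lifting, and the fellow-traveling lemmas must all be combined.
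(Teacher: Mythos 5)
Your setup and the first case are fine, but there is a genuine gap in what you correctly identify as the hard case, and the way you propose to fill it is not how the argument actually goes. Concretely: you try to \emph{evaluate} $\hat\rho_{C_2}$ at the excursion point $z_1\cdots z_{k+1}$, and when that point has left the word-ball $B_{C_2}$ you propose to ``transport a dip'' from $\gamma' z_1\cdots z_{k+1}$ back to $\gamma z_1\cdots z_{k+1}$ by lifting to $\Cay(\Gamma,S)$ and combining BCP, transition points and fellow-travelling. That last step is the entire content of the lemma and your outline does not actually produce the contradiction: fellow-travelling controls the \emph{shape} of competing relative geodesics but by itself says nothing about $\hat d(e,\cdot)$-values at the point where the shortcut occurs, and $\hat\rho_{C_2}$ cannot be applied there because the point is simply outside the ball that $\hat\rho_{C_2}$ sees. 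The detour through the Cayley graph lift is also unnecessary; the paper's proof never leaves $\hat\Gamma$.

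The paper reverses the viewpoint and this is the key idea you are missing: rather than looking at the first place the $\gamma'$-side fails and trying to carry information outward, take an arbitrary competing relative geodesic $\alpha=(e,\alpha_1,\dots,\alpha_n)$ from $e$ all the way to the far endpoint $\gamma_0=\gamma' z_1\cdots z_j$, with $n\le l+j$, and aim to show $n\ge l+j$. Once $n_0$ is large enough, $(x_0',\dots,x_l',x_l'z_1,\dots,x_l'z_j)$ is a $k$-local relative geodesic with $k$ large, hence (local-to-global in the hyperbolic graph $\hat\Gamma$, here via \cite{CoornaertDelzantPapadopoulos}) a uniform relative quasi-geodesic, so by Lemma~\ref{Proposition315Osin} the competitor $\alpha$ passes at some $\alpha_p$ with $d(\gamma',\alpha_p)\le C$. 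Crucially, $\alpha_p$ \emph{is} inside $B_{C_2}(\gamma')$ once $C_2\ge C$, so $\hat\rho_{C_2}(\gamma)=\hat\rho_{C_2}(\gamma')$ may be applied there (not at the excursion). Translating $\alpha$ by $\gamma\gamma'^{-1}$ produces $\beta_p$ near $\gamma$ with $\hat d(e,\beta_p)=m+p-l$ and $\hat d(\beta_p,\gamma z_1\cdots z_j)=n-p$, and the triangle inequality together with the fact that the $\gamma$-side is a genuine relative geodesic of length $m+j$ yields $m+j\le (m+p-l)+(n-p)$, i.e.\ $n\ge l+j$. No case split on the type or size of $z_{k+1}$ is needed, Lemmas~\ref{relativegeodesiclongdistances1} and~\ref{relativegeodesiclongdistances2} play no role here, and the whole argument lives in $(\hat\Gamma,\hat d)$. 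I would encourage you to notice that the statement is really about comparing lengths of paths in $\hat\Gamma$, so the natural move is to pick a competitor and bound its length from below, rather than to locate the first defect and try to propagate it.
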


\begin{proof}
Let $\gamma_0=x_l'z_j$.
Consider a geodesic $\alpha=(e,\alpha_1,...\alpha_n)$ from $e$ to $\gamma_0$, of length $n\leq l+j$.
We have to prove that $n\geq l+j$.
We already know that $(x_0',...,x_l',x_l'z_1,...,x_l'z_{n_0})$ is a relative geodesic, so $(x_0',...,x_l',x_l'z_1,...,x_l'z_j)$ is a $k$-local relative geodesic, where $k$ only depends on $n_0$ and tends to infinity as $n_0$ tends to infinity.
Thus, \cite[Section~3, Theorem~1.4]{CoornaertDelzantPapadopoulos} shows that
if $n_0$ is large enough,
$(x_0',...,x_l',x_l'z_1,...,x_l'z_j)$ is a relative $(\lambda,c)$-quasi geodesic path for some $\lambda$ and $c$.
Thus, Lemma~\ref{Proposition315Osin} shows that there exists $p$ such that
$d(\gamma',\alpha_p)\leq C$, for some $C\geq 0$.

Consider multiplication on the left by $\gamma\gamma'^{-1}$. It sends $\gamma'$ to $\gamma$,
$\gamma_0$ to $x_mz_j$ and $\alpha_p$ to some $\beta_p$.
Since $\Gamma$ acts by isometries, both on its Cayley graph and on $\hat{\Gamma}$,
the relative geodesic $\alpha$ is sent to a relative geodesic and $d(\gamma,\beta_p)\leq C$.

Let $C_2\geq C$, so that if $\hat{\rho}_{C_2}(\gamma)=\hat{\rho}_{C_2}(\gamma')$,
then
$$\hat{d}(e,\alpha_p)-\hat{d}(e,\gamma')=\hat{d}(e,\beta_p)-\hat{d}(e,\gamma),$$
which can be written as
\begin{equation}\label{equationGhysHarpe1}
    \hat{d}(e,\beta_p)=m+p-l.
\end{equation}
Moreover,
$\hat{d}(\beta_p,\gamma)=\hat{d}(\alpha_p,\gamma')$ and since $\alpha$ is a relative geodesic of length $n$,
we get
\begin{equation}\label{equationGhysHarpe2}
    \hat{d}(\beta_p,\gamma)=n-p.
\end{equation}

Finally, $(x_0,...,x_m,x_mz_1,...,x_mz_j)$ is a relative geodesic, so that the triangle inequality gives
$m+j=\hat{d}(e,\gamma)\leq \hat{d}(e,\beta_p)+\hat{d}(\beta_p,\gamma)$
so that, using~(\ref{equationGhysHarpe1}) and~(\ref{equationGhysHarpe2}),
$$m+j\leq m+p-l+n-p=m+l-n,$$
which shows that $l+j\leq n$.
\end{proof}

\begin{lemma}\label{samefunctionsameconetype}
There exists $C\geq 0$ such that the following holds.
Let $\overline{\sigma}=(\sigma_1,...,\sigma_n)$ and $\overline{\sigma}'=(\sigma_1',...\sigma_m')$ be two reduced sequences. Let $\gamma=\sigma_1...\sigma_n$ and $\gamma'=\sigma_1'...\sigma_m'$ in $\Gamma$.
Assume that $\hat{\rho}_C(\gamma)=\hat{\rho}_C(\gamma')$.
Then, $\overline{\sigma}$ and $\overline{\sigma}'$ have the same relative cone-type.
\end{lemma}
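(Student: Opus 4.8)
The plan is to take $C$ much larger than all the constants produced by Lemmas~\ref{relativegeodesiclongdistances1},~\ref{relativegeodesiclongdistances2} and~\ref{relativegeodesiclongdistances3}, and then to analyse an extending reduced sequence letter by letter. First observe that the relative cone-type of a reduced sequence $\overline{\sigma}$ representing $\gamma$ depends only on $\gamma$: since $\hat{d}(e,\gamma)=n$, the concatenation of $\overline{\sigma}$ with a reduced sequence $\overline{\sigma}''=(\sigma''_1,\dots,\sigma''_k)$ is reduced exactly when $\hat{d}(e,\gamma\sigma''_1\cdots\sigma''_k)=\hat{d}(e,\gamma)+k$. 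Hence, by the symmetry between $\gamma$ and $\gamma'$, it suffices to prove: if $\hat{\rho}_C(\gamma)=\hat{\rho}_C(\gamma')$ and $\overline{\sigma}''=(\sigma''_1,\dots,\sigma''_k)$ is reduced with $\hat{d}(e,\gamma\sigma''_1\cdots\sigma''_k)=\hat{d}(e,\gamma)+k$, then $\hat{d}(e,\gamma'\sigma''_1\cdots\sigma''_k)=\hat{d}(e,\gamma')+k$. Fixing, for each $\mathcal{H}\in\Omega_0$, an element $y_{\mathcal{H}}\in\mathcal{H}$ with $d(e,y_{\mathcal{H}})\geq\max(C_0,C_1)$, and writing $L=\max_{\mathcal{H}}d(e,y_{\mathcal{H}})$, I would take $C\geq C_2+n_0\max(C_0,C_1)+L$, with $n_0,C_2$ from Lemma~\ref{relativegeodesiclongdistances3}.

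Call a letter $\sigma\in\Sigma$ \emph{short} if $d(e,\sigma)\leq\max(C_0,C_1)$ and \emph{long} otherwise; a long letter lies in a parabolic subgroup. One would first establish the reduction above when $k\leq n_0$, by scanning $\overline{\sigma}''$ from the left while keeping track of the current endpoints $\delta$ on the $\gamma$-side and $\delta'$ on the $\gamma'$-side, the fact that the two truncated extensions are relative geodesics, and the fact that $\hat{\rho}$ agrees at $\delta$ and $\delta'$ on a ball whose radius has dropped by $\max(C_0,C_1)$ per letter already consumed — so, at most $n_0$ letters being consumed, on a ball of radius at least $L$. If the next letter $\sigma''_i$ is short, it lies in that ball, so the agreement of $\hat{\rho}$ forces $\hat{d}(e,\delta'\sigma''_i)-\hat{d}(e,\delta')=\hat{d}(e,\delta\sigma''_i)-\hat{d}(e,\delta)=1$, i.e. the step stays reduced on the $\gamma'$-side, and a triangle-inequality computation (using that $\sigma''_i$ has word-length at most $\max(C_0,C_1)$) shows $\hat{\rho}$ still agrees at the new endpoints on a ball shrunk by a further $\max(C_0,C_1)$, so the scan may continue. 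If instead $\sigma''_i\in\mathcal{H}$ is long, Lemma~\ref{relativegeodesiclongdistances2} shows that extending the relative geodesic ending at $\delta$ (resp.\ $\delta'$) by $\sigma''_i$ stays reduced if and only if extending it by $y_{\mathcal{H}}$ does; since $y_{\mathcal{H}}$ lies in the surviving ball, the $\hat{\rho}$-agreement gives that this step stays reduced for $\delta'$ too. Finally, the suffix $(\sigma''_i,\dots,\sigma''_k)$ is a relative geodesic starting with the long jump $\sigma''_i$, so Lemma~\ref{relativegeodesiclongdistances1} glues it onto the relative geodesic on the $\gamma'$-side that has been built and that ends with the jump $\sigma''_i$, completing this case. (If the scan never meets a long letter it terminates on its own.)

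To remove the restriction $k\leq n_0$, suppose $k>n_0$. The prefix $(\sigma''_1,\dots,\sigma''_{n_0})$ extends $\gamma$ to a relative geodesic, being a subpath of the relative geodesic provided by the hypothesis, so the bounded case just treated shows that $(\gamma',\gamma'\sigma''_1,\dots,\gamma'\sigma''_1\cdots\sigma''_{n_0})$ is a relative geodesic. Since $C\geq C_2$ we have $\hat{\rho}_{C_2}(\gamma)=\hat{\rho}_{C_2}(\gamma')$, and $(\gamma,\gamma\sigma''_1,\dots,\gamma\sigma''_1\cdots\sigma''_k)$ is a relative geodesic; Lemma~\ref{relativegeodesiclongdistances3}, applied with $j=k$, then yields that $(\gamma',\gamma'\sigma''_1,\dots,\gamma'\sigma''_1\cdots\sigma''_k)$ is a relative geodesic, as desired. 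Together with the symmetric statement this proves that $\overline{\sigma}$ and $\overline{\sigma}'$ have the same relative cone-type, and, since $\hat{\rho}_C$ takes finitely many values, it will also give Proposition~\ref{finiteconetypes}.

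I expect the crux to be reconciling the two incompatible notions of "size" at play: a parabolic letter can have arbitrarily large word-length, hence escapes every finite ball $B_C$ and cannot be controlled directly by $\hat{\rho}_C$, which is exactly why long parabolic letters must be routed through the geometric Lemmas~\ref{relativegeodesiclongdistances1} and~\ref{relativegeodesiclongdistances2} and the fixed reference elements $y_{\mathcal{H}}$; while, independently, a relative geodesic may have arbitrarily many letters, so the left-to-right scan can only be run for a bounded number $n_0$ of steps before the tail must be handed to Lemma~\ref{relativegeodesiclongdistances3}. Choosing the constants $C_0,C_1,C_2,n_0,L$ and the final $C$ mutually consistently, and checking that the radius bookkeeping during the scan never runs out, is the delicate point; the rest is routine bookkeeping on top of the three preliminary lemmas.
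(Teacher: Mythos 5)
Your proposal is correct and follows essentially the same strategy as the paper's proof: split on whether a long parabolic letter occurs among the first $n_0$ extension letters; transfer short steps via $\hat{\rho}_C$-agreement, transfer the first long step via Lemma~\ref{relativegeodesiclongdistances2} with a bounded reference element and then glue the tail with Lemma~\ref{relativegeodesiclongdistances1}, and use Lemma~\ref{relativegeodesiclongdistances3} when no long letter occurs in the first $n_0$ positions. The only cosmetic differences are your preliminary reformulation that the relative cone-type depends only on $\gamma$, and the letter-by-letter shrinking-ball invariant — the paper gets the same thing more directly by noting that the cumulative word length of $\gamma_j''$ (and of $\gamma_{j_0-1}''\sigma$) stays below $C=n_0C_3$, so these elements all lie in $B_C$ and one can read off the $\hat d$-increments from $\hat\rho_C(\gamma)=\hat\rho_C(\gamma')$ in one step.
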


\begin{proof}
Let $C_3$ be larger than $C_0$, $C_1$ and $C_2$ from Lemmas~\ref{relativegeodesiclongdistances1},~\ref{relativegeodesiclongdistances2} and~\ref{relativegeodesiclongdistances3} and let $C=n_0C_3$.
Consider a reduced sequence $(\sigma_1'',...,\sigma_l'')$ and define as usual the points $\gamma_1''=\sigma_1''$,...,$\gamma_l''=\sigma_1''...\sigma_l''$.
Assume then that
$(e,\gamma_1,...,\gamma_n,\gamma_n\gamma_1'',...,\gamma_n\gamma_l'')$ is a relative geodesic, where $\gamma_1=\sigma_1$,...,$\gamma_n=\sigma_1...\sigma_n$.
We want to prove that
$(e,\gamma_1',...,\gamma_m',\gamma_m'\gamma_1'',...,\gamma_m'\gamma_l'')$ also is a relative geodesic.

First, if for all $j\leq n_0$, $d(\gamma_{j-1}'',\gamma_j'')\leq C_3$, then
$(e,\gamma_1',...,\gamma_m',\gamma_m'\gamma_1'',...,\gamma_m'\gamma_{n_0}'')$ is a relative geodesic, since $\hat{\rho}_C(\gamma)=\hat{\rho}_C(\gamma')$.
Thus, $(e,\gamma_1',...,\gamma_m',\gamma_m'\gamma_1'',...,\gamma_m'\gamma_l'')$ is indeed a relative geodesic, according to Lemma~\ref{relativegeodesiclongdistances3}. 

On the contrary, assume that for some $j\leq n_0$, $d(\gamma_{j-1}'',\gamma_j'')>C_3$ and let $j_0$ be the smallest of such $j$.
In particular, $(\gamma_{j_0-1}'')^{-1}\gamma_{j_0}''$ is in some parabolic subgroup, say $\mathcal{H}$.
Let $\sigma$ be in the same parabolic subgroup $\mathcal{H}$, with $C_1\leq d(e,\sigma)\leq C_3$.
We know that $(e,\gamma_1,...,\gamma_n,\gamma_n\gamma_1'',...,\gamma_n\gamma_{j_0}'')$ is a relative geodesic, so according to Lemma~\ref{relativegeodesiclongdistances2}, $(e,\gamma_1,...,\gamma_n,\gamma_n\gamma_1'',...,\gamma_n\gamma_{j_0-1}'',\gamma_n\gamma_{j_0-1}''\sigma)$ also is one.
Since $\hat{\rho}_{C}(\gamma)=\hat{\rho}_{C}(\gamma')$ and $d(e,h)\leq C_3$, $(e,\gamma_1',...,\gamma_m',\gamma_m'\gamma_1'',...,\gamma_m'\gamma_{j_0-1}'',\gamma_m'\gamma_{j_0-1}''\sigma)$ also is one.
Using again Lemma~\ref{relativegeodesiclongdistances2}, we see that $(e,\gamma_1',...,\gamma_m',\gamma_m'\gamma_1'',...,\gamma_m'\gamma_{j_0}'')$ also is a relative geodesic.

Finally, since $C_3\geq C_0$, Lemma~\ref{relativegeodesiclongdistances1} shows that $(e,\gamma_1',...,\gamma_m',\gamma_m'\gamma_1'',...,\gamma_m'\gamma_l'')$ also is a relative geodesic.
\end{proof}

Proposition~\ref{finiteconetypes} now follows from Lemma~\ref{samefunctionsameconetype}.
Indeed, since $B_C$ is finite, there is a finite number of different functions $\hat{\rho}_C(\gamma)$. \qed

\medskip
Lemma~\ref{lemmasametypeempty} shows that the graph $\mathcal{G}_0$ satisfies the first condition in Definition~\ref{definitionautomaticstructure} and it also satisfies the second and third conditions by definition.
Also, Proposition~\ref{finiteconetypes} shows that its set of vertices is finite.
As announced, we will modify $\mathcal{G}_0$ so that it also satisfies the fourth one.
We arbitrarily choose an order on the countable set $\Sigma$ and endow $\Sigma^*$ with the associated lexicographical order, that we denote by $\leq$.
We will say that a reduced sequence $\overline{\sigma}=(\sigma_1,...,\sigma_n)$ is nicely reduced if for all other reduced sequence $\overline{\sigma}'=(\sigma_1',...,\sigma_m')$ satisfying $\sigma_1...\sigma_n=\sigma_1'...\sigma_m'$ in $\Gamma$, $\overline{\sigma}\leq \overline{\sigma}'$.
We denote by $\mathcal{N}\subset \mathcal{R}$ the set of nicely reduced sequences.
The map $\overline{\sigma}=(\sigma_1,...,\sigma_n)\in \mathcal{N}\mapsto \sigma_1...\sigma_n\in \Gamma$ is now a bijection.
Our goal is thus to modify $\mathcal{G}_0$ so that the accepted language is $\mathcal{N}$ rather than $\mathcal{R}$.

\medskip
Let $\overline{\sigma}=(\sigma_1,...,\sigma_n)\in \mathcal{R}$ be reduced and let $C\geq 0$.
Let $\overline{\sigma}'=(\sigma_1',...,\sigma_n')$ be a sequence in $\Sigma^*$ with the same number of elements.
Let $\overline{\gamma}$ and $\overline{\gamma}'$ be the corresponding sequences of elements of $\Gamma$.
Say that $\overline{\sigma}'$ relatively $C$-follows $\overline{\sigma}$ if for every $1\leq k\leq n$,
$\hat{d}(\gamma_k,\gamma_k')\leq C$.
Say that $\overline{\sigma}'$ $C$-follows $\overline{\sigma}$ if for every $1\leq k\leq n$,
$d(\gamma_k,\gamma_k')\leq C$.

More generally, let $(x_1,...,x_n)$ and $(x_1',...,x_m')$ be two relative quasi-geodesic paths.
Say that they $C$-follow each other if for every $k$ with $k\leq n$ and $k\leq m$, we have
$d(x_k,x_k')\leq C$.

\begin{lemma}\label{Cfollowinggeodesics}
For every $c\geq 0$, there exists $C\geq 0$ such that the following holds.
Let $(x_1,...,x_n)$ and $(x_1',...,x_m')$ be two relative geodesics such that $x_1=x'_1$ and $d(x_n,x_m')\leq c$.
Then, $(x_1,...,x_n)$ and $(x_1',...,x_m')$ $C$-follow each other.
\end{lemma}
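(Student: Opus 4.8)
The plan is to use the fact that relative geodesics with the same endpoints in $\hat{\Gamma}$ stay uniformly close in $\hat{\Gamma}$ (because $\hat{\Gamma}$ is hyperbolic), and then to promote this closeness in $\hat{d}$ to closeness in $d$ at every vertex, using the BCP property and Lemma~\ref{Proposition315Osin}. The subtle point is that two relative geodesics can be $\hat{d}$-close (distance $1$ in $\hat{\Gamma}$) while being arbitrarily far in $d$, so the naive approach fails at vertices lying deep inside a common parabolic coset; the bounded coset penetration property is exactly what is needed to handle those vertices.

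First I would reduce to the case where $(x_1,\dots,x_n)$ and $(x_1',\dots,x_m')$ are without backtracking: any relative geodesic is automatically without backtracking (a geodesic in $\hat{\Gamma}$ cannot re-enter a coset, since the coset is a single point of $\hat{\Gamma}$... more precisely, after leaving a coset, coming back would allow a shortcut). Next, since $\hat{\Gamma}$ is $\delta$-hyperbolic and both paths are $\hat{d}$-geodesics with the same initial point and with $\hat{d}(x_n,x_m')\leq c$, standard thin-triangle / fellow-traveling estimates in $\hat{\Gamma}$ give a constant $C_1=C_1(\delta,c)$ and, for each $k$, an index $\sigma(k)$ with $\hat{d}(x_k,x_{\sigma(k)}')\leq C_1$ and $|k-\sigma(k)|$ bounded. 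So the two paths $C_1$-follow each other in the metric $\hat{d}$; it remains to upgrade this to $d$.

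Now fix $k$. I distinguish two cases. If $x_k$ is a transition point of the lifted geodesic, or more precisely if $x_k$ is within bounded $d$-distance of the entrance or exit point of a parabolic component of $(x_1,\dots,x_n)$, then Lemma~\ref{Proposition315Osin}, applied to the two relative geodesics (which share the starting point $x_1=x_1'$ and have $c$-close endpoints, hence are $(1,0)$-quasi-geodesics satisfying its hypotheses), yields an index $i_k$ with $d(x_k,x_{i_k}')\leq C_2$ for a constant $C_2$ depending only on $c$; combined with $\hat{d}(x_k,x_{\sigma(k)}')$ bounded and the fact that consecutive $d$-jumps along a relative geodesic inside a parabolic coset move it in $\hat{d}$ by $1$, one gets $|i_k-k|$ bounded and hence $d(x_k,x_k')$ bounded. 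If instead $x_k$ lies strictly between the entrance and exit point of some parabolic component of $(x_1,\dots,x_n)$ in a coset $\gamma\mathcal{H}_j$, then by the previous case the entrance point $x_a$ and exit point $x_b$ of that component satisfy $d(x_a,x_a'),d(x_b,x_b')\leq C_2$ for suitable indices; since $(x_a,\dots,x_b)$ is a single geodesic segment inside $\gamma\mathcal{H}_j$, BCP forces $(x_1',\dots,x_m')$ to enter the same coset with entrance and exit points $C_{\lambda,c}$-close to $x_a$ and $x_b$ respectively, so the index set realizing the parabolic component on the primed side has length within a bounded additive error of $b-a$; a final elementary estimate inside the coset $\gamma\mathcal{H}_j$ (comparing how far along the respective geodesic segments $x_k$ and $x_k'$ sit) bounds $d(x_k,x_k')$. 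Taking $C$ to be the maximum of all these bounds finishes the proof.

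The main obstacle I expect is the bookkeeping in the deep-parabolic case: one must be careful that the parametrizations of the two geodesics do not drift apart inside a long parabolic excursion. The resolution is that inside a coset the $\hat{d}$-metric collapses, so the only control comes from $d$, and there BCP pins down both the entrance and exit points on the two sides to within a uniform distance; since each geodesic traverses the coset by a genuine $d$-geodesic, once the endpoints of the excursion are matched up the intermediate vertices are matched up too, up to a bounded error coming from the (at most $C_{\lambda,c}$) discrepancy of the entrance points. Everything else is a routine application of hyperbolicity of $\hat{\Gamma}$ and of Lemma~\ref{Proposition315Osin}.
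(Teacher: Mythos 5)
Your plan correctly identifies the two tools the paper uses, namely hyperbolicity of $\hat{\Gamma}$ together with Lemma~\ref{Proposition315Osin} for the asynchronous fellow-traveling, and the BCP property for upgrading $\hat{d}$-closeness to $d$-closeness. However, there is a genuine gap precisely at the step from asynchronous to synchronous fellow-traveling, and your case division is built on a misreading of what the vertices of a relative geodesic are.

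First, the case division. A relative geodesic is a path in $\hat{\Gamma}$, so a parabolic component consists of exactly one edge: the ``entrance'' and ``exit'' of a coset are two \emph{consecutive} vertices $x_j,x_{j+1}$, and there are no intermediate vertices ``strictly between'' them. Your second case (a vertex deep inside a coset, with the geodesic traversing the coset by a genuine $d$-geodesic) describes the \emph{lift} of the relative geodesic into $\Cay(\Gamma,S)$, not the relative geodesic itself, and $C$-following is defined vertex-by-vertex on the relative geodesic. So the second case does not arise, and whatever work it was supposed to do has to be done inside the first case.

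Second, and this is the real gap: in your first case you argue that $d(x_k,x_{i_k}')\leq C_2$ (from Lemma~\ref{Proposition315Osin}) and $|i_k-k|$ bounded together imply $d(x_k,x_k')$ bounded. The implication ``$|i_k-k|$ bounded, hence $d(x_k,x_k')$ bounded'' is exactly what can fail in $\hat{\Gamma}$: each of the (boundedly many) steps on the primed geodesic between indices $i_k$ and $k$ has $\hat{d}$-length $1$, but could have arbitrarily large $d$-length if it is a long jump inside a parabolic coset. So being close in index does not give being close in the Cayley metric. This is precisely the point the paper's proof is organized around: assuming $l<m$ with $d(x_m,x_l')\leq C_0$, one must \emph{prove} that all the intermediate jumps $y_{l+1}',\dots,y_m'$ are bounded in $d$, and the proof is by contradiction using the BCP property and the geodesic property of both paths. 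Roughly, if $y_{l+1}'$ were a long jump in some coset $x_l'\mathcal{H}$, BCP would force $(x_1,\dots,x_n)$ to enter $x_l'\mathcal{H}$ as well, at some index $j$; a minimality argument shows $j<m$, which in turn forces the unprimed geodesic to still be deep in $x_l'\mathcal{H}$ when it reaches $x_m$, contradicting $d(x_m,x_l')\leq C_0$. That contradiction argument is the missing piece in your proposal, and without it the proof does not close.

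So: your toolbox is the right one, but the crucial control of the intermediate parabolic jumps (the step where asynchronous fellow-traveling is promoted to synchronous) is asserted rather than proved, and it is exactly the nontrivial part of the lemma.
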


\begin{proof}
Roughly speaking,
the fact that the two relative geodesics relatively $C$-follow each other is a consequence of the Morse lemma in the hyperbolic space $(\hat{\Gamma},\hat{d})$.
The fact that they actually $C$-follow themselves for the Cayley graph distance is a consequence of the bounded coset penetration property.
More precisely, Lemma~\ref{Proposition315Osin} shows that there exists $C_0$ such that
$(x_1,...,x_n)$ and $(x_1',...,x_m')$ asynchronously fellow travel, that is for every $m$, there exists $l$ such that $d(x_m,x_l')\leq C_0$.
In particular, $m\leq l+C_0$ and similarly, $l\leq m+C_0$.

Assume first that $l<m$.
Let $y_j=x_{j-1}^{-1}x_j$ and similarly $y_j'=(x_{j-1}')^{-1}x_j'$.
We prove that there exists $C_1$ such that $d(e,y_{l+1}')\leq C_1$.
Indeed if $d(e,y_{l+1}')$ is large enough, then $y_{l+1}'$ lies in some parabolic subgroup $\mathcal{H}$
and by the BCP property, the relative geodesic $(x_1,...,x_n)$ passes through the coset $x_l'\mathcal{H}$ at two points $x_j$ and $x_{j+1}$, where
$d(x_l',x_j)$ and $d(x_{l+1}',x_{j+1})$ are bounded.
One necessarily has $j<m$.
Otherwise, the path obtained by adding to $x_1',...,x_l'$ an edge from $x_l'$ to $x_j$ would yield a path of length $l+1<j+1$ from $x_1'=x_1$ to $x_{j+1}$, contradicting the fact that $(x_1,...,x_{j+1})$ is a relative geodesic.
Thus, the relative geodesic defined by $(x_1,...,x_n)$ has to travel a long time inside $x_l'\mathcal{H}$ before it reaches $x_m$,
which proves that $d(x_m,x_l')$ is arbitrarily long if $d(e,y_{l+1}')$ is arbitrarily long, contradicting the fact that it is smaller than $C_0$, hence the existence of $C_1$.

If $l=m-1$, then $d(x_m,x'_m)\leq C_0+C_1$ and we are done.
Otherwise, we similarly prove that there exists $C_2$ such that $d(e,y_{l+2}')\leq C_2$ and so on, so that we can thus prove that for every $j\leq C_0$, as long as $l+j\neq m$
$d(e,y_j')$ is bounded.
We thus get that $d(x_l',x_m')\leq C_3$ for some $C_3$, so that
$d(x_m,x_m')\leq C_0+C_3$.

If $l>m$ we prove in the same way that $d(x_m,x_m')\leq C_0+C_3$.
This concludes the proof.
\end{proof}

In particular, we have the following.

\begin{lemma}\label{Cfollowingsequences}
There exists $C$ such that if $\overline{\sigma}=(\sigma_1,...,\sigma_n)$ and $\overline{\sigma}'=(\sigma_1',...,\sigma_n')$ are two reduced sequences mapped to the same element $\gamma$ in $\Gamma$,
then $\overline{\sigma}$ $C$-follows $\overline{\sigma}'$.
\end{lemma}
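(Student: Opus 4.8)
The plan is to obtain this as an immediate consequence of Lemma~\ref{Cfollowinggeodesics}. First I would record the (almost tautological) observation that a reduced sequence is by definition a relative geodesic from $e$ to the element it represents: if $\overline{\sigma}=(\sigma_1,\dots,\sigma_n)$ and $\overline{\sigma}'=(\sigma_1',\dots,\sigma_m')$ are reduced and both map to the same $\gamma\in\Gamma$, then the associated paths $\overline{\gamma}=(e,\gamma_1,\dots,\gamma_n)$ and $\overline{\gamma}'=(e,\gamma_1',\dots,\gamma_m')$ are two relative geodesics with the same pair of endpoints $e$ and $\gamma$. In particular their lengths in $\hat{\Gamma}$ coincide, $n=m=\hat{d}(e,\gamma)$, so $\overline{\sigma}$ and $\overline{\sigma}'$ have the same number of elements and the phrase ``$\overline{\sigma}$ $C$-follows $\overline{\sigma}'$'' is meaningful.

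Next I would apply Lemma~\ref{Cfollowinggeodesics} with $c=0$ to the relative geodesics $\overline{\gamma}$ and $\overline{\gamma}'$: they start at the common point $e$ and end at the common point $\gamma$, so $d(\gamma_n,\gamma_m')=0\le c$. The lemma then yields a constant $C\ge 0$, depending only on $\Gamma$ and the generating set $S$ (and not on $\overline{\sigma}$, $\overline{\sigma}'$ or $\gamma$), such that $\overline{\gamma}$ and $\overline{\gamma}'$ $C$-follow each other, i.e.\ $d(\gamma_k,\gamma_k')\le C$ for every $1\le k\le n$. This is exactly the assertion that $\overline{\sigma}$ $C$-follows $\overline{\sigma}'$, and since $C$ was produced once and for all from $c=0$ it is uniform over all such pairs, as required.

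There is no genuine obstacle at this stage: all the work is already contained in Lemma~\ref{Cfollowinggeodesics}, whose proof is where the effort lies — it combines the Morse lemma in the hyperbolic space $(\hat{\Gamma},\hat{d})$ (which gives that the two paths relatively $C$-follow each other) with the bounded coset penetration property (which upgrades this to $C$-following for the word metric $d$, by controlling how long a relative geodesic can linger in a parabolic coset that the other one skips). The only points to be careful about in writing up Lemma~\ref{Cfollowingsequences} itself are the bookkeeping of indices and the observation that the two reduced words necessarily have equal length.
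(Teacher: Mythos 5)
Your argument is exactly what the paper intends: the lemma is stated as an immediate consequence of Lemma~\ref{Cfollowinggeodesics} (the paper just writes ``In particular, we have the following'' and gives no separate proof). Your observations that reduced sequences are relative geodesics from $e$ to $\gamma$, that the two geodesics therefore have the same length $n=\hat{d}(e,\gamma)$, and that one can take $c=0$ in Lemma~\ref{Cfollowinggeodesics} to obtain a uniform $C$, are exactly the details being elided.
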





Similarly, we can prove the following lemma.
We will not use it to prove Theorem~\ref{thmcodingrelhypgroups}
but it will have useful consequences in the following, especially in the next paper.

\begin{lemma}\label{lemmarelativetripod}
Let $\overline{\sigma}=(\sigma_1,...,\sigma_n)$ and $\overline{\sigma}'=(\sigma_1',...,\sigma_m')$ be two reduced sequences.
Let $\overline{\gamma}$ and $\overline{\gamma}'$ be the corresponding sequences of elements in $\Gamma$.
Assume that the nearest point projection of $\gamma'_m$ on the geodesic $(e,\gamma_1,...,\gamma_n)$ in $\hat{\Gamma}$ is at $\gamma_l$.
If there are several such nearest point projection, choose the closest to $\gamma_n$.
Then, any relative geodesic from $\gamma_m'$ to $\gamma_n$ passes within a bounded distance (for the distance $d$) of $\gamma_{l}$.
Moreover, if $\gamma_l\neq e$, then any relative geodesic from $e$ to $\gamma_m'$ passes within a bounded distance of $\gamma_{l-1}$.
\end{lemma}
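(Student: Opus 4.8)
The plan is to read the statement as a rough version --- valid for the Cayley metric $d$ --- of the thinness, in the hyperbolic space $(\hat\Gamma,\hat d)$, of the geodesic triangle with vertices $e$, $\gamma_n$ and $\gamma_m'$, with $\gamma_l$ playing the role of its centre. So I would first prove everything at the level of $\hat d$, where it is formal $\delta$-hyperbolic geometry, and then upgrade it to an estimate for $d$ by means of the bounded coset penetration property, exactly as in the proofs of Lemmas~\ref{Cfollowinggeodesics} and~\ref{Cfollowingsequences}.

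The first step is a broken-path argument. Write $\alpha=(e,\gamma_1,\dots,\gamma_n)$, fix a relative geodesic $\beta$ joining $\gamma_l$ and $\gamma_m'$, and set $\alpha_{[0,l]}=(e,\gamma_1,\dots,\gamma_l)$ and $\alpha_{[l,n]}=(\gamma_l,\dots,\gamma_n)$. Since $\gamma_l$ is a nearest point projection of $\gamma_m'$ onto $\alpha$ in $(\hat\Gamma,\hat d)$, a standard fact in $\delta$-hyperbolic spaces gives that the Gromov products $(e\mid\gamma_m')_{\gamma_l}$ and $(\gamma_n\mid\gamma_m')_{\gamma_l}$ are bounded by a constant depending only on the hyperbolicity constant of $\hat\Gamma$. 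It follows that the broken paths $\alpha_{[0,l]}\ast\beta$ (from $e$ to $\gamma_m'$) and $\beta\ast\alpha_{[l,n]}$ (from $\gamma_m'$ to $\gamma_n$, traversing $\beta$ from $\gamma_m'$ to $\gamma_l$) are relative $(\lambda',c')$-quasi-geodesic paths, with $\lambda'$ and $c'$ independent of $\overline\sigma$ and $\overline\sigma'$. Discarding backtracking as in the proof of Lemma~\ref{Cfollowinggeodesics}, I would moreover take these quasi-geodesics to be without backtracking, while still having a vertex within bounded $d$-distance of $\gamma_l$, respectively of $\gamma_{l-1}$.

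The second step is to compare the broken quasi-geodesics with the relative geodesics in the statement through Lemma~\ref{Proposition315Osin}. For the first assertion, $\beta\ast\alpha_{[l,n]}$ and a relative geodesic $\tau$ from $\gamma_m'$ to $\gamma_n$ have the same endpoints, so the entrance and exit points of the parabolic components of the former are matched, up to bounded $d$-distance, to points of $\tau$. If $\gamma_l$ is an entrance or exit point of a parabolic component of $\beta\ast\alpha_{[l,n]}$ --- which happens as soon as one of the two steps of $\alpha$ adjacent to $\gamma_l$ is a long step inside a coset $\gamma_0\mathcal H$ of a parabolic subgroup --- this immediately yields a point of $\tau$ within bounded $d$-distance of $\gamma_l$. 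Otherwise the vertices $\gamma_{l-1},\gamma_l,\gamma_{l+1}$ are within bounded $d$-distance of one another, both $\beta\ast\alpha_{[l,n]}$ and $\tau$ consist of $S$-steps near $\gamma_l$, so that $d$ and $\hat d$ are comparable there (using that the lifted vertices of $\tau$ are transition points, Lemma~\ref{lemmaliftgeodesic}, and that, by the BCP property, $\tau$ cannot make a long parabolic excursion near the matching point without $\beta\ast\alpha_{[l,n]}$ doing likewise); the $\hat d$-proximity of $\gamma_l$ to $\tau$ obtained in the first step then already gives $d$-proximity. The second assertion is proved in the same way, now comparing $\alpha_{[0,l]}\ast\beta$ with an arbitrary relative geodesic $\alpha'$ from $e$ to $\gamma_m'$.

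The asymmetry between the two assertions, and the hypothesis $\gamma_l\neq e$, come from the following observation: when the tripod branches through a parabolic coset $\gamma_0\mathcal H$, the path $\alpha$ \emph{enters} $\gamma_0\mathcal H$ at $\gamma_{l-1}$ and \emph{leaves} it at $\gamma_l$; the BCP property then pins any relative geodesic from $e$ to $\gamma_m'$ entering $\gamma_0\mathcal H$ to its entrance point $\gamma_{l-1}$, and any relative geodesic from $\gamma_m'$ to $\gamma_n$ entering $\gamma_0\mathcal H$ to its exit point $\gamma_l$. I expect the delicate part of the argument to be precisely this entrance-versus-exit bookkeeping carried through all the quasi-geodesics involved, together with the careful treatment of the degenerate case in which $\gamma_l$ lies in no parabolic component of $\alpha$; the $\delta$-hyperbolic input of the first step is essentially formal and the rest follows patterns already used repeatedly in this section.
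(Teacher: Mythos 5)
Your two-phase outline (first establish $\hat d$-proximity of $\gamma_l$ to the relevant relative geodesic using $\delta$-hyperbolicity of $\hat\Gamma$, then upgrade to $d$-proximity via BCP and a broken quasi-geodesic through $\gamma_l$) matches the structure of the paper's proof. The broken paths $\alpha_{[0,l]}\ast\beta$ and $\beta\ast\alpha_{[l,n]}$ play the role of the paper's concatenation $\tilde\alpha = [\gamma',\alpha_i]\ast[\alpha_i,\gamma_l]\ast[\gamma_l,\gamma]$, and the comparison with $\tau$ via Lemma~\ref{Proposition315Osin} and the BCP property is the right mechanism.

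The gap is in your second case, the one you describe as ``delicate'' and leave to ``entrance-versus-exit bookkeeping.'' The assertion that ``by the BCP property, $\tau$ cannot make a long parabolic excursion near the matching point without $\beta\ast\alpha_{[l,n]}$ doing likewise'' is precisely what needs a proof, and it does not reduce to a routine application of BCP. The danger is that a short (in $\hat d$) relative geodesic from $\gamma_l$ to its $\hat d$-shadow on $\tau$ enters a deep coset $\tilde\gamma\mathcal H$; BCP then puts $\tau$ and $\beta\ast\alpha_{[l,n]}$ into $\tilde\gamma\mathcal H$ with matched entrance/exit points, but these matched points need not be at $\gamma_l$, and one cannot conclude $d$-proximity of $\gamma_l$ to $\tau$ without a further argument. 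In the paper this is resolved by a contradiction that is the heart of the proof: if such a deep coset $\tilde\gamma\mathcal H$ obstructs $d$-proximity, then one shows that $\alpha_{[l,n]}$ itself re-enters $\tilde\gamma\mathcal H$ at some $\gamma_{l'}$ with $l'>l$, and that any relative geodesic from $\gamma_m'$ to $\gamma_l$ also enters $\tilde\gamma\mathcal H$; rerouting this geodesic to end at $\gamma_{l'}$ gives $\hat d(\gamma_m',\gamma_{l'})\le\hat d(\gamma_m',\gamma_l)$, contradicting the maximality of $l$ among nearest-point projections. Your proposal never invokes the hypothesis that $\gamma_l$ is the projection \emph{closest to $\gamma_n$} --- which is exactly the hypothesis that makes this contradiction work --- and that omission is a sign that the key step has been glossed over rather than proved. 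Until that argument (or an equivalent one using the choice of $l$) is supplied, the ``otherwise'' case is not closed.
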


Before proving this lemma, let us give a brief heuristic explanation.
Typically, a relative geodesic from $e$ to $\gamma_m'$ will roughly follow a relative geodesic from $e$ to $\gamma_n$ up to some $\gamma_{l-1}$ and then both geodesics will diverge.
Assuming they diverge in the same parabolic subgroup, which is the worst possible case,
we let $\tilde{\gamma}_l$ be the exit point of the corresponding coset.
Then, using results of \cite{Sisto2}, we see that a relative geodesic from $\gamma_m'$ to $\gamma_n$ will first go to $\tilde{\gamma}_{l}$, then will travel in the same coset up to $\gamma_l$ and will finally roughly follow the relative geodesic from $\gamma_l$ to $\gamma_m$.
This is illustrated by the following picture.

\begin{center}
\begin{tikzpicture}[scale=2.2]
\draw (-1.8,.5)--(-2,-.4);
\draw (2.2,.5)--(2,-.4);
\draw (-2,-.4)--(2,-.4);
\draw (0,-1.3)--(0,-.4);
\draw[dotted] (0,-.4)--(0,0);
\draw (0,0)--(.8,-.1);
\draw (0,0)--(1,.3);
\draw[dotted] (.8,-.1)--(.8,-.4);
\draw (.8,-.4)--(.8,-1);
\draw (1,.3)--(1,1.1);
\draw (.8,-.1)--(1,.3);
\draw (-.3,0)node{$\gamma_{l-1}$};
\draw (-.3,-1.3) node{$e$};
\draw (1.1,-.15) node{$\gamma_l$};
\draw (1.25,.3) node{$\tilde{\gamma}_l$};
\draw (1,-1) node{$\gamma_n$};
\draw (1.2,1.1) node{$\gamma'_m$};
\end{tikzpicture}
\end{center}

\begin{proof}
For simplicity, denote by $\gamma=\gamma_n$, $\gamma'=\gamma'_m$ and by $[e,\gamma]$ the relative geodesic defined by $\overline{\gamma}$ and similarly for $[e,\gamma']$.
Since $\gamma'$ projects on $[e,\gamma]$ at $\gamma_l$, any relative geodesic from $\gamma'$ to $\gamma$ roughly follows for the distance $\hat{d}$ a relative geodesic from $\gamma'$ to $\gamma_l$ and then a relative geodesic from $\gamma_l$ to $\gamma$ (this is true in any hyperbolic space, hence in $\hat{\Gamma}$, see for example \cite[Proposition~2.2]{MaherTiozzo}).
In particular, it passes within a bounded $\hat{d}$-distance of $\gamma_l$.
Moreover,
\begin{equation}\label{equationprojectiongeodesic}
    |\hat{d}(\gamma',\gamma)-[\hat{d}(\gamma',\gamma_l)+\hat{d}(\gamma_l,\gamma)]|\leq D_1,
\end{equation}
for some $D_1$.
Consider such a relative geodesic $[\gamma',\gamma]$.
Let $\alpha_1,...,\alpha_p$ be the consecutive points on $[\gamma',\gamma]$, with $\alpha_1=\gamma'$, and $\alpha_p=\gamma$.
We saw that there exists $i$ such that $\hat{d}(\alpha_i,\gamma_{l})\leq D_2$, for some $D_2$.
Consider a relative geodesic $[\alpha_i,\gamma_{l}]$ and denote by $\beta_1,...,\beta_q$ the consecutive points on this geodesic.
Then, since $\hat{d}(\alpha_i,\gamma_l)$ is bounded,~(\ref{equationprojectiongeodesic}) shows that concatenating $[\gamma',\alpha_i]$, $[\alpha_i,\gamma_{l}]$ and $[\gamma_{l},\gamma]$ yields a relative quasi-geodesic path from $\gamma'$ to $\gamma$ with bounded parameters. Denote this relative quasi geodesic by $\tilde{\alpha}$.

If $[\alpha_i,\gamma_{l}]$ does not enter any coset of a parabolic subgroup for more than some constant $D_3$ that we will choose later,then we also have $d(\alpha_i,\gamma_{l})\leq D_4$ for some $D_4$.
Otherwise, denote by $j$ the last time that $[\alpha_i,\gamma_{l}]$ enters such a coset, say $\tilde{\gamma}\mathcal{H}$, so that $d(\beta_{j+1},\gamma_{l})\leq D_4$.
Then, according to the bounded coset penetration property, the geodesic $[\gamma',\gamma]$ also enters this coset.
To deduce that the exit point is within a bounded distance of $\beta_{j+1}$, we need to show that $\tilde{\alpha}$ does not enter $\tilde{\gamma}\mathcal{H}$ after $\beta_{j+1}$.
Assume by contradiction this is the case.
Since $[\alpha_i,\gamma_l]$ is a relative geodesic, it only enters every coset at most once.
Therefore $[\gamma_l,\gamma]$ enters $\tilde{\gamma}\mathcal{H}$ and so there exsits $l'>l$ such that $\gamma_{l'}\in \tilde{\gamma}\mathcal{H}$.
Also, the concatenation of $[\gamma',\alpha_i]$ and $[\alpha_i,\gamma_{l}]$ is a relative quasi-geodesic from $\gamma'$ to $\gamma_l$, so if $D_3$ is chosen large enough, any relative geodesic from $\gamma'$ to $\gamma_l$ also needs to enter $\tilde{\gamma}\mathcal{H}$.
Replacing the edge in $\tilde{\gamma}\mathcal{H}$ of such a relative geodesic by an edge with the same origin and ending at $\gamma_{l'}$, we get a path in $\hat{\Gamma}$ satisfying that $\hat{d}(\gamma',\gamma_{l'})\leq \hat{d}(\gamma',\gamma_l)$.
This is a contradiction, since $\gamma_l$ is the projection of $\gamma'$ onto $[e,\gamma]$ which is the closest possible to $\gamma$.
Thus, $\tilde{\alpha}$ does not enter $\tilde{\gamma}\mathcal{H}$ after $\beta_{j+1}$ and according to the BCP property,
we can find $\alpha_{i'}$ such that $d(\alpha_{i'},\beta_{j+1})\leq D_5$.
Finally, $d(\alpha_{i'},\gamma_{l})\leq D_6$, which concludes the first part of the lemma.

We prove that a relative geodesic $[e,\gamma']$ from $e$ to $\gamma'$ passes within a bounded distance of $\gamma_{l-1}$ in the same way.
We know by the same reasoning as above that such a relative geodesic passes within a bounded $\hat{d}$-distance of $\gamma_l$, but we cannot guaranty that $\gamma_l$ is actually within a bounded distance of the exit point of $[e,\gamma']$ in the corresponding coset, but we necessarily have that $\gamma_{l-1}$ is a bounded distance from the entering point.
\end{proof}

Combining this with Lemma~\ref{Cfollowinggeodesics}, we get the following result.
Again, we will not use it to prove Theorem~\ref{thmcodingrelhypgroups}, but it will be useful in the following, especially in our next paper.

\begin{lemma}\label{fellowtravelling}
There exists $c\geq 0$ and $C\geq 0$ such that the following holds.
Let $(\gamma_1,...,\gamma_n)$ and $(\gamma_1',...,\gamma_m')$ be two relative geodesics, with $\gamma_n=\gamma_m'$.
Assume that the nearest point projection of $\gamma'_1$ on the geodesic $(\gamma_1,...,\gamma_n)$ in $\hat{\Gamma}$ is at $\gamma_l$.
If there are several such nearest point projection, choose the closest to $\gamma_n$.
Then, there exists $k$ and $j$, with $|k-l|\leq c$ such that for every $i$ with $j+i\leq m$ and $k+i\leq n$, we have
$d(\gamma_{k+i},\gamma_{j+i})\leq C$.
\end{lemma}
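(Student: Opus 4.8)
The plan is to combine Lemma~\ref{lemmarelativetripod} with the \emph{synchronous} fellow-travelling provided by Lemma~\ref{Cfollowinggeodesics}. The key mechanism is that Lemma~\ref{Cfollowinggeodesics} compares two relative geodesics having a \emph{common starting point} and nearby endpoints index by index; so the first thing I would do is reverse both relative geodesics (the reverse of a relative geodesic is again one, since $S$ is symmetric and the $\mathcal{H}_k$ are subgroups), so that they now start at the common point $\gamma_n=\gamma'_m$. It then remains to truncate them at two points lying close to $\gamma_l$ and to apply Lemma~\ref{Cfollowinggeodesics}.

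To locate such points, I would invoke Lemma~\ref{lemmarelativetripod} with the common point $\gamma_n$ in the role of $e$, the reversed first geodesic $(\gamma_n,\ldots,\gamma_1)$ in the role of the first reduced sequence and the reversed second geodesic $(\gamma_n,\ldots,\gamma'_1)$ in the role of the second; since projection onto a geodesic is equivariant and insensitive to orientation, the projection of $\gamma'_1$ is still $\gamma_l$, with $\gamma_{l+1}$ sitting just before it along the reversed first geodesic (on the $\gamma_n$ side). The ``moreover'' part of Lemma~\ref{lemmarelativetripod} then says that any relative geodesic from $\gamma_n$ to $\gamma'_1$ --- in particular the reverse of the second geodesic itself --- passes within a bounded $d$-distance $D$ of $\gamma_{l+1}$ (the case $l=n$ being trivial, with $k=n$, $j=m$). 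Fix an index $j_0$ with $d(\gamma'_{j_0},\gamma_{l+1})\le D$. Now the truncated reversed geodesics $(\gamma_n,\gamma_{n-1},\ldots,\gamma_{l+1})$ and $(\gamma'_m,\gamma'_{m-1},\ldots,\gamma'_{j_0})$ are relative geodesics starting at the common point $\gamma_n=\gamma'_m$ whose endpoints are within $d$-distance $D$; Lemma~\ref{Cfollowinggeodesics} furnishes a constant $C=C(D)$ with $d(\gamma_{n-s},\gamma'_{m-s})\le C$ for all $0\le s\le\min(n-l-1,\,m-j_0)$, which after setting $a=n-s$ reads $d(\gamma_a,\gamma'_{a+m-n})\le C$ on the range $\max(l+1,\,j_0+n-m)\le a\le n$: a synchronous estimate with constant index shift $n-m$.

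I would then finish by setting $k=\max(l+1,\,j_0+n-m)$ and $j=k+m-n$, so that $k-j=n-m$ and $d(\gamma_{k+i},\gamma'_{j+i})\le C$ whenever $i\ge 0$, $k+i\le n$ (equivalently $j+i\le m$); note $j_0\le j\le m$, so the indices make sense. To control $|k-l|$, compare lengths in $\hat d$: $\hat d(\gamma'_{j_0},\gamma_n)=m-j_0$ and $\hat d(\gamma_{l+1},\gamma_n)=n-l-1$ since these are subpaths of relative geodesics, while $\hat d(\gamma'_{j_0},\gamma_{l+1})\le d(\gamma'_{j_0},\gamma_{l+1})\le D$; the triangle inequality gives $|j_0+n-m-(l+1)|\le D$, hence $l+1\le k\le l+1+D$ and $|k-l|\le 1+D=:c$. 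The genuinely delicate points are making sure the fellow-travelling extracted is synchronous rather than merely asynchronous --- which is precisely what Lemma~\ref{Cfollowinggeodesics} buys from the BCP property --- and the index bookkeeping, in particular that the shift $k-j$ is forced to equal $n-m$ and that the base indices $k,j$ can be taken within bounded distance of $l$. Beyond this there is no real obstacle, the substance already being contained in Lemmas~\ref{lemmarelativetripod} and~\ref{Cfollowinggeodesics}.
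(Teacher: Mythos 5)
Correct, and this is the paper's argument: use Lemma~\ref{lemmarelativetripod} to locate a point on the second geodesic near the projection $\gamma_l$, then reverse both geodesics at the common endpoint $\gamma_n=\gamma_m'$ and apply Lemma~\ref{Cfollowinggeodesics}. The paper applies the first conclusion of Lemma~\ref{lemmarelativetripod} directly to $(\gamma_1,\ldots,\gamma_n)$ and $\gamma_1'$ before reversing (which also sidesteps a small subtlety in your route: after reversing, the tie-breaking convention of Lemma~\ref{lemmarelativetripod} would a priori select the projection nearest $\gamma_1$ rather than $\gamma_n$, though this only perturbs $l$ by a bounded amount and is absorbed into $c$), while you reverse first and invoke the ``moreover'' part; your explicit index bookkeeping, which the paper compresses into a one-line conclusion, is otherwise a faithful rendering of the same argument.
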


In other words, the two relative geodesics begin $C$-following each other at some point.
The moment they begin doing so is approximately the moment that the second geodesic reaches its projection on the first one.

\begin{proof}
According to Lemma~\ref{lemmarelativetripod}, the relative geodesic $(\gamma_1',...,\gamma_n')$ passes within a bounded distance of $\gamma_l$,
so there exists $p$ such that $d(\gamma_l,\gamma_p')\leq C_0$.
Consider the reversed geodesics $(\gamma_n,\gamma_{n-1},...,\gamma_l)$ and $(\gamma_m',\gamma_{m-1}',...,\gamma_p')$.
They begin at the same point so we can apply Lemma~\ref{Cfollowinggeodesics} to conclude.
\end{proof}

Recall that $B_C$ is the ball of center $e$ and of radius $C$ in the Cayley graph of $\Gamma$ with respect to $S$.
Denote by $P_C$ the (finite) set of subsets of $B_C\setminus \{e\}$ and define $V_1=V_0\times P_C$ (where $V_0$ is the set of vertices of $\mathcal{G}_0$).

We define a graph $\mathcal{G}_1$ with set of vertices $V_1$ as follows.
First consider a nicely reduced sequence $\overline{\sigma}=(\sigma_1,...,\sigma_n)\in \mathcal{N}$.
For all $1\leq k \leq n$, denote by $P_k$ the set of elements $\gamma\in B_C$ such that
one can find a sequence $(\sigma_1',...,\sigma_k')$ satisfying that
\begin{itemize}
    \item $(\sigma_1',...,\sigma_k')$ is strictly smaller than $(\sigma_1,...,\sigma_k)$ for the lexicographical order,
    \item $(\sigma_1',...,\sigma_k')$ $C$-follows  $(\sigma_1,...,\sigma_k)$,
    \item  $\sigma_1'...\sigma_k'=\sigma_1...\sigma_k\gamma$ in $\Gamma$.
\end{itemize}
Since $\overline{\sigma}$ is nicely reduced, $e\notin P_k$.
By definition, if $P_0:=\emptyset$, then
\begin{align*}
    P_k&=\{\gamma\in B_C, \text{ such that }\exists \sigma \in \Sigma \text{ strictly smaller than } \sigma_k \text{ with }\gamma=\sigma_k^{-1}\sigma\}\\
    &\cup \{\gamma\in B_C, \text{ such that }\exists \sigma \in \Sigma \text{ and } \gamma'\in P_{k-1}\text{ with }\gamma=\sigma_k^{-1}\gamma' \sigma\}.
\end{align*}

The first set corresponds to the case where $(\sigma_1',...,\sigma_{k-1}')=(\sigma_1,...,\sigma_{k-1})$ and the second one corresponds to the case where
$(\sigma_1',...,\sigma_{k-1}')$ is smaller than $(\sigma_1,...,\sigma_{k-1})$.

\medskip
We can now define the edges of $\mathcal{G}_1$.
Let $w=(v,P)$ and $w'=(v',P')$ be two vertices of $V_1$, with $v,v'\in V_0$ and $P,P'\in P_C$.
Define an edge from $w$ to $w'$ if the following two conditions are satisfied.
First, there is an edge in $\mathcal{G}_0$ from $v$ to $v'$, with label $\sigma$.
Second,
\begin{align*}
    P'&=\{\gamma\in B_C, \text{ such that }\exists \sigma' \in \Sigma \text{ smaller than } \sigma \text{ with }\gamma=\sigma^{-1}\sigma'\}\\
    &\cup \{\gamma\in B_C, \text{ such that }\exists \sigma' \in \Sigma \text{ and } \gamma'\in P\text{ with }\gamma=\sigma^{-1}\gamma' \sigma'\}.
\end{align*}
Then, label this edge from $w$ to $w'$ by $\sigma$ too.
Denote by $\phi$ the induced labelling map.

\begin{proposition}\label{nicelyreducedlanguage}
Recall that $v_0$ is the distinguished vertex of $\mathcal{G}_0$.
Assume that $(v_0,\emptyset),(v_1,P_1),...,(v_n,P_n)$ is a sequence of adjacent vertices in $\mathcal{G}_1$, starting at $(v_0,\emptyset)$, with edges $e_1,...,e_n$ labelled with $\sigma_1,...,\sigma_n$.
Then, $\overline{\sigma}=(\sigma_1,...,\sigma_n)$ is nicely reduced.
\end{proposition}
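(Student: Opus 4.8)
The plan is to show first that $\overline{\sigma}$ is reduced, and then that it is the lexicographically smallest reduced sequence representing $\gamma_n:=\sigma_1\cdots\sigma_n$ in $\Gamma$, the latter by contradiction. For the first point I would simply forget the second coordinate of the vertices: by construction every edge of $\mathcal{G}_1$ sits over an edge of $\mathcal{G}_0$ carrying the same label, so the hypothesis produces a path $v_0,v_1,\dots,v_n$ in $\mathcal{G}_0$ with consecutive labels $\sigma_1,\dots,\sigma_n$. Since $\mathcal{G}_0$ satisfies the first three items of Definition~\ref{definitionautomaticstructure} (by Lemma~\ref{lemmasametypeempty}, the construction of $\mathcal{G}_0$, and Proposition~\ref{finiteconetypes} for finiteness), the path $(e,\gamma_1,\dots,\gamma_n)$ with $\gamma_j=\sigma_1\cdots\sigma_j$ is a relative geodesic. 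Hence $\overline{\sigma}$ is reduced and $n=\hat{d}(e,\gamma_n)$.

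Next I would assume, for contradiction, that $\overline{\sigma}$ is not nicely reduced. Then there is a reduced sequence $\overline{\sigma}'=(\sigma_1',\dots,\sigma_m')$ with $\sigma_1'\cdots\sigma_m'=\gamma_n$ in $\Gamma$ and $\overline{\sigma}'<\overline{\sigma}$ for the lexicographical order. Since $\overline{\sigma}'$ is also reduced, $m=\hat{d}(e,\gamma_n)=n$; being distinct sequences of the same length, they first differ at a smallest index $k$, and $\overline{\sigma}'<\overline{\sigma}$ forces $\sigma_i'=\sigma_i$ for $i<k$ and $\sigma_k'<\sigma_k$. Write $\gamma_j'=\sigma_1'\cdots\sigma_j'$ and $\delta_j=\gamma_j^{-1}\gamma_j'$ for $0\le j\le n$. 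Taking $C$ to be the constant of Lemma~\ref{Cfollowingsequences} used in building $\mathcal{G}_1$ (enlarged if necessary), that lemma gives $d(\gamma_j,\gamma_j')\le C$, i.e.\ $\delta_j\in B_C$ for every $j$.

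The core of the proof is then an induction showing $\delta_j\in P_j$ for $k\le j\le n$. For $j=k$, the equality $\gamma_{k-1}'=\gamma_{k-1}$ gives $\delta_k=\sigma_k^{-1}\sigma_k'$, which belongs to $P_k$ by the first clause in the defining formula of $P_k$, since $\sigma_k'<\sigma_k$ and $\delta_k\in B_C$. For the inductive step, from $\gamma_{j+1}=\gamma_j\sigma_{j+1}$ and $\gamma_{j+1}'=\gamma_j'\sigma_{j+1}'$ one computes $\delta_{j+1}=\sigma_{j+1}^{-1}\delta_j\sigma_{j+1}'$, which lies in $P_{j+1}$ by the second clause of the defining formula, since $\delta_j\in P_j$, $\sigma_{j+1}'\in\Sigma$ and $\delta_{j+1}\in B_C$. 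Applying this at $j=n$ yields $\delta_n=\gamma_n^{-1}\gamma_n'=e\in P_n$, contradicting $P_n\subseteq B_C\setminus\{e\}$, which holds because $(v_n,P_n)\in V_1=V_0\times P_C$. Hence $\overline{\sigma}$ is nicely reduced.

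I do not expect any geometric difficulty here: all the hyperbolicity and bounded coset penetration input has already been absorbed into Lemma~\ref{Cfollowingsequences}. The one point that needs care is purely combinatorial bookkeeping — identifying the edge-update rule of $\mathcal{G}_1$ with the recursive description of the $P_k$, extracting from $\overline{\sigma}'<\overline{\sigma}$ a position $k$ where the new letter is strictly smaller, and making sure the constant $C$ controlling the $\delta_j$ is exactly the one with respect to which $\mathcal{G}_1$ was defined. In short, the proposition amounts to checking that the $P$-coordinate attached to the vertices of $\mathcal{G}_1$ does what it was designed to do.
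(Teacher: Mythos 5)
Your argument is correct and follows the same route as the paper: assume a smaller reduced representative exists, locate the first index of disagreement, use Lemma~\ref{Cfollowingsequences} to keep the differences $\delta_j$ inside $B_C$, and propagate $\delta_j\in P_j$ inductively via the edge-update rule of $\mathcal{G}_1$ until $\delta_n=e\in P_n$ gives a contradiction. You spell out a couple of steps that the paper leaves implicit (that $\overline{\sigma}$ is reduced because $\mathcal{G}_1$ projects onto $\mathcal{G}_0$, and that any competing reduced sequence must also have length $n$), but the underlying idea and all the key ingredients coincide.
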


\begin{proof}
Assume it is not nicely reduced.
Then, let $\overline{\sigma}'=(\sigma_1',...,\sigma_n')$ be the nicely reduced sequence with $\sigma_1...\sigma_n=\sigma_1'...\sigma_n'$.
Denote by $i$ the first time $\sigma_i\neq \sigma_i'$, so that $\sigma_i'<\sigma_i$.
According to Lemma~\ref{Cfollowingsequences}, we have $d(\sigma_1...\sigma_i,\sigma_1'...\sigma_i')\leq C$ and since $\sigma_1=\sigma_1'$,...,$\sigma_{i-1}=\sigma_{i-1}'$, we have $d(\sigma_i,\sigma_i')\leq C$.
Thus, $\sigma_i^{-1}\sigma_i'\in P_i$.
This in turn proves that $\sigma_{i+1}^{-1}\sigma_i^{-1}\sigma_i'\sigma_{i+1}'\in P_{i+1}$,...,$\sigma_{n}^{-1}...\sigma_i^{-1}\sigma_i'...\sigma_{n}'\in P_n$.
However, we have $\sigma_1...\sigma_{i-1}=\sigma_1'...\sigma_{i-1}'$ and $\sigma_1...\sigma_{n}=\sigma_1'...\sigma_{n}'$, so that $e\in P_n$, which is a contradiction.
\end{proof}

Finally, we choose the distinguished vertex $v_*=(v_0,\emptyset)$ and denote by $\mathcal{G}$ the subgraph of $\mathcal{G}_1$ whose set of vertices $V$ consists of those one can reach from $v_*$.
Since we chose the same labelling maps for $\mathcal{G}$ and $\mathcal{G}_0$, 
the graph $\mathcal{G}$ still satisfies the three first conditions of Definition~\ref{definitionautomaticstructure}.
Also, Proposition~\ref{nicelyreducedlanguage} shows that the sequences one can construct with the labelling map are necessarily nicely reduced, so that the fourth condition of Definition~\ref{definitionautomaticstructure} also is satisfied.
This proves Theorem~\ref{thmcodingrelhypgroups}. \qed

\section{Green functions of the parabolic subgroups}\label{SectionGreenparabolics}
We consider a group $\Gamma$, hyperbolic relative to a collection of peripheral subgroups $\Omega$ and we fix a finite collection $\Omega_0=\{\mathcal{H}_1,...,\mathcal{H}_N\}$ of representatives of conjugacy classes of $\Omega$.
We assume that $\Gamma$ is non-elementary.
Let $\mu$ be a probability measure on $\Gamma$, $R_{\mu}$ the spectral radius of the $\mu$-random walk and $G(\gamma,\gamma'|r)$ the associated Green function, evaluated at $r$, for $r\in [0,R_{\mu}]$.
If $\gamma=\gamma'$, we simply use the notation $G(r)=G(\gamma,\gamma|r)=G(e,e|r)$.

\medskip
Recall the following notations from Section~\ref{Sectionspectralpositiverecurrence}.
We let $p_{k,r}$ be the transition kernel of first return to $\mathcal{H}_k$, associated with $r\mu$.
Also, we let $G_{k,r}$ be the associated Green function and $R_k(r)$ be the associated spectral radius.
We write $R_k=R_k(R_\mu)$.

Our goal in this section is to collect some properties of $G_{k,r}$ and $R_{k,r}$ and to relate them with properties of the initial Green function $G$.
We will refer to $G_{k,r}$ and $R_{k,r}$ as the parabolic Green functions and the parabolic spectral radii.

\subsection{Derivatives of the parabolic Green functions}\label{SectionderivativesparabolicGreen}
We begin this section with some geometric lemmas.
We fix a generating set $S$.
The following is a direct consequence of \cite[Lemma~2.10]{Sisto2}.

\begin{lemma}\label{lemmaboundedprojection}
Let $\Gamma$ be a relatively hyperbolic group.
There exists $C\geq 0$ such that the following holds.
Let $\xi$ be a parabolic limit point and let $\mathcal{H}$ be its stabilizer.
Then, there exists a neighborhood $\mathcal{U}$ of $\xi$ in the Bowditch compactification $\Gamma \cup \partial_B\Gamma$ such that the nearest point projection of $\Gamma \cap \mathcal{U}^c$ on $\mathcal{H}$ has diameter at most $C$.
\end{lemma}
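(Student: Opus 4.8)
The lemma restates \cite[Lemma~2.10]{Sisto2} in the language of the Bowditch compactification, so the plan is to recall what Sisto's lemma provides and then to exhibit a neighborhood of $\xi$ adapted to it. Sisto works inside the Cayley graph $\Cay(\Gamma,S)$, on which every peripheral coset carries a coarsely well-defined nearest-point projection; his Lemma~2.10 gives a constant $C$, independent of the coset, and singles out a region $\mathcal{Z}$ of $\Cay(\Gamma,S)$ attached to that coset — a \emph{shadow} of the coset, coarsely the trace on $\Gamma$ of a horoball of $X$ centered at the coset's parabolic point at infinity — such that the projection of $\Gamma\setminus\mathcal{Z}$ onto the coset has diameter at most $C$. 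The uniform constant in the present statement is this $C$. Now $\xi$ is exactly the parabolic point at infinity of the peripheral coset whose stabilizer is $\mathcal{H}$, and passing between that coset and $\mathcal{H}$ is a harmless identification here; so it suffices to produce a neighborhood $\mathcal{U}$ of $\xi$ in $\Gamma\cup\partial_B\Gamma$ with $\Gamma\cap\mathcal{U}^c$ disjoint from $\mathcal{Z}$.

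To build $\mathcal{U}$, I would use that $\partial_B\Gamma=\partial X$ for a proper geodesic hyperbolic space $X$ on which $\Gamma$ acts geometrically finitely, and that at a bounded parabolic point the traces on $\Gamma\cup\partial_B\Gamma$ of deep horoballs centered at that point form a neighborhood basis. Hence I would take $\mathcal{U}$ to be $\{\xi\}$ together with such a horoball shadow, of depth large enough that its intersection with $\Gamma$ contains $\mathcal{Z}$; equivalently, $\mathcal{U}$ is chosen small enough that $\Gamma\cap\mathcal{U}^c$ lies outside $\mathcal{Z}$. One then checks that $\mathcal{U}$ is genuinely open in the Bowditch topology — this is the neighborhood-basis statement, i.e. every sequence converging to $\xi$ eventually enters any prescribed horoball at $\xi$. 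With this choice, $\Gamma\cap\mathcal{U}^c\subseteq\Gamma\setminus\mathcal{Z}$, so by \cite[Lemma~2.10]{Sisto2} its nearest-point projection onto $\mathcal{H}$ has diameter at most $C$.

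The only point requiring care — and the one real obstacle — is the dictionary between Sisto's coarse region $\mathcal{Z}\subseteq\Cay(\Gamma,S)$ and the topological picture in $\Gamma\cup\partial_B\Gamma$: one must verify that lying outside a sufficiently small neighborhood of $\xi$ forces lying outside $\mathcal{Z}$, i.e. that a horoball shadow of large enough depth swallows $\mathcal{Z}$, and, dually, that these shadows are open. The remaining ingredients are routine: uniformity of $C$ over all parabolic points is inherited verbatim from Sisto, since his bound does not depend on the coset, so one does not even need to reduce to the finitely many conjugacy classes in $\Omega_0$.
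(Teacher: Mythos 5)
The paper supplies no proof at all for this statement: it simply remarks, in the sentence preceding the lemma, that it is ``a direct consequence of \cite[Lemma~2.10]{Sisto2}'' and moves on. Your proposal is therefore not competing with a written argument; it is an attempt to spell out what that one-line citation is suppressing, and it does so along the expected lines: invoke Sisto's uniform constant $C$ and the associated region $\mathcal{Z}$ outside of which the projection to the coset has diameter at most $C$, then translate ``outside $\mathcal{Z}$'' into ``outside a neighborhood of $\xi$'' by using that horoball-type shadows at a bounded parabolic point form a neighborhood basis in the Bowditch compactification. That is the same deduction the author has in mind, and the uniformity of $C$ over all parabolic points is indeed inherited from Sisto, as you say.

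Two small remarks. First, the phrase ``$\mathcal{U}$ is chosen small enough that $\Gamma\cap\mathcal{U}^c$ lies outside $\mathcal{Z}$'' reads as a contradiction with the preceding ``of depth large enough that its intersection with $\Gamma$ contains $\mathcal{Z}$'': both clauses express the same inclusion $\mathcal{Z}\subseteq\mathcal{U}\cap\Gamma$, which is a \emph{lower} bound on $\mathcal{U}$, so ``small enough'' should be replaced by wording that does not suggest an upper bound. Second, the one step you flag as the ``real obstacle'' — that a sufficiently deep horoball shadow both is open around $\xi$ and swallows $\mathcal{Z}$, equivalently that every sequence in $\Gamma$ converging to $\xi$ eventually enters $\mathcal{Z}$ — is left as a plausibility claim rather than an argument. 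Since the paper itself provides nothing beyond the citation, this is not a gap relative to the paper, but a complete write-up would have to nail down that the trace of $\mathcal{Z}$ on $\Gamma$, union $\{\xi\}$, has $\xi$ in its interior (equivalently that $\xi\notin\overline{\Gamma\setminus\mathcal{Z}}$), which is where the precise form of Sisto's region $\mathcal{Z}$ and the definition of the Bowditch topology at parabolic points actually get used.
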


We deduce the following.

\begin{lemma}\label{lemmafinitesetloxodromics}
Let $\Gamma$ be a non-elementary relatively hyperbolic group and let $\mathcal{H}$ be a maximal parabolic subgroup.
There exists $C\geq0$ such that the following holds.
There exists a finite set $F=\{\gamma_1,\gamma_2,\gamma_3\}$ of loxodromic elements such that for any $\gamma,\gamma'\in \Gamma$, there exists $i$ such that
the nearest points projections of $\gamma_i\gamma$ and $\gamma_i\gamma'$ on $\mathcal{H}$ are at distance at most $C$ from $e$.
\end{lemma}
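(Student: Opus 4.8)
The idea is to use the dynamics of a sufficiently large finite set of loxodromic elements to ``push'' any configuration of points away from a bad region near $\mathcal{H}$. Let $\xi$ be the parabolic limit point stabilized by $\mathcal{H}$, and apply Lemma~\ref{lemmaboundedprojection} to obtain a neighborhood $\mathcal{U}$ of $\xi$ in the Bowditch compactification such that the nearest point projection of $\Gamma\cap\mathcal{U}^c$ on $\mathcal{H}$ has diameter at most some constant $C_0$. Translating the problem by the action of $\Gamma$, for $\gamma\in\Gamma$ the projection of $\gamma_i\gamma$ on $\mathcal{H}$ is controlled as soon as $\gamma_i\gamma\in\mathcal{U}^c$, i.e.\ as soon as $\gamma\notin \gamma_i^{-1}\mathcal{U}$. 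So the statement will follow once we find a finite set $F=\{\gamma_1,\gamma_2,\gamma_3\}$ of loxodromic elements such that the three open sets $\gamma_1^{-1}\mathcal{U},\gamma_2^{-1}\mathcal{U},\gamma_3^{-1}\mathcal{U}$ have empty common intersection when we also ask this simultaneously for $\gamma$ and $\gamma'$ --- more precisely, such that for any pair $(\gamma,\gamma')$ there is an index $i$ with $\gamma,\gamma'\notin\gamma_i^{-1}\mathcal{U}$, which amounts to: the three sets $\gamma_i^{-1}\mathcal{U}$, $i=1,2,3$, cover no point twice in the sense that no single $\gamma_i^{-1}\mathcal{U}$ can be avoided only by sending it to... wait, what we actually need is $\bigcap_i (\gamma_i^{-1}\mathcal{U}) = \emptyset$ combined with the pair condition. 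Let me restate: we need that for all $\gamma$, at least two of the three sets $\gamma_i^{-1}\mathcal{U}$ do not contain $\gamma$; equivalently $\gamma$ lies in at most one of them; equivalently the sets $\gamma_i^{-1}\mathcal{U}$ are pairwise disjoint. Then for any $\gamma,\gamma'$, since $\gamma$ lies in at most one and $\gamma'$ lies in at most one, there is an index $i$ (out of three) avoided by both.

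\textbf{Reduction to a boundary statement.} Thus the goal becomes: find three loxodromic elements $\gamma_1,\gamma_2,\gamma_3$ such that $\gamma_i^{-1}\mathcal{U}$ are pairwise disjoint as subsets of $\Gamma\cup\partial_B\Gamma$. This is where non-elementarity enters: since $\Gamma$ is non-elementary, its limit set has more than two points, so there exist loxodromic elements with distinct fixed point pairs; in fact one can find loxodromic elements $g$ whose attracting and repelling fixed points $g^+,g^-$ avoid $\xi$ and any prescribed finite set of boundary points. Recall $\mathcal{U}$ is an open neighborhood of $\xi$, and we may shrink it so that its closure avoids a chosen finite set of points (this requires that $\xi$ is not isolated, which holds since the Bowditch boundary is perfect for non-elementary $\Gamma$). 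Now fix a loxodromic $g$ with $g^+,g^-\notin\overline{\mathcal{U}}$ and with $g^+\ne g^-$; by replacing $g$ with a high power $g^N$, North--South dynamics guarantees that $g^{-N}\mathcal{U}$ shrinks toward $g^-$ (since $\xi\in\mathcal{U}$, $g^{-N}\xi\to g^-$ and more strongly $g^{-N}$ contracts the complement of a neighborhood of $g^+$ toward $g^-$; since $\overline{\mathcal U}\subset$ complement of a neighborhood of $g^+$ for large... hmm, we need $g^+\notin\overline{\mathcal{U}}$, which we arranged). So for $N$ large, $g^{-N}\mathcal{U}$ is contained in any prescribed small neighborhood of $g^-$. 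Picking three loxodromic elements $g_1,g_2,g_3$ with three distinct repelling fixed points $g_1^-,g_2^-,g_3^-$ (possible by non-elementarity --- there are infinitely many loxodromic fixed points), and taking high enough powers, the sets $g_i^{-N_i}\mathcal{U}$ become pairwise disjoint. Set $\gamma_i=g_i^{N_i}$; these are loxodromic, and $\gamma_i^{-1}\mathcal{U}=g_i^{-N_i}\mathcal{U}$ are pairwise disjoint.

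\textbf{Conclusion and main obstacle.} Given the pairwise disjointness, for any $\gamma,\gamma'\in\Gamma$ we have $|\{i:\gamma\in\gamma_i^{-1}\mathcal{U}\}|\le 1$ and likewise for $\gamma'$, so among $i=1,2,3$ there is an index with $\gamma,\gamma'\notin\gamma_i^{-1}\mathcal{U}$, i.e.\ $\gamma_i\gamma,\gamma_i\gamma'\in\mathcal{U}^c$; by Lemma~\ref{lemmaboundedprojection} (applied $\Gamma$-equivariantly), the projections of $\gamma_i\gamma$ and $\gamma_i\gamma'$ onto $\mathcal{H}$ both lie within $C_0$ of a single point, and after a bounded translation we may take that point to be $e$, giving the constant $C$ of the statement. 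The main obstacle is the boundary-dynamics step: one must be careful that ``North--South dynamics'' for a loxodromic element acting on the Bowditch boundary holds in the form needed (contraction of compact subsets of $\partial_B\Gamma\setminus\{g^+\}$ toward $g^-$ under $g^{-N}$), and that $\mathcal{U}$ can be chosen with $\overline{\mathcal{U}}$ avoiding the three attracting fixed points $g_i^+$; both facts hold because $\Gamma$ acts as a convergence group on $\partial_B\Gamma$ with the usual attracting/repelling behavior for loxodromics, and because the boundary is perfect for non-elementary $\Gamma$ so small neighborhoods of $\xi$ omitting finitely many other points exist. The rest is the bookkeeping to turn ``bounded diameter of projection'' into ``within $C$ of $e$,'' which just absorbs a bounded constant depending on $\mathcal{H}$.
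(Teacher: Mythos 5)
Your strategy matches the paper's: apply Lemma~\ref{lemmaboundedprojection}, find three loxodromic elements with pairwise distinct repelling fixed points, and take high powers so that (by the convergence-group / North--South dynamics) the pull-back sets $\gamma_i^{-1}\mathcal{U}$ become pairwise disjoint; the pigeon-hole on the pair $(\gamma,\gamma')$ then does the rest. The paper phrases this via disjoint neighborhoods $\mathcal{V}_i$ of the repelling fixed points $\gamma_i^-$ together with $\gamma_i(\mathcal{V}_i^c)\subset\mathcal{U}_i\subset\mathcal{U}^c$, which is equivalent to $\gamma_i^{-1}\mathcal{U}\subset\mathcal{V}_i$ and hence to the pairwise disjointness you argue for directly.

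There is one step you should not make as written: ``we may shrink $\mathcal{U}$ so that its closure avoids a chosen finite set of points.'' Shrinking goes in the wrong direction. Lemma~\ref{lemmaboundedprojection} provides a particular $\mathcal{U}$; replacing it by a smaller $\mathcal{U}'\subset\mathcal{U}$ enlarges $\Gamma\cap(\mathcal{U}')^c$, and the bounded-diameter conclusion need not persist --- for instance, elements of $\mathcal{H}$ itself that are far from $e$ but now fall into $(\mathcal{U}')^c$ project to themselves, with unbounded diameter. Fortunately the shrinking is also unnecessary: instead of shrinking $\mathcal{U}$ to dodge pre-chosen fixed points, choose the loxodromics $g_i$ so that their attracting fixed points $g_i^+$ already lie in $\overline{\mathcal{U}}^c$ (as the paper does); this is possible since loxodromic fixed points are dense in the Bowditch boundary and $\mathcal{U}^c$ has non-empty interior. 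With that single correction your argument is sound and coincides with the paper's proof.
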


\begin{proof}
Let $\xi$ be the parabolic fixed point of $\mathcal{H}$.
Let $\mathcal{U}$ be the neighborhood of $\xi$ in the Bowditch compactification given by Lemma~\ref{lemmaboundedprojection}.
Since $\Gamma$ is non-elementary, we can find three loxodromic elements $\gamma_1$, $\gamma_2$ and $\gamma_3$ whose attrative fixed points $\gamma_1^+,\gamma_2^+,\gamma_3^+$ are all dinstinct and lie in $\mathcal{U}^c$.
Fix a neighborhood $\mathcal{U}_i$ of $\gamma_i^+$ contained in $\mathcal{U}^c$.
Also, fix three disjoint neighborhoods $\mathcal{V}_i$ of the repelling fixed points $\gamma_i^-$ of $\gamma_i$.
Then, up to changing $\gamma_i$ by some power of itself, the image of the complement of $\mathcal{V}_i$ is contained in $\mathcal{U}_i$.

Since $\mathcal{V}_i$ are all disjoint, there exists $i$ such that both $\gamma\notin \mathcal{V}_i$ and $\gamma'\notin \mathcal{V}_i$.
In particular, $\gamma_i\gamma$ and $\gamma_i\gamma'$ both lie in the complement of $\mathcal{U}$.
Hence, their projection on $\mathcal{H}$ are within a uniform bounded distance of $e$, according to Lemma~\ref{lemmaboundedprojection}.
\end{proof}

We use these lemmas to prove the following generalization of \cite[Lemma~1.13]{Yang1} (see also \cite[Lemma~2.4]{Gouezel1} for a similar statement for hyperbolic groups).

\begin{lemma}\label{lemmaYangGouezel}
There exists $C\geq 0$ and $c\geq 0$ such that for any $\gamma,\gamma'\in \Gamma$, one can find some point $\sigma\in B(e,C)$ such that $\hat{d}(e,\gamma \sigma \gamma')\geq \hat{d}(e,\gamma)+\hat{d}(e,\gamma')-c$.
Morever, if $\alpha$ is a relative geodesic from $e$ to $\gamma \sigma \gamma'$, then $\gamma$ is within a uniform distance of a point on $\alpha$.
\end{lemma}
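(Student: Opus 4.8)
The plan is to use Lemma~\ref{lemmafinitesetloxodromics} to produce, given $\gamma,\gamma'$, an auxiliary loxodromic element $\gamma_i\in F$ whose effect is to push $\gamma$ and $\gamma'$ into "general position" with respect to some parabolic subgroup $\mathcal{H}$, and then glue the two relative geodesics $[e,\gamma]$ and $[e,\gamma']$ (suitably reinterpreted) across a bounded piece. First I would fix a maximal parabolic subgroup $\mathcal{H}\in\Omega_0$ and apply Lemma~\ref{lemmafinitesetloxodromics} to get a finite set $F=\{\gamma_1,\gamma_2,\gamma_3\}$ of loxodromics and a constant $C_0$ such that, for the given $\gamma^{-1}$ and $\gamma'$ (note the need to be careful about which elements one feeds in, since we want to control behaviour near $e$ after left translation), there is an index $i$ for which the nearest-point projections onto $\mathcal{H}$ of $\gamma_i\gamma^{-1}$ and $\gamma_i\gamma'$ are $C_0$-close to $e$. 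The candidate element is then $\sigma:=\gamma_i$ (or a bounded modification of it), which indeed lies in a ball $B(e,C)$ with $C=\max_i \hat d(e,\gamma_i)$ — a fixed constant.

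The key step is to show $\hat d(e,\gamma\sigma\gamma')\geq \hat d(e,\gamma)+\hat d(e,\gamma')-c$. I would argue this by a projection/alignment argument in $\hat\Gamma$: consider a relative geodesic $\alpha$ from $e$ to $\gamma\sigma\gamma'$ and its lift to the Cayley graph, together with the relative geodesics $[e,\gamma]$ and $[\gamma\sigma,\gamma\sigma\gamma']$ (a translate of $[e,\gamma']$). The condition on the projections onto $\mathcal{H}$, transported by left multiplication by $\gamma$, says that $\gamma\sigma$ and $\gamma$ project onto the coset $\gamma\mathcal{H}$ near a common point; combined with the fact that $\sigma$ is loxodromic (so that going from $\gamma$ to $\gamma\sigma$ genuinely "exits towards" a direction transverse to $\mathcal{H}$), this forces the concatenation of $[e,\gamma]$, a bounded path from $\gamma$ to $\gamma\sigma$, and $[\gamma\sigma,\gamma\sigma\gamma']$ to be a relative quasi-geodesic with uniform constants. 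The Morse property in the hyperbolic graph $\hat\Gamma$ then gives the lower bound on $\hat d(e,\gamma\sigma\gamma')$ up to an additive constant $c$, and simultaneously shows that any relative geodesic $\alpha$ from $e$ to $\gamma\sigma\gamma'$ stays within bounded Hausdorff distance of that concatenation — in particular it passes within a uniform distance of $\gamma$, giving the "moreover" clause. Here Lemma~\ref{Theorem52AntolinCiobanu} (recognizing relative quasi-geodesics from local geodesics avoiding a finite forbidden list) and Lemma~\ref{Proposition315Osin} / Lemma~\ref{Cfollowinggeodesics} are the workhorses for passing between $\hat d$-control and $d$-control and for the fellow-travelling statement.

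I expect the main obstacle to be verifying that the concatenation is genuinely a relative quasi-geodesic, i.e.\ that no unexpected cancellation occurs where $[e,\gamma]$ and the translate of $[e,\gamma']$ meet at $\gamma\sigma$. The point of inserting the loxodromic $\sigma$ (rather than an arbitrary short word) is precisely to rule out the bad case in which both geodesics run deep into the \emph{same} coset of $\mathcal{H}$ and backtrack: Lemma~\ref{lemmafinitesetloxodromics} guarantees the projections to $\mathcal{H}$ are near $e$, so the excursions of $[e,\gamma]$ and $[\gamma\sigma,\gamma\sigma\gamma']$ into $\gamma\mathcal{H}$ (if any) are short, and the BCP property then prevents any long shared coset penetration from spoiling the quasi-geodesic estimate. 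Making this rigorous requires combining Lemma~\ref{lemmaboundedprojection} with a no-backtracking normalization of the relative geodesics and a careful bookkeeping of which cosets each piece penetrates; once that is in place, the quantitative Morse estimate in $\hat\Gamma$ and Lemma~\ref{Cfollowinggeodesics} finish both conclusions.
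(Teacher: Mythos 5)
Your proposal identifies Lemma~\ref{lemmafinitesetloxodromics} as the key tool, which matches the paper, but it misidentifies the role of the loxodromic elements $\gamma_i$ and this leads to a genuine gap in the ``moreover'' clause. In the paper, the inserted element is \emph{not} $\gamma_i$: it is $\sigma=\gamma_i^{-1}\sigma_2\gamma_i$, where $\sigma_2\in\mathcal{H}$ is a \emph{long parabolic} element fixed once and for all. The loxodromics serve only to align $\gamma^{-1}$ and $\gamma'$ so that their projections on (cosets of) $\mathcal{H}$ land near the identity; the actual glue is a conjugated long parabolic, and the concatenated path is shown to be a relative quasi-geodesic by Lemma~\ref{Theorem52AntolinCiobanu} precisely because the middle jump is a single long $\mathcal{H}$-element that cannot appear in the finite forbidden list $\mathcal{NG}$.

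This distinction is not cosmetic. Your plan gets the lower bound on $\hat d(e,\gamma\sigma\gamma')$ from the Morse lemma in the hyperbolic graph $\hat\Gamma$ --- that part would be fine. But the second conclusion of the lemma requires that a relative geodesic $\alpha$ from $e$ to $\gamma\sigma\gamma'$ pass within uniformly bounded $d$-distance (Cayley distance, not $\hat d$) of $\gamma$. Morse/fellow-travelling in $\hat\Gamma$ only gives $\hat d$-closeness, which is strictly weaker: two points in the same peripheral coset are $\hat d$-close but can be arbitrarily $d$-far. Your invocation of Lemma~\ref{Cfollowinggeodesics} does not help, since that lemma is stated for two relative \emph{geodesics} with the same endpoints, whereas the concatenation is only a quasi-geodesic; and Lemma~\ref{Proposition315Osin} gives $d$-control only at entry/exit points of peripheral cosets, which $\gamma$ need not be if $\sigma=\gamma_i$ is a short $S$-word. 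The $d$-control is not a luxury: it is used in the proof of Lemma~\ref{finitesumalongspheres} to show that the map $\Psi(\gamma,\gamma')=\gamma\sigma\gamma'$ has uniformly bounded fibers, which fails if one only has $\hat d$-control. The paper's long parabolic $\sigma_2$ is exactly what repairs this: it creates a coset penetration of length $\gtrsim d(e,\sigma_2)$ at the junction, so that by BCP any relative geodesic $\alpha$ must enter the same coset at a $d$-bounded distance from the quasi-geodesic's entry point $\gamma\gamma_i^{-1}\sigma_0$, which is itself $d$-close to $\gamma$. To repair your argument you would need to replace $\sigma=\gamma_i$ by a bounded conjugate of a long element of $\mathcal{H}$ and verify the BCP step explicitly.
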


\begin{proof}
First, we can assume that there exists an infinite maximal parabolic subgroup.
Otherwise, $\Gamma$ is hyperbolic and the result is given by \cite[Lemma~2.4]{Gouezel1}.
Fix such a parabolic subgroup $\mathcal{H}$.
Choose a set $F=\{\gamma_1,\gamma_2,\gamma_3\}$ as in Lemma~\ref{lemmafinitesetloxodromics}.

Fix $\gamma,\gamma'\in \Gamma$.
According to Lemma~\ref{lemmafinitesetloxodromics}, there exists $i$ such that
the projection of $\gamma^{-1}$ on $\gamma_i^{-1}\mathcal{H}$ is within a uniformly bounded distance of $\gamma_i^{-1}$ and the projection of $\gamma_i\gamma'$ on $\mathcal{H}$ is within a uniformly bounded distance of $e$.

The BCP property shows that the projection of $\gamma^{-1}$ on $\gamma_i^{-1}\mathcal{H}$ in $\hat{\Gamma}$ also is within a uniformly $d$-bounded distance of $\gamma_i^{-1}$.
More precisely, we have the following.
Let $\beta_0$ be a relative geodesic from $\gamma^{-1}$ to $\gamma_i^{-1}\mathcal{H}$ and denote its endpoint by $\gamma_i^{-1}\sigma_0$.
Then, \cite[Lemma~1.13~(2)]{Sisto2} shows that $d(e,\sigma_0)$ is uniformly bounded.
Similarly, let $\beta_1$ be a relative geodesic from $\mathcal{H}$ to $\gamma_i\gamma'$ and denote its starting point by $\sigma_1$.
Then, $d(e,\sigma_0),d(e,\sigma_1)\leq C_1$ for some uniform $C_1$.
Letting $\sigma_2\in \mathcal{H}$, consider the translated path $\gamma_i^{-1}\sigma_2\beta_1$ which starts at $\gamma_i^{-1}\sigma_2 \sigma_1$ and ends at $\gamma_i^{-1}\sigma_2 \gamma_i\gamma'$.
Finally, let $\beta_2$ be the concatenation of $\beta_0$, a path on length 1 in $\hat{\Gamma}$ from $\gamma_i^{-1}\sigma_0$ to $\gamma_i^{-1}\sigma_2 \sigma_1$ and the translated path $\gamma_i^{-1}\sigma_2\beta_1$.
Then, $\beta_2$ starts at $\gamma^{-1}$, passes through $\gamma_i^{-1}\sigma_0$ and ends at $\gamma_i^{-1}\sigma_2 \gamma_i\gamma'$.

Then, $\beta_2$ is a 2-local relative geodesic.
Thus, if $d(e,\sigma_2)$ is large enough, independently of $\gamma$ and $\gamma'$, Lemma~\ref{Theorem52AntolinCiobanu} shows that $\beta_2$ is a $(\lambda,c)$-quasi geodesic, for uniform $\lambda$ and $c$.
Thus, the BCP property shows that a relative geodesic from $\gamma^{-1}$ to $\gamma_i^{-1}\sigma_2\gamma_i\gamma'$ passes within a bounded distance of $\gamma_i^{-1}\sigma_0$.

Fix such a $\sigma_2$ large enough and let $\sigma=\gamma_i^{-1}\sigma_2\gamma_i$.
Finally, let $\alpha$ be a relative geodesic from $e$ to $\gamma\sigma \gamma'$.
Then, $\alpha$ passes within a uniformly bounded distance of $\gamma\gamma_i^{-1}\sigma_0$.
Since $d(e,\sigma_0)\leq c$ and since $\gamma_i^{-1}$ is fixed, it also passes within a uniformly bounded distance of $\gamma$.
Moreover, since $\sigma_2$ is fixed, there is a finite number of possibilities for $\sigma$, so that $d(e,\sigma)\leq C$ for some uniform $C$.
This concludes the proof.
\end{proof}

We first deduce from this the following lemma, adapted from \cite[Lemma~2.5]{Gouezel1}.
Denote by $\hat{S}_m$ the relative sphere of radius $m$, that is,
elements $\gamma\in \Gamma$ such that $\hat{d}(e,\gamma)=m$, where $\hat{d}$ is the distance in $\hat{\Gamma}$.
For simplicity, for $r\in [0,R_{\mu}]$, we write
$H(e,\gamma|r)=G(e,\gamma|r)G(\gamma,e|r)$.

\begin{lemma}\label{finitesumalongspheres}
There exists some uniform $C\geq 0$ such that for every $r\in [0,R_{\mu}]$, for every $m$,
$$\sum_{\gamma\in \hat{S}_m}H(e,\gamma|r)\leq C.$$
\end{lemma}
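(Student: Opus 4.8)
The goal is to bound $\sum_{\gamma \in \hat S_m} H(e,\gamma|r)$ uniformly in $m$ and in $r \le R_\mu$. The plan is to group the elements of the relative sphere $\hat S_m$ and run a counting/injectivity argument that trades a sum over $\hat S_m$ for a sum over all of $\Gamma$, which we know is controlled by $I^{(1)}(R_\mu)$ — except we cannot assume that is finite, so instead we use the weak relative Ancona inequalities~(\ref{equationAncona}) to telescope. Concretely: for each $\gamma \in \hat S_m$, fix a relative geodesic from $e$ to $\gamma$ in $\hat\Gamma$; Ancona applied along this geodesic gives, up to a multiplicative constant depending only on $\Gamma$ and $\mu$ (not on $r$ or $\gamma$), a factorization of $G(e,\gamma|r)$ and of $G(\gamma,e|r)$ through intermediate transition points. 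This already shows $H(e,\gamma|r) \asymp H(e,\gamma_1|r)\,H(\gamma_1,\gamma|r)$ for $\gamma_1$ roughly on a relative sphere of smaller radius, but the real point is to reduce to the case $m=1$ or $m=2$ and then iterate.

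The key mechanism is the extension lemma just proved, Lemma~\ref{lemmaYangGouezel}: for any $\gamma \in \hat S_m$ there is $\sigma \in B(e,C)$ with $\hat d(e,\gamma\sigma\gamma) \ge 2m - c$ (taking $\gamma'=\gamma$), and a relative geodesic from $e$ to $\gamma\sigma\gamma$ passes uniformly close to $\gamma$. I would use this to show the map $\gamma \mapsto \gamma\sigma(\gamma)\gamma$ is ``almost injective'' in the sense that preimages have uniformly bounded size (elements mapping near the same target are uniformly close, since $\sigma$ ranges over the finite ball $B(e,C)$), and that $H(e,\gamma\sigma\gamma|r) \asymp H(e,\gamma|r)^2$ up to a uniform constant, using Ancona at the transition point near $\gamma$ together with the elementary inequality~(\ref{triangleGreen}). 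Summing over $\gamma \in \hat S_m$ therefore gives
$$
\Big(\sum_{\gamma \in \hat S_m} H(e,\gamma|r)\Big)^2 \lesssim \sum_{\gamma \in \hat S_m} H(e,\gamma|r)^2 \cdot (\#\hat S_m)^{?}
$$
— this naive Cauchy–Schwarz is the wrong direction, so instead I would argue: the targets $\gamma\sigma\gamma$ lie in $\hat S_{m'}$ for $m' \ge 2m-c$, the fibers of $\gamma \mapsto \gamma\sigma\gamma$ are uniformly bounded, and $H(e,\gamma\sigma\gamma|r)\gtrsim H(e,\gamma|r)^2$, hence
$$
\sum_{\gamma\in\hat S_m} H(e,\gamma|r)^2 \;\lesssim\; \sum_{k \ge 2m-c}\ \sum_{\delta \in \hat S_k} H(e,\delta|r).
$$
Denoting $f(m) = \sum_{\gamma \in \hat S_m} H(e,\gamma|r)$ and $f_\infty = \sup_m f(m)$, and combining with $f(m)^2 \le (\#\hat S_m) \sum_{\gamma\in\hat S_m} H(e,\gamma|r)^2$ — no; the clean route is to note that by Ancona $f(m) \lesssim f(1)^m$ would be too weak.

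Let me restate the clean plan. Set $f(m) = \sum_{\gamma \in \hat S_m} H(e,\gamma|r)$. Step one: by weak relative Ancona~(\ref{equationAncona}) applied at a transition point, each $\gamma \in \hat S_{m+n}$ factors (up to a uniform constant) as $H(e,\gamma|r) \asymp H(e,\gamma_1|r) H(\gamma_1,\gamma|r)$ with $\gamma_1 \in \hat S_m$ lying near $\gamma$ on a chosen relative geodesic; summing over $\gamma$ and using that each $\gamma_1$ is hit by boundedly many $\gamma$'s per ``continuation,'' one gets the submultiplicativity $f(m+n) \lesssim f(m) f(n)$, hence $f(m) \le A\, B^m$ for constants $A,B$ uniform in $r$. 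Step two: improve $B$ to $1$ using Lemma~\ref{lemmaYangGouezel}: since $H(e,\gamma\sigma\gamma|r) \gtrsim H(e,\gamma|r)^2$ and the fibers of $\gamma\mapsto\gamma\sigma\gamma$ over $\bigcup_{k\ge 2m-c}\hat S_k$ are uniformly bounded, we get $f(m)^2 \lesssim \sum_{k \ge 2m - c} f(k) \le \sum_{k\ge 2m-c} A B^k \lesssim A B^{2m-c}/(1-B^{-1})$ when $B>1$; but then $f(m)^2 \lesssim (AB^{-c}/(1-B^{-1})) \cdot B^{2m}$, i.e. $f(m) \lesssim B^m$ with no improvement — so instead compare at matched scale: $f(m)^2 \lesssim \sum_{k\ge 2m-c} f(k)$ and if $f(m) \to \infty$ along a subsequence this forces exponential growth, contradicting... this needs $f$ summable. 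The honest resolution: bound $\sum_{k\ge 2m-c} f(k)$ by $\sum_{k} f(k) = I^{(1)}(r)$, finite for $r < R_\mu$ — so for $r<R_\mu$ we get $f(m)^2 \lesssim I^{(1)}(r)$; but that constant blows up as $r\to R_\mu$.

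So the genuinely correct plan, which I would commit to: prove $f(m+n) \lesssim f(m) f(n)$ by Ancona (Step one above) to get $f(m) \le A B^m$ uniformly in $r\le R_\mu$; then prove via Lemma~\ref{lemmaYangGouezel} a \emph{supermultiplicativity} of the target sum, namely that for each $\gamma$ the element $\gamma\sigma\gamma$ contributes $\gtrsim H(e,\gamma|r)^2$ to $f(2m-O(1))$ with uniformly bounded multiplicity, giving $f(m)^2 \lesssim \sum_{|k-2m|\le c} f(k)$; feeding $f(k)\le AB^k$ back in yields $f(m)^2 \lesssim A B^{2m+c}$, \emph{consistent} but not a contradiction unless $B>1$ is not optimal — take $B_0 = \limsup_m f(m)^{1/m}$ (uniformly bounded, $\le B$); then $B_0^2 \le B_0^2$, still no contradiction. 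The fix is that Step one actually gives the sharper $f(m+n) \le C_0 f(m) f(n)$ with a \emph{fixed} $C_0$, so $C_0 f(m)$ is submultiplicative and $\log(C_0 f(m))/m \to \ell$ exists; and the supermultiplicativity from Lemma~\ref{lemmaYangGouezel} is $C_0 f(2m-c) \ge c_0 (C_0 f(m))^2 / C_0$, forcing $\ell \le \ell$ — I will instead directly show $f(1) < \infty$ (Guivarc'h, as in~(\ref{parabolicradiusgeq1}), gives $\sum_{s\in \hat S_1} H(e,s|r) = \sum_{\text{gens and parabolics}} H \le $ something finite via $G_{k,R_\mu}(1) < \infty$) and that the supermultiplicative inequality with $B_0 \ge 1$ forces $B_0 = 1$, whence $f(m)$ is bounded. \textbf{The main obstacle} is precisely making the uniformity in $r$ work while converting Lemma~\ref{lemmaYangGouezel} into a clean almost-injective map with controlled fibers and the lower bound $H(e,\gamma\sigma\gamma|r)\gtrsim H(e,\gamma|r)^2$; the latter lower bound requires Ancona-type control of $G(e,\gamma|r) G(\gamma, \gamma\sigma\gamma|r) \lesssim G(e,\gamma\sigma\gamma|r)$, which is exactly the content of the ``moreover'' clause of Lemma~\ref{lemmaYangGouezel} combined with weak relative Ancona~(\ref{equationAncona}), and symmetrically for the return Green functions using symmetry of $\mu$.
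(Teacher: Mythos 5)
Your proposal assembles the right ingredients — the extension Lemma~\ref{lemmaYangGouezel}, the almost-injective map $(\gamma,\gamma')\mapsto\gamma\sigma\gamma'$ with uniformly bounded fibers, and the resulting supermultiplicative inequality
$u_m(r)\,u_{m'}(r)\lesssim \sum_{j=0}^{m_0} u_{m+m'+j}(r)$,
all with constants uniform in $r$ — but it never closes the loop, and you explicitly acknowledge this ("still no contradiction"). Your final committed route, showing $B_0=\limsup_m f(m)^{1/m}=1$ and concluding boundedness, is a genuine gap: $B_0=1$ only says $f$ is subexponential, and is perfectly compatible with $f(m)\to\infty$ (e.g.\ $f(m)=m$). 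The submultiplicativity $f(m+n)\lesssim f(m)f(n)$ you introduce is also not needed and does not rescue the argument.

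The missing step is short but essential, and it is where the assumption $r<R_\mu$ is actually used. For $r<R_\mu$, Lemma~\ref{lemmafirstderivative} gives $\sum_{\gamma\in\Gamma}H(e,\gamma|r)=I^{(1)}(r)<\infty$, hence $u_m(r)\to 0$ as $m\to\infty$, so $u_m(r)$ \emph{attains} a maximum $M(r)$ at some index $k_0(r)$. Plugging $m=m'=k_0(r)$ into the supermultiplicative inequality yields $M(r)^2\leq (m_0+1)C'\,M(r)$, i.e.\ $M(r)\leq (m_0+1)C'$, a bound whose right-hand side does not depend on $r$. Monotonicity in $r$ (which you do use at the outset) then passes the bound to $r=R_\mu$. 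You had the finiteness of $I^{(1)}(r)$ in hand and even noted it, but you tried to use it as a numerical bound (which degenerates as $r\to R_\mu$) rather than as a qualitative fact guaranteeing the existence of a maximiser — that is the crucial observation. One minor additional remark: the inequality $H(e,\gamma|r)H(e,\gamma'|r)\lesssim H(e,\gamma\sigma\gamma'|r)$ only needs the elementary triangle inequality~(\ref{triangleGreen}), not Ancona; the Ancona-direction bound and the ``moreover'' clause of Lemma~\ref{lemmaYangGouezel} are what you need to control the size of the fibers of $\Psi$.
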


\begin{proof}
Since $H(\gamma,\gamma'|r)\leq H(\gamma,\gamma'|r')$ if $r\leq r'$, it suffices to prove the proposition for $r=R_{\mu}$.
Fix $r<R_{\mu}$
and write $u_m(r)=\sum_{\gamma\in \hat{S}_m}H(e,\gamma|r)$.
For $\gamma\in \hat{S}_m$, $\gamma'\in \hat{S}_{m'}$, Lemma~\ref{lemmaYangGouezel} shows that one can define a point
$\gamma \sigma \gamma'\in \underset{m+m'\leq l\leq m+m'+m_0}{\bigcup}\hat{S}_{l}$, where $m_0$ is fixed and where $d(e,\sigma)$ is bounded.
Moreover, $\gamma$ is a within a uniformly bounded distance of a relative geodesic from $e$ to $\gamma \sigma \gamma'$.
Choosing such a $\sigma$, we write $\Psi(\gamma,\gamma')=\gamma \sigma \gamma'$.
Since $d(e,\sigma)$ is bounded, we have
$$H(e,\gamma|r)H(e,\gamma'|r)\leq C_0 H(e,\gamma|r)H(e,\sigma \gamma'|r)=C_0H(e,\gamma|r)H(\gamma,\gamma \sigma \gamma'|r).$$
Also, $H(e,\gamma|r)H(\gamma,\gamma \sigma \gamma'|r)\leq C_0'H(e,\gamma\sigma\gamma')$ (see~(\ref{triangleGreen}), so that
$$H(e,\gamma|r)H(e,\gamma'|r)\leq CH(e,\gamma \sigma \gamma'|r).$$

Fix $\gamma''\in \underset{m+m'\leq l\leq m+m'+m_0}{\bigcup}\hat{S}_{l}$.
Assume that $\gamma''=\gamma \sigma \gamma'$, with $\gamma \in \hat{S}_m$ within a bounded distance of a point on a relative geodesic from $e$ to $\gamma''$ and $\sigma\in B(e,C)$.
Fix such a relative geodesic $[e,\gamma'']$ from $e$ to $\gamma''$.
There exists $\tilde{\gamma}$ on $[e,\gamma'']$ such that $d(\gamma,\tilde{\gamma})\leq C$.
Thus, we also have $\hat{d}(\gamma,\tilde{\gamma})\leq C$.
In particular, $|\hat{d}(e,\tilde{\gamma})-m|\leq C$, so there is a uniformly bounded number of distinct possibilities for $\tilde{\gamma}$.
Since $d(\gamma,\tilde{\gamma})$ is bounded, this gives a uniformly bounded number of possibilites for $\gamma$.
Also, since $d(e,\sigma)$ is bounded, there is a uniformly bounded number of possibilites for $\sigma$.
In particular, there is a uniformly bounded number of such decomposition of $\gamma''$.
In other words, the map $\Psi$ has a uniformly bounded number of preimages of $\gamma''$.

This proves that
\begin{align*}
    u_m(r)u_{m'}(r)&=\sum_{\gamma\in \hat{S}_m,\gamma'\in \hat{S}_{m'}}H(e,\gamma|r)H(e,\gamma'|r)\leq C\sum_{\gamma\in \hat{S}_m,\gamma'\in \hat{S}_{m'}}H(e,\Psi(\gamma,\gamma')|r)\\
    &\leq C'\left (\sum_{\gamma''\in \hat{S}_{m+m'}}H(e,\gamma''|r)+\cdots +\sum_{\gamma''\in \hat{S}_{m+m'+m_0}}H(e,\gamma''|r)\right )
\end{align*}
so that
\begin{equation}\label{Lemma2.5Gouezel}
    u_m(r)u_{m'}(r)\leq C'\sum_{j=0}^{m_0} u_{m+m'+j}(r).
\end{equation}

Since $r<R_{\mu}$, $\frac{d}{dr}G(e,\gamma|r)$ is finite, so that according to Lemma~\ref{lemmafirstderivative}, the sum $\sum_{\gamma\in \Gamma}H(e,\gamma|r)$ is finite.
Hence, the sequence $u_m(r)$ converges to 0 when $m$ tends to infinity, so that it reaches its maximum at some index $k_0(r)$.
This index does depend on $r$. However, denoting by $M(r)$ this maximum, Equation~(\ref{Lemma2.5Gouezel}) shows that
$M(r)^2\leq (m_0+1)C'M(r)$, so that $M(r)\leq (m_0+1)C'$.
This constant $(m_0+1)C'$ does not depend on $r$, so that we also have
$u_m(R_{\mu})\leq (m_0+1)C'$ for every $m$, which is the desired inequality.
\end{proof}

Applying Lemma~\ref{finitesumalongspheres} for $m=1$, we get the following corollary, that will be very useful.
\begin{corollary}\label{derivativeparabolicGreenfinite}
For every parabolic subgroup $\mathcal{H}\in \Omega_0$ and every $r\in [0,R_{\mu}]$, we have
$$\sum_{h\in \mathcal{H}}G(e,h|r)G(h,e|r)<+\infty.$$
\end{corollary}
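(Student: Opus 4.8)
The statement is an essentially immediate consequence of Lemma~\ref{finitesumalongspheres}, specialized to the relative sphere of radius $1$, once one observes that a peripheral subgroup sits (almost entirely) inside $\hat{S}_1$. So the plan is: first, identify $\mathcal{H}\setminus\{e\}$ with a subset of $\hat{S}_1$; second, split off the contribution of the identity; third, bound the remaining sum by the uniform constant from Lemma~\ref{finitesumalongspheres}.

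First I would recall that $\hat{\Gamma}$ is the Cayley graph of $\Gamma$ with respect to the generating set $S\cup\bigcup_{1\le n\le N}\mathcal{H}_n$. Hence every element $h\in\mathcal{H}\setminus\{e\}$ is joined to $e$ by an edge in $\hat{\Gamma}$ (labelled by $h$ itself, viewed as an element of the generating set), so $\hat{d}(e,h)\le 1$; since $h\ne e$ this forces $\hat{d}(e,h)=1$, i.e.\ $h\in\hat{S}_1$. Therefore $\mathcal{H}\setminus\{e\}\subseteq\hat{S}_1$.

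Next, fix $r\in[0,R_\mu]$ and write, with $H(e,\gamma|r)=G(e,\gamma|r)G(\gamma,e|r)$ as in Lemma~\ref{finitesumalongspheres},
$$\sum_{h\in\mathcal{H}}G(e,h|r)G(h,e|r)=G(e,e|r)^2+\sum_{h\in\mathcal{H}\setminus\{e\}}H(e,h|r)\le G(r)^2+\sum_{\gamma\in\hat{S}_1}H(e,\gamma|r).$$
By Lemma~\ref{finitesumalongspheres} the last sum is at most the uniform constant $C$, and $G(r)=G(e,e|r)\le G(e,e|R_\mu)=G(R_\mu)<+\infty$ by the result of Guivarc'h recalled above (non-elementary relatively hyperbolic groups are non-amenable, whence $G(R_\mu)$ is finite). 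Thus $\sum_{h\in\mathcal{H}}G(e,h|r)G(h,e|r)\le G(R_\mu)^2+C<+\infty$, which proves the corollary.

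There is no genuine obstacle here: the only point requiring a modicum of care is peeling off the $h=e$ term (which does not lie in $\hat{S}_1$) and controlling it separately via the already-established finiteness of $G(R_\mu)$; everything else is a direct application of Lemma~\ref{finitesumalongspheres} with $m=1$.
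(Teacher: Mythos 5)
Your proof is correct and follows the same route the paper intends: Corollary~\ref{derivativeparabolicGreenfinite} is stated in the paper as a direct consequence of Lemma~\ref{finitesumalongspheres} applied with $m=1$, which is exactly what you do. You are slightly more careful than the paper's one-line justification in that you explicitly peel off the $h=e$ term (which lies in $\hat{S}_0$, not $\hat{S}_1$) and bound it by $G(R_\mu)^2<\infty$, but this is a minor bookkeeping point and does not constitute a different approach.
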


\subsection{Relations between $G$, $G_k$ and $R_k$}\label{SectionrelationsdifferentGreens}
We prove here Theorem~\ref{maintheoremGreen}.
Recall that
$$I^{(j)}(r)=\sum_{\gamma_1,...,\gamma_j\in \Gamma}G(e,\gamma_1|r)G(\gamma_1,\gamma_2|r)...G(\gamma_{j-1},\gamma_j|r)G(\gamma_j,e|r).$$
Also define, for a parabolic subgroup $\mathcal{H}_k\in \Omega_0$,
$$I_{k}^{(j)}(r)=\sum_{\gamma_1,...,\gamma_j\in \mathcal{H}_k}G_{k,r}(e,\gamma_1|1)G_{k,r}(\gamma_1,\gamma_2|1)...G_{k,r}(\gamma_{j-1},\gamma_j|1)G_{k,r}(\gamma_j,e|1).$$
Finally, for simplicity, denote by
$G^{(j)}_{k,r}$ the $j$th derivative of the parabolic Green functions at 1, that is,
$G^{(j)}_{k,r}=\frac{d^j}{dt^j}_{|t=1}\left (G_{k,r}(e,e|t)\right )$ and by $G_{r}^{(j)}$ the $j$th derivative of the Green function on the whole group at $r$.

Beware that $G^{(j)}_{k,r}$ is a derivative of $t\mapsto G_{k,r}(e,e|t)$ and not of $r\mapsto G_{k,r}(e,e|t)$, whereas $G_{r}^{(j)}$ is a derivative of $r\mapsto G(e,e|r)$.
These notations are far from being perfect but they will be very convenient in the following.
We prove the main result of this section, which states the a priori estimates we will need to prove the local limit theorem.

\begin{proposition}\label{roughequadiff}
For every $r\in [0,R_{\mu})$, we have
$$\frac{G^{(2)}_r}{\left (G^{(1)}_r\right )^3}\asymp 1+\sum_{k}G^{(2)}_{k,r}.$$
\end{proposition}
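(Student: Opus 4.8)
The plan is to pass from the Green derivatives to the spatial sums $I^{(j)}$, reduce the statement to one inequality about $I^{(2)}(r)$, and prove that inequality by decomposing the triples $(e,\gamma_1,\gamma_2)$ occurring in $I^{(2)}(r)$ according to the position of their barycentre in the Cayley graph.

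\emph{Reduction to a combinatorial estimate.} Using Lemma~\ref{lemmafirstderivative} and Lemma~\ref{lemmageneralformuladerivatives} one has $I^{(1)}(r)=G_r+rG'_r$ and $I^{(2)}(r)=G_r+2rG'_r+\tfrac{r^2}{2}G^{(2)}_r$. Since $\Gamma$ is non-elementary, $G_r\le G_{R_\mu}<+\infty$ uniformly in $r\le R_\mu$, and a Cauchy--Schwarz bound on the power series of $G$ gives $(G'_r)^2\lesssim G_rG^{(2)}_r\lesssim 1+G^{(2)}_r$; as $G'_r$ and $G^{(2)}_r$ are nonnegative, nondecreasing, and bounded below away from $0$ for $r$ away from $0$, this yields $I^{(1)}(r)\asymp 1+G'_r$ and $I^{(2)}(r)\asymp 1+G^{(2)}_r$, with constants uniform on $[\epsilon,R_\mu)$ (only $r\to R_\mu$ matters, so we may assume $r$ bounded away from $0$, and there $(1+G'_r)^3\asymp (G'_r)^3$, $1+G^{(2)}_r\asymp G^{(2)}_r$). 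Applying the same two lemmas to the first-return walk $p_{k,r}$ on $\mathcal H_k$, together with Lemma~\ref{lemmasameGreen} (whence $I^{(2)}_k(r)=\sum_{h,h'\in\mathcal H_k}G(e,h|r)G(h,h'|r)G(h',e|r)$) and Corollary~\ref{derivativeparabolicGreenfinite} (which bounds $G^{(1)}_{k,r}$ uniformly), one gets likewise $I^{(2)}_k(r)\asymp 1+G^{(2)}_{k,r}$. As there are finitely many $k$, the proposition is equivalent to $I^{(2)}(r)\asymp I^{(1)}(r)^3\big(1+\sum_k I^{(2)}_k(r)\big)$.

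\emph{Decomposition and upper bound.} For each triple $(e,\gamma_1,\gamma_2)$ fix relative geodesics between the three points; they form a $\hat d$-thin triangle in $\hat\Gamma$ with a coarsely defined centre, whose lift to $\Cay(\Gamma,S)$ either passes within bounded $d$-distance of a common transition point $a$ of the three sides (case $T$), or traverses a long excursion through a single parabolic coset $g\mathcal H_k$ (case $P$). In case $P$, the BCP property and the fellow-travelling statements of Lemma~\ref{Proposition315Osin}, Lemma~\ref{Cfollowinggeodesics} and Lemma~\ref{lemmarelativetripod} organise the entrance/exit points of the three sides in $g\mathcal H_k$ into three transition points $a,b_1,b_2\in g\mathcal H_k$: $[e,\gamma_i]$ enters at $a$ and leaves towards $\gamma_i$ at $b_i$, and $[\gamma_1,\gamma_2]$ runs from $b_1$ to $b_2$ (case $T$ is the degenerate instance $a=b_1=b_2$). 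Weak relative Ancona~(\ref{equationAncona}) at the transition points $a,b_1,b_2$, with~(\ref{triangleGreen}) and symmetry of $\mu$, gives
$$G(e,\gamma_1|r)G(\gamma_1,\gamma_2|r)G(\gamma_2,e|r)\lesssim H(e,a|r)\,\big[G(a,b_1|r)G(b_1,b_2|r)G(b_2,a|r)\big]\,H(b_1,\gamma_1|r)\,H(b_2,\gamma_2|r).$$
Translating by $g^{-1}$ and using Lemma~\ref{lemmasameGreen}, the bracket becomes $G_{k,r}(h_0,h_1|1)G_{k,r}(h_1,h_2|1)G_{k,r}(h_2,h_0|1)$ with $h_j\in\mathcal H_k$ (and $=1$ in case $T$), and $H(b_i,\gamma_i|r)=H(e,b_i^{-1}\gamma_i|r)$. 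Summing over each tail $b_i^{-1}\gamma_i$ gives a factor $\le \sum_{\delta\in\Gamma}H(e,\delta|r)=I^{(1)}(r)$; over $b_1,b_2$ in the coset, a factor $\le I^{(2)}_k(r)$ by translation invariance; over $a$ (equivalently the coset), a factor $\le I^{(1)}(r)$; and summing the finitely many $k$ together with the case-$T$ term $\lesssim I^{(1)}(r)^3$ yields the upper bound. That each triple comes from only boundedly many data tuples $(k,a,b_1,b_2,\text{tails})$ is a counting argument from the BCP property and finiteness of the balls $B_C$, exactly as in the proof of Lemma~\ref{finitesumalongspheres}.

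\emph{Lower bound and main difficulty.} For the converse one produces enough triples realising each structure. For the leading term: given $\delta_0,\delta_1,\delta_2\in\Gamma$, iterate Lemma~\ref{lemmaYangGouezel} to splice in bounded words and build $\gamma_1,\gamma_2$ forming a genuine tripod with a transition-point centre, so that the lower Ancona inequality gives $G(e,\gamma_1|r)G(\gamma_1,\gamma_2|r)G(\gamma_2,e|r)\gtrsim H(e,\delta_0|r)H(e,\delta_1|r)H(e,\delta_2|r)$; as the assignment is boundedly many-to-one, $I^{(2)}(r)\gtrsim I^{(1)}(r)^3$. For the parabolic terms: given $h_1,h_2\in\mathcal H_k$, an anchor $\delta_0$, and tails $\delta_1,\delta_2$ whose relative geodesics from $e$ do not begin with an element of $\mathcal H_k$, splice bounded words to build $\gamma_1,\gamma_2$ realising a hub $(a,b_1,b_2)=(g,gh_1,gh_2)$ and conclude $G(e,\gamma_1|r)G(\gamma_1,\gamma_2|r)G(\gamma_2,e|r)\gtrsim H(e,\delta_0|r)H(e,\delta_1|r)H(e,\delta_2|r)\,G_{k,r}(e,h_1|1)G_{k,r}(h_1,h_2|1)G_{k,r}(h_2,e|1)$, then sum over $h_1,h_2,\delta_0,\delta_1,\delta_2$ and $k$. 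The main obstacle is precisely this lower bound: one must simultaneously guarantee that the spliced bounded words create no backtracking and preserve the transition-point structure so that Ancona applies at every hub point; that the tails genuinely leave the hub coset (which forces restricting the $\delta_i$ to a positive-density subfamily whose $H$-sum is still $\asymp I^{(1)}(r)$); and that the whole construction is injective up to bounded multiplicity. This is the analogue of, and is technically heavier than, the corresponding step in \cite{Gouezel1}, the parabolic excursions being the new source of difficulty.
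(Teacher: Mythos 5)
Your proposal follows essentially the same route as the paper's proof: the reduction to $I^{(2)}(r)\asymp I^{(1)}(r)^3\bigl(1+\sum_k I^{(2)}_k(r)\bigr)$ via Lemmas~\ref{lemmafirstderivative}--\ref{lemmageneralformuladerivatives} and Corollary~\ref{derivativeparabolicGreenfinite}, then a tripod/projection decomposition and weak relative Ancona for the upper bound, then an explicit construction of triples via the extension lemma (Lemma~\ref{lemmaYangGouezel}) for the lower bound. You correctly identify the lower bound as the crux.

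One remark on the lower bound, which you leave as a sketch: you do not actually need to pass to a ``positive-density subfamily'' of tails whose relative geodesics avoid a first $\mathcal{H}_k$-jump. What the paper does instead is \emph{precondition} every $\gamma\in\Gamma$ rather than restrict: Lemma~\ref{lemmafinitesetloxodromics} produces a fixed finite set of loxodromics whose axes project near $e$ on $\mathcal{H}_k$, and the proof of Lemma~\ref{lemmaYangGouezel} then shows that for every $\gamma$ one can choose a bounded-length prefix $\gamma_2$ (built from such a loxodromic) so that $\tilde{\gamma}_\sigma=\sigma\gamma_2\gamma$ lies in the shadow $\mathcal{O}(\sigma)$, i.e.\ projects within $c/3$ of $\sigma$. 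Since $d(e,\gamma_2)$ is uniformly bounded, $H(e,\gamma|r)\lesssim H(\sigma,\tilde{\gamma}_\sigma|r)$, and the map $\gamma\mapsto\tilde{\gamma}_\sigma$ is injective for each choice of $\gamma_2$, so the full sum $I^{(1)}(r)$ is transported, with no density argument. The disjointness of the shadows $\mathcal{O}(e),\mathcal{O}(\sigma),\mathcal{O}(\sigma')$ (for $d(e,\sigma),d(e,\sigma'),d(\sigma,\sigma')\geq c$), the applicability of weak Ancona at $e,\sigma,\sigma'$, and the bounded multiplicity (recovering the hub from $(\overline{\gamma},\overline{\gamma}')$ via the nearest-point projection of $\overline{\gamma}'$ onto $[e,\overline{\gamma}]$) are exactly the three items on your to-do list; all follow from the BCP property once $c$ is chosen large. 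So the plan is sound, but these three verifications are the substance of the lower bound and need to be carried out, not merely listed.
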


Using the relations between $G_{k,r}^{(j)}$ and $I_{k}^{(j)}$ established in the Section~\ref{SectionCombinatorialGreen}, we can restate the statement of the proposition as
\begin{equation}\label{roughequadiffI_2}
    \frac{I^{(2)}(r)}{I^{(1)}(r)^3}\asymp 1+ \sum_k I^{(2)}_{k}(r).
\end{equation}

In particular, Theorem~\ref{maintheoremGreen} is an immediate consequence of Proposition~\ref{roughequadiff}.
The proof of~(\ref{roughequadiffI_2}) is very technical and we first explain roughly the arguments.
Recall that
$$I^{(2)}(r)=\sum_{\gamma,\gamma'}G(e,\gamma|r)G(\gamma,\gamma'|r)G(\gamma',e|r).$$
We fix two elements $\gamma$ and $\gamma'$ and consider the moment relative geodesics from $e$ to $\gamma$ and from $e$ to $\gamma'$ start diverging.
Typically, two such relative geodesics will roughly follow each other up to some $\gamma_0$ and then diverge inside parabolic subgroups.
Then to go from $e$ to $\gamma$, the random walk passes near $\gamma_0$, according to weak relative Ancona inequalities.
Similarly, to go from $\gamma'$ to $e$, it will pass near $\gamma_0$.

If the two relative geodesic diverge into different parabolic subgroups, then to go from $\gamma$ to $\gamma'$, the random walk also has to pass near $\gamma_0$, as suggested by the picture below.
\begin{center}
\begin{tikzpicture}[scale=2.2]
\draw (-1.8,.5)--(-2,-.4);
\draw (2.2,.5)--(2,-.4);
\draw (-2,-.4)--(2,-.4);
\draw (0,0)--(0,.8);
\draw (0,-1)--(0,-.4);
\draw[dotted] (0,-.4)--(0,0);
\draw (0,0)--(1,0);
\draw (-.3,0)node{$\gamma_0$};
\draw (-.3,-1) node{$e$};
\draw (1.2,0) node{$\gamma$};
\draw (-.3,.8) node{$\gamma'$};
\end{tikzpicture}
\end{center}
Summing over these $\gamma,\gamma'$, we thus transform the sum in $I^{(2)}$ into
$$\asymp \sum_{\gamma,\gamma'}G(e,\gamma_0)G(\gamma_0,e)G(\gamma_0,\gamma)G(\gamma,\gamma_0)G(\gamma_0,\gamma')G(\gamma',\gamma_0).$$
Changing the sum over $\gamma$ and $\gamma'$ with a sum over $\gamma_0$, $\gamma_0^{-1}\gamma$ and $\gamma_0^{-1}\gamma'$, we obtain a sum $\Sigma \asymp (I^{(1)})^3$.

\medskip
On the contrary, if the two relative geodesic diverge into the same parabolic subgroups, then we obtain a triangle inside the corresponding parabolic subgroup $\mathcal{H}=\mathcal{H}_k$, as illustrated by the picture below.
\begin{center}
\begin{tikzpicture}[scale=2.2]
\draw (-1.8,.5)--(-2,-.4);
\draw (2.2,.5)--(2,-.4);
\draw (-2,-.4)--(2,-.4);
\draw (0,-1.3)--(0,-.4);
\draw[dotted] (0,-.4)--(0,0);
\draw (0,0)--(.8,-.1);
\draw (0,0)--(1,.3);
\draw[dotted] (.8,-.1)--(.8,-.4);
\draw (.8,-.4)--(.8,-1);
\draw (1,.3)--(1,1.1);
\draw (.8,-.1)--(1,.3);
\draw (-.2,0)node{$\gamma_0$};
\draw (-.3,-1.3) node{$e$};
\draw (1,-.15) node{$\gamma_0\sigma$};
\draw (1.2,.3) node{$\gamma_0\sigma'$};
\draw (1,-1) node{$\gamma$};
\draw (1.2,1.1) node{$\gamma'$};
\end{tikzpicture}
\end{center}
This time, to go from $\gamma$ to $\gamma'$, the random walk does not have to pass near $\gamma_0$, but it has to pass near $\gamma_0\sigma$ and $\gamma_0\sigma'$, the respective projections of $\gamma$ and $\gamma'$ on $\gamma_0\mathcal{H}_k$.
Then, summing over these $\gamma,\gamma'$ we get
\begin{align*}
    \asymp \sum_{\gamma,\gamma'}\sum_{\sigma ,\sigma'\in H}G(e,\gamma_0)&G(\gamma_0,e)G(e,\sigma)G(\sigma,\sigma')G(\sigma',e)\\
    &G(\gamma_0\sigma,\gamma)G(\gamma,\gamma_0\sigma)G(\gamma_0\sigma',\gamma')G(\gamma',\gamma_0\sigma').
\end{align*}
Changing this last sum with a sum over $\gamma_0$, $\sigma$, $\sigma'$, $\sigma^{-1}\gamma_0^{-1}\gamma$ and $(\sigma')^{-1}\gamma_0^{-1}\gamma'$, we obtain a sum $\Sigma\asymp (I^{(1)})^3I_k^{(2)}$.
Summing over all possible $\mathcal{H}_k$, we get~(\ref{roughequadiffI_2}).

\begin{proof}
Let us now give the formal proof.
Recall that henever $f$ and $g$ are two functions satisfying that there exists $C$ such that $f\leq Cg$, we write
$f\lesssim g$.
In order to prove~(\ref{roughequadiffI_2}), we need to prove an upper bound and a lower bound.
We start proving the upper bound, that is,
\begin{equation}\label{roughequadiffupperbound}
     \frac{I^{(2)}(r)}{I^{(1)}(r)^3}\lesssim 1+ \sum_k I_k^{(2)}(r).
\end{equation}
Consider, for each $\gamma\in \Gamma$, a relative geodesic $[e,\gamma]$ from $e$ to $\gamma$.
Also assume that if $\gamma'\in [e,\gamma]$, then the chosen relative geodesic $[e,\gamma']$ coincides with the restriction of $[e,\gamma]$ from $e$ to $\gamma'$.
This is possible using for example the automaton $\mathcal{G}$ given by Theorem~\ref{thmcodingrelhypgroups}.
We want to control the sum
$$I^{(2)}(r)=\sum_{\gamma,\gamma'}G(e,\gamma|r)G(\gamma,\gamma'|r)G(\gamma',e|r).$$
If $\gamma$ is fixed, we consider for every $\gamma'$ the nearest point projection of $\gamma'$ on $[e,\gamma]$.
If there are two possible such projections, we choose the closest to $e$.
For $\gamma_0\in [e,\gamma]$, we denote by $\Gamma_{\gamma_0}(\gamma)$ the set of $\gamma'$ such that this nearest point projection is at $\gamma_0$.
Also, when $\gamma_0$ is fixed, we denote by $\Gamma^{\gamma_0}$ the set of $\gamma$ such that $\gamma_0\in [e,\gamma]$.
We can bound $I^{(2)}$ from above by
$$\sum_{\gamma_0\in \Gamma}\sum_{\gamma\in \Gamma^{\gamma_0}}\sum_{\gamma'\in \Gamma_{\gamma_0}(\gamma)}G(e,\gamma|r)G(\gamma,\gamma'|r)G(\gamma',e|r).$$
According to Lemma~\ref{lemmarelativetripod}, a relative geodesic from $\gamma'$ to $e$ passes within a bounded distance of $\gamma_0$.
Weak relative Ancona inequalities then show that
$$G(e,\gamma|r)\asymp G(e,\gamma_0|r)G(\gamma_0,\gamma|r)\text{ and }G(\gamma',e|r)\asymp G(\gamma',\gamma_0|r)G(\gamma_0,e|r).$$
We thus get
\begin{equation}\label{proofroughequadiff-1}
\begin{split}
    I^{(2)}(r)\lesssim \sum_{\gamma_0\in \Gamma}\sum_{\gamma\in \Gamma^{\gamma_0}}\sum_{\gamma'\in \Gamma_{\gamma_0}(\gamma)}G(e,\gamma_0|r)&G(\gamma_0,\gamma|r)\\
    &G(\gamma,\gamma'|r)G(\gamma',\gamma_0|r)G(\gamma_0,e|r).
\end{split}
\end{equation}

\medskip
We fix $\gamma_0$.
We claim that
\begin{equation}\label{proofroughequadiff0}
    \sum_{\gamma\in \Gamma^{\gamma_0}}\sum_{\gamma'\in \Gamma_{\gamma_0}(\gamma)}G(\gamma_0,\gamma|r)G(\gamma,\gamma'|r)G(\gamma',\gamma_0|r)\lesssim I^{(1)}(r)^2(1+\sum_{k}I_k^{(2)}(r)).
\end{equation}

Our goal is to prove this claim.
By translating by $\gamma_0$ the relative geodesics $[e,\gamma_0^{-1}\gamma]$ and $[e,\gamma_0^{-1}\gamma']$, we get relative geodesics $[\gamma_0,\gamma]$ and $[\gamma_0,\gamma']$ from $\gamma_0$ to $\gamma$ and from $\gamma_0$ to $\gamma'$ respectively.
We distinguish the different elements $\gamma$ in $\Gamma^{\gamma_0}$ according to the first jump of $[\gamma_0,\gamma]$.
This first jump can be in some parabolic subgroup $\mathcal{H}_k$ or in the generating set $S$.
Fors simplicity, we write $S=\mathcal{H}_0$ in all this proof.
We denote by $\Gamma^{\gamma_0}_k$ the subset of $\Gamma^{\gamma_0}$ consisting of elements $\gamma$ such that the first jump is in $\mathcal{H}_k$, $0\leq k \leq N$.
Notice that we do not ask that the geodesic from $e$ to $\gamma$ satisfies that the first jump after $\gamma_0$ is in $\mathcal{H}_k$.
We really ask that the translated geodesic from $\gamma_0$ to $\gamma$ starts with a jump in $\mathcal{H}_k$.
We do the same for $\gamma'\in \Gamma_{\gamma_0}(\gamma)$ and denote by $\Gamma_{\gamma_0}^j(\gamma)$ the corresponding set of $\gamma'$ such that the first jump of $[\gamma_0,\gamma']$ is in $\mathcal{H}_j$.
For $\gamma\in \Gamma^{\gamma_0}_k$, we denote by $\sigma \in \mathcal{H}_k$ the first jump of $[\gamma_0,\gamma]$.
Similarly for $\gamma'\in \Gamma_{\gamma_0}^j(\gamma)$, we denote by $\sigma'\in \mathcal{H}_j$ the first jump of $[\gamma_0,\gamma']$.

\medskip
We prove that any relative geodesic $[\gamma,\gamma']$ from $\gamma$ to $\gamma'$ has to pass within a bounded distance of $\gamma_0\sigma$ and then $\gamma_0\sigma'$.
The projection of $\gamma'$ on $[e,\gamma]$ is on $\gamma_0$.
Denote by $\sigma''$ the first jump of $[e,\gamma]$ after $\gamma_0$.
The second part of Lemma~\ref{lemmarelativetripod} shows that $[\gamma,\gamma']$ passes within a bounded distance of $\gamma_0\sigma''$.
The only problem is that in our above construction, we changed the relative geodesic from $e$ to $\gamma$ translating the relative geodesic from $e$ to $[\gamma_0^{-1}\gamma]$.
Denote the new relative geodesic by $[[e,\gamma]]$
and denote by $\gamma_1$ the projection of $\gamma'$ on $[[e,\gamma]]$.
Then, the hyperbolicity of $\hat{\Gamma}$ together with the BCP property show that $d(\gamma_0,\gamma_1)\leq a_0$ for some $a_0$ independent of $\gamma$ and $\gamma'$.

Assume first that $d(e,\sigma)\geq a_0+1$.
Then, $\gamma_1$ has to be between $e$ and $\gamma_0$ on $[[e,\gamma]]$.
Since we did not change the part between $e$ and $\gamma_0$, we deduce that $\gamma_1$ also is between $\gamma_0$ and $\gamma$.
This proves that in this situation, $\gamma_1=\gamma_0$.
Similarly, if $d(e,\sigma')\geq a_1$ for some uniform $a_1$, then the BCP property shows that $\gamma_1=\gamma_0$.
In both cases, the second part of Lemma~\ref{lemmarelativetripod} indeed shows that $[\gamma,\gamma']$ passes within a bounded distance of $\gamma_0\sigma$ and then $\gamma_0\sigma'$.

We are left with the case where $d(e,\sigma)$ and $d(e,\sigma')$ are uniformly bounded.
In this case, the BCP property shows that $d(e,\sigma'')$ is also uniformly bounded,
that is,  $d(\gamma_0,\gamma_0\sigma'')$ is uniformly bounded.
Hence, $[\gamma,\gamma']$ passes within a bounded distance of $\gamma_0$, so within a bounded distance of $\gamma_0\sigma$ and $\gamma_0\sigma'$.

\medskip
Using the weak relative Ancona inequalities, we get
\begin{equation}\label{proofroughequadiff1}
\begin{split}
    &\sum_{\gamma\in \Gamma^{\gamma_0}}\sum_{\gamma'\in \Gamma_{\gamma_0}(\gamma)}G(\gamma_0,\gamma|r)G(\gamma,\gamma'|r)G(\gamma',\gamma_0|r)\\
    \asymp &\sum_{k,j}\sum_{\gamma\in \Gamma^{\gamma_0}_k}\sum_{\gamma'\in \Gamma_{\gamma_0}^j(\gamma)}G(e,\sigma|r)G(\gamma_0\sigma,\gamma|r)G(\gamma,\gamma_0\sigma|r)\\
    &\hspace{4cm}G(\sigma,\sigma'|r)G(\gamma_0\sigma',\gamma'|r)G(\gamma',\gamma_0\sigma'|r)G(\sigma',e|r).    
\end{split}
\end{equation}

We first fix $k$ and consider only the sum over $j\neq k$ in~(\ref{proofroughequadiff1}).
In this case, a relative geodesic from $\sigma$ to $\sigma'$ passes within a bounded distance of $e$.
This is obviously true if $d(e,\sigma)$ or $d(e,\sigma')$ is bounded and follows from the BCP property if both $d(e,\sigma)$ and $d(e,\sigma')$ are large enough.
We get that 
\begin{align*}
    &\sum_{j\neq k}\sum_{\gamma\in \Gamma^{\gamma_0}_k}\sum_{\gamma'\in \Gamma_{\gamma_0}^j(\gamma)}G(e,\sigma|r)G(\gamma_0\sigma,\gamma|r)G(\gamma,\gamma_0\sigma|r)\\
    &\hspace{4cm}G(\sigma,\sigma'|r)G(\gamma_0\sigma',\gamma'|r)G(\gamma',\gamma_0\sigma|r)G(\sigma',e|r)\\
    &\asymp \sum_{j\neq k}\sum_{\gamma\in \Gamma^{\gamma_0}_k}\sum_{\gamma'\in \Gamma_{\gamma_0}^j(\gamma)}G(\gamma_0,\gamma|r)G(\gamma,\gamma_0|r)G(\gamma_0,\gamma'|r)G(\gamma',\gamma_0|r).
\end{align*}
Translating everything by $\gamma_0^{-1}$, we bound this last term by
$$\sum_{j\neq k}\sum_{\gamma\in \Gamma^{e}_k}\sum_{\gamma'\in \Gamma_{e}^j(\gamma)}G(e,\gamma|r)G(\gamma,e|r)G(e,\gamma'|r)G(\gamma',e|r).$$
Indeed, by definition of $\Gamma^{\gamma_0}_k$, if $\gamma\in \Gamma^{\gamma_0}_k$, then $\gamma_0^{-1}\gamma\in \Gamma^{e}_k$.
Also, if the projection of $\gamma'$ on $[e,\gamma]$ is at $\gamma_0$, then the projection of $\gamma_0^{-1}\gamma'$ on $[\gamma_0^{-1},\gamma_0^{-1}\gamma]$ is at $e$.
Thus, the projection of $\gamma_0^{-1}\gamma'$ on $[e,\gamma_0^{-1}\gamma]$ also is at $e$, so that $\gamma_0^{-1}\gamma'\in \Gamma_{e}^j(\gamma)$.

Fix $\gamma\in \Gamma^{e}_k$.
If $\gamma'\in \Gamma_e^j(\gamma)$, in particular, the geodesic $[e,\gamma']$ starts with a jump in $\mathcal{H}_j$ so that $\gamma'\in \Gamma^e_j$.
We can thus bound the last sum by
$$\sum_{j\neq k}\sum_{\gamma\in \Gamma^{e}_k}\sum_{\gamma'\in\Gamma^{e}_j}G(e,\gamma|r)G(\gamma,e|r)G(e,\gamma'|r)G(\gamma',e|r)$$
which is itself bounded by
$$I^{(1)}(r)\sum_{\gamma\in \Gamma^{e}_k}G(e,\gamma|r)G(\gamma,e|r).$$
Summing over $k$, we finally have
$$\sum_k\sum_{j\neq k}\sum_{\gamma\in \Gamma^{\gamma_0}_k}\sum_{\gamma'\in \Gamma_{\gamma_0}^j(\gamma)}G(\gamma_0,\gamma|r)G(\gamma,\gamma_0|r)G(\gamma_0,\gamma'|r)G(\gamma',\gamma_0|r)\lesssim I^{(1)}(r)^2.$$

\medskip
To prove~(\ref{proofroughequadiff0}), we can thus only focus on the case where $j=k$ in~(\ref{proofroughequadiff1}).
We get
\begin{align*}
    &\sum_k\sum_{\gamma\in \Gamma^{\gamma_0}_k}\sum_{\gamma'\in \Gamma_{\gamma_0}^k(\gamma)}G(e,\sigma|r)G(\gamma_0\sigma,\gamma|r)G(\gamma,\gamma_0\sigma|r)\\
    &\hspace{4cm}G(\sigma,\sigma'|r)G(\gamma_0\sigma',\gamma'|r)G(\gamma',\gamma_0\sigma|r)G(\sigma',e|r).\\
\end{align*}
Translating $\gamma$ by $\sigma^{-1}\gamma_0^{-1}$ and $\gamma'$ by $(\sigma')^{-1}\gamma_0^{-1}$ on the left, we bound this sum by
\begin{align*}
\sum_{k}\sum_{\sigma,\sigma'\in \mathcal{H}_k}\sum_{\tilde{\gamma}}\sum_{\tilde{\gamma}'}G(e,\sigma|r)G(\sigma,\sigma'|r)G(\sigma',e|r)G(e,&\tilde{\gamma}|r)G(\tilde{\gamma},e|r)\\
&G(e,\tilde{\gamma}'|r)G(\tilde{\gamma}',e|r).
\end{align*}
We thus have
\begin{align*}
    &\sum_k\sum_{\gamma\in \Gamma^{\gamma_0}_k}\sum_{\gamma'\in \Gamma_{\gamma_0}^k(\gamma)}G(e,\sigma|r)G(\gamma_0\sigma,\gamma|r)G(\gamma,\gamma_0\sigma|r)\\
    &\hspace{4cm}G(\sigma,\sigma'|r)G(\gamma_0\sigma',\gamma'|r)G(\gamma',\gamma_0\sigma|r)G(\sigma',e|r)\\
    &\lesssim I^{(1)}(r)^2\sum_{k}I_k^{(2)}(r)
\end{align*}
which proves~(\ref{proofroughequadiff0}), which in turn, combined with~(\ref{proofroughequadiff-1}), proves~(\ref{roughequadiffupperbound}).

We now prove the lower bound, that is, we show that
\begin{equation}\label{roughequadifflowerbound}
     I^{(1)}(r)^3\left (1+ \sum_k I_k^{(2)}(r)\right )\lesssim I^{(2)}(r).
\end{equation}
If there is no parabolic subgroup or if they are all finite, then $\Gamma$ is hyperbolic and the result is given by \cite[Proposition~3.2]{Gouezel1}.
So we can assume that there is at least one parabolic subgroup and that it is infinite.
Since $I_k^{(2)}$ is bounded from below, it is sufficient to show that for every $1\leq k \leq N$,
$I^{(1)}(r)^3I_k^{(2)}(r)\lesssim I^{(2)}(r)$.
We fix such a $k$.
By definition,
$$I_k^{(2)}(r)=\sum_{\sigma,\sigma'\in \mathcal{H}_k}G(e,\sigma|r)G(\sigma,\sigma'|r)G(\sigma',e|r).$$
Up to a bounded multiplicative error, we can replace the sum over $\sigma$ and $\sigma'$ by a sum over $\sigma$ and $\sigma'$ such that $d(e,\sigma)\geq c$ and $d(e,\sigma')\geq c$, for some fixed $c$ that will be chosen later.
Also, according to Corollary~\ref{derivativeparabolicGreenfinite}, $\sum_{\sigma''\in \mathcal{H}_k}G(e,\sigma''|r)G(\sigma'',e|r)$ is finite, so that we can assume that $d(\sigma,\sigma')\geq c$ in the above sum.
We fix some large loxodromic element $\gamma_0\in\Gamma$ such that its projection on $\mathcal{H}_k$ in $\hat{\Gamma}$ is within a bounded $d$-distance of $e$.
According to Lemma~\ref{lemmaYangGouezel}, for every $\gamma\in \Gamma$, there exists $\gamma_1\in B(e,C)$ such that a relative geodesic from $e$ to $\tilde{\gamma}=\gamma_0\gamma_1\gamma$ passes within a bounded distance of $\gamma_0$.
In particular, the projection of $\tilde{\gamma}$ on $\mathcal{H}_k$ is within a bounded $d$-distance of $e$.

Denote by $\mathcal{O}(e)$ the set of $\gamma$ such that the projection of $\gamma$ on $\mathcal{H}_k$ is at $d$-distance at most $c/3$ of $e$.
We can reformulate the above discussion as follows, provided $c$ is large enough.
For every $\gamma \in \Gamma$, there exists $\gamma_2$ such that $d(e,\gamma_2)$ is uniformly bounded and $\tilde{\gamma}_e=\gamma_2\gamma$ is in $\mathcal{O}(e)$.
Also, for $\sigma\in \mathcal{H}_k$, denote by $\mathcal{O}(\sigma)$ the set of $\gamma$ such that its projection on $\mathcal{H}_k$ is at $d$-distance at most $c/3$ from $\sigma$.
Lemma~\ref{lemmaYangGouezel} shows that for any $\gamma$, there exists $\gamma_2$ such that $d(e,\gamma_2)$ is uniformly bounded and any relative geodesic from $e$ to $\tilde{\gamma}_{\sigma}=\sigma\gamma_2\gamma$ passes within a bounded distance of $\sigma$.
If $c$ is large enough, then $\tilde{\gamma}_{\sigma}\in \mathcal{O}(\sigma)$.
Recall that $d(e,\sigma)\geq c$, $d(e,\sigma')\geq c$ and $d(\sigma,\sigma')\geq c$ in the above sum.
According to the BCP property, if $c$ is large enough, we also have that
\begin{enumerate}
    \item the sets $\mathcal{O}(\sigma)$, $\mathcal{O}(\sigma')$ and $\mathcal{O}(e)$ are all disjoint,
    \item if $\gamma \in \mathcal{O}(\sigma)$ and $\gamma'\in \mathcal{O}(\sigma')$, then any relative geodesic from $\gamma$ to $\gamma'$ passes first within a bounded distance of $\sigma$, then within a bounded distance of $\sigma'$, and similarly for $\mathcal{O}(e)$ and $\mathcal{O}(\sigma)$ and for $\mathcal{O}(\sigma')$ and $\mathcal{O}(e)$.
\end{enumerate}

Let $\gamma\in \Gamma$, then we have
$$G(e,\gamma|r)G(\gamma,e|r)\lesssim G(e,\tilde{\gamma}_{e}|r)G(\tilde{\gamma}_e,e|r).$$
Also, for any $\sigma\in \mathcal{H}_k$, we have
$$G(e,\gamma|r)G(\gamma,e|r)\lesssim G(\sigma,\tilde{\gamma}_{\sigma}|r)G(\tilde{\gamma}_{\sigma},\sigma|r).$$
We thus get, using weak relative Ancona inequalities,
\begin{align*}
    I^{(1)}(r)^3I_k^{(2)}(r)&\lesssim \sum_{\sigma,\sigma'}G(e,\sigma|r)\left (\sum_{\gamma}G(\sigma,\tilde{\gamma}_{\sigma}|r)G(\tilde{\gamma}_{\sigma},\sigma|r)\right )\\
    &\hspace{1cm}G(\sigma,\sigma'|r)\left (\sum_{\gamma'}G(\sigma',\tilde{\gamma}'_{\sigma'}|r)G(\tilde{\gamma}'_{\sigma'},\sigma'|r)\right )\\
    &\hspace{1cm}G(\sigma',e|r)\left (\sum_{\gamma''}G(e,\tilde{\gamma}''_{e}|r)G(\tilde{\gamma}''_{e},e|r)\right ).\\
    &\lesssim \sum_{\sigma,\sigma'}\sum_{\gamma,\gamma',\gamma''}G(\tilde{\gamma}''_{e},\tilde{\gamma}_{\sigma}|r)G(\tilde{\gamma}_{\sigma},\tilde{\gamma}'_{\sigma'}|r)G(\tilde{\gamma}'_{\sigma'},\tilde{\gamma}''_{e}|r).
\end{align*}
The sets $\mathcal{O}(e)$, $\mathcal{O}(\sigma)$ and $\mathcal{O}(\sigma')$ are all disjoint.
For $\tilde{\gamma} \in \mathcal{O}(\sigma)$, $\tilde{\gamma}'\in \mathcal{O}(\sigma')$ and $\tilde{\gamma}''\in \mathcal{O}(e)$, let $\overline{\gamma}=(\tilde{\gamma}'')^{-1}\tilde{\gamma}$ and $\overline{\gamma}'=(\tilde{\gamma}'')^{-1}\tilde{\gamma}'$.
Then, the projection of $\overline{\gamma}'$ in $\hat{\Gamma}$ on a relative geodesic $[e,\overline{\gamma}]$ is within a bounded $d$-distance of $(\tilde{\gamma}'')^{-1}$.
In other words, up to a bounded error, $\tilde{\gamma}, \tilde{\gamma}',\tilde{\gamma}''$ determine $\overline{\gamma}$ and $\overline{\gamma}'$.
Moreover, for fixed $\gamma$, there is a uniformly finite number of $\sigma$ such that $\gamma \in \mathcal{O}(\sigma)$, according to the BCP property.
This proves that
$$I^{(1)}(r)^3I_k^{(2)}(r)\lesssim \sum_{\overline{\gamma},\overline{\gamma}'}G(e,\overline{\gamma}|r)G(\overline{\gamma},\overline{\gamma}'|r)G(\overline{\gamma}',e|r)=I^{(2)}(r).$$
This proves~(\ref{roughequadifflowerbound}), which concludes the proof.
\end{proof}

We thus proved Theorem~\ref{maintheoremGreen}.
More generally, we have the following combinatorial result, which relates $I^{(j)}(r)$ with $I_k^{(j)}(r)$.

\begin{lemma}\label{roughequadiffhighorder}
There exists a constant $C> 1$ such that for every $r\in [0,R_{\mu}]$, for every $j\geq 2$,
\begin{align*}
    &\frac{I^{(j)}(r)}{I^{(1)}(r)}\leq C \sum_{l\geq 2}C^{l}\sum_{i_1+...+i_l=j}I^{(i_1)}(r)...I^{(i_l)}(r) \left (1+\sum_{1\leq k\leq N}I_{k}^{(l)}(r)\right )+\\
    &\sum_{m\geq 2}\sum_{j_1+...j_m=j}C^m\prod_{p=1}^m\left (\sum_{l\geq 1}C^{l}\sum_{i_1+...+i_l=j_p}I^{(i_1)}(r)...I^{(i_l)}(r) \left (1+\sum_{1\leq k\leq N}I_{k}^{(l)}(r)\right )\right ).
\end{align*}
\end{lemma}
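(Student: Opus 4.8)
This is an upper bound, and it refines the upper-bound half of Proposition~\ref{roughequadiff} (hence of Theorem~\ref{maintheoremGreen}); the plan is to generalize that argument, organizing things as an induction on $j$. For $j=2$ there is nothing new: Proposition~\ref{roughequadiff} gives $I^{(2)}(r)\lesssim I^{(1)}(r)^3\bigl(1+\sum_k I_k^{(2)}(r)\bigr)$, which is exactly the first term with $l=2$, $i_1=i_2=1$. For general $j$, write $I^{(j)}(r)=\sum_{\gamma_1,\dots,\gamma_j}G(e,\gamma_1|r)G(\gamma_1,\gamma_2|r)\cdots G(\gamma_{j-1},\gamma_j|r)G(\gamma_j,e|r)$ and read each summand as a closed loop $e\to\gamma_1\to\cdots\to\gamma_j\to e$. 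Using the automaton of Theorem~\ref{thmcodingrelhypgroups} I would fix, compatibly, a relative geodesic $[e,\gamma_i]$ for each $i$; then, by hyperbolicity of $\hat{\Gamma}$ together with the BCP property, the concatenation $[e,\gamma_1]\cup[\gamma_1,\gamma_2]\cup\cdots\cup[\gamma_j,e]$ lies within a uniform neighbourhood of a finite tree $T$ rooted at $e$, with leaves among the $\gamma_i$, which the loop traverses edge by edge.

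The core step is to cut the loop at the branch point $\gamma_0$ of $T$ closest to $e$. By the tripod Lemma~\ref{lemmarelativetripod} every relative geodesic $[\gamma_i,\gamma_{i+1}]$ (as well as $[\gamma_j,e]$ and $[e,\gamma_1]$) passes within bounded distance of $\gamma_0$, or of the appropriate entering/exit points of a coset $\gamma_0\mathcal{H}_k$ in the parabolic case, so the weak relative Ancona inequalities~(\ref{equationAncona}) let me factor each Green term along the loop through $\gamma_0$ (respectively through the projections $\gamma_0\sigma_p$ of the strands onto $\gamma_0\mathcal{H}_k$), uniformly in $r$, exactly as in the proof of Proposition~\ref{roughequadiff}. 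Translating $\gamma_0\mapsto e$ and summing over the position of $\gamma_0$ produces a free factor $\sum_{\gamma_0}G(e,\gamma_0|r)G(\gamma_0,e|r)=I^{(1)}(r)$, which is the $I^{(1)}(r)$ in the denominator of the statement. After this cut there are two shapes. If the loop genuinely fans out at $\gamma_0$ into $l\ge 2$ strands, carrying consecutive blocks of $i_1,\dots,i_l$ of the $\gamma_i$ with $\sum i_p=j$, then each strand is a sub-loop based at $\gamma_0$ and, extending its inner sum to all of $\Gamma^{i_p}$, is bounded by $I^{(i_p)}(r)$; the junction at $\gamma_0$ contributes $1$ if the strands do not all leave into one parabolic coset, and, if they leave into $\gamma_0\mathcal{H}_k$, contributes $\sum_{\sigma_1,\dots,\sigma_l\in\mathcal{H}_k}G(e,\sigma_1|r)G(\sigma_1,\sigma_2|r)\cdots G(\sigma_l,e|r)=I_k^{(l)}(r)$ (using $G_{k,r}(h,h'|1)=G(h,h'|r)$ from Lemma~\ref{lemmasameGreen}); summing over all such decompositions and absorbing constants into a single $C>1$ yields the first term. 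Otherwise $\gamma_0$ is within bounded distance of $e$, and the loop makes $m\ge 2$ successive sub-excursions from a bounded neighbourhood of $e$, carrying consecutive blocks of $j_1,\dots,j_m$ of the $\gamma_i$ with $\sum j_p=j$; each such excursion is again a loop with $j_p<j$ marked points, to which I apply the fan analysis of the previous case once more — now allowing the trivial case $l=1$, bounded directly by $I^{(j_p)}(r)$ — and taking the product over $p$ produces the second, nested term.

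The main obstacle is the parabolic bookkeeping at $\gamma_0$. When several strands leave $\gamma_0$ inside one coset $\gamma_0\mathcal{H}_k$ one must pin down the correct entering/exit (projection) points via Lemma~\ref{lemmaboundedprojection} and Lemma~\ref{lemmarelativetripod}, check that the Green function really does factor through them with a constant independent of $r$, and verify that after translation the residual sum over $\sigma_1,\dots,\sigma_l\in\mathcal{H}_k$ is precisely the Green-moment expression $I_k^{(l)}(r)$ rather than a cyclic variant; this is exactly where Proposition~\ref{roughequadiff} is genuinely harder than its hyperbolic analogue. One also has to invoke Corollary~\ref{derivativeparabolicGreenfinite} (equivalently Lemma~\ref{finitesumalongspheres}) to discard the degenerate sub-sums in which two consecutive $\gamma_i$ remain at bounded distance, so that the loop really does decompose into honest lower-order sub-loops. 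The remaining, purely combinatorial point — that the accumulated multiplicative constants are of the form $C^l$ and $C^m$ — is harmless, since each invocation of Ancona and of the geometric lemmas costs only a fixed constant and each of the $j$ marked points is involved in boundedly many factorizations; summing over the finitely many combinatorial types for fixed $j$ then gives the stated inequality.
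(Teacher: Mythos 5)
Your overall skeleton — cut the closed loop $e\to\gamma_1\to\cdots\to\gamma_j\to e$ at the last common point $\gamma_0$ of the relative geodesics, translate by $\gamma_0^{-1}$, and use the tripod lemma together with weak relative Ancona to factor Green functions through $\gamma_0$ and through projection points in the parabolic cosets — is the right one, and indeed matches the paper. But the case distinction you impose on the fan is not the correct one, and as written your first case produces a false bound for the ``mixed'' sub-sum.

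Concretely, the paper's dichotomy is not ``fan out at $\gamma_0$'' versus ``$\gamma_0$ within bounded distance of $e$''; $\gamma_0$ is always the first branch point and can be arbitrarily far from $e$. The relevant quantity is the parabolic subgroup into which each first jump $\sigma_i$ of the translated geodesic $[\gamma_0,\gamma_i]$ falls. If all $\sigma_i$ lie in the same $\mathcal{H}_k$, one subdivides further according to when the $\sigma_i$ separate by more than $c$ inside that coset, gets $l\ge 2$ blocks (the maximality of $\gamma_0$ forces $l\ge 2$), and the ``junction'' cost is $I_k^{(l)}(r)$: this is the first term. If the first jumps split into $m\ge 2$ consecutive groups lying in distinct parabolics $\mathcal{H}_{k_1},\dots,\mathcal{H}_{k_m}$, one uses that the geodesic between the last $\gamma_i$ of one group and the first $\gamma_i$ of the next passes near $\gamma_0$ (because the entries are into different cosets, by BCP), so $G(\gamma_{j_1},\gamma_{j_1+1}|r)\lesssim G(\gamma_{j_1},\gamma_0|r)G(\gamma_0,\gamma_{j_1+1}|r)$, and then one repeats the within-a-coset subdivision inside each group — now with $l\ge 1$ allowed. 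That two-level decomposition is exactly why the second term is a product over $p$ of fan sums, and it has nothing to do with $\gamma_0$ being close to $e$.

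Your case~1, with the junction ``contributing $1$ if the strands do not all leave into one parabolic coset,'' silently assumes the strands go to $l$ pairwise distinct places; it does not cover the case where, say, strands $1,2$ enter $\gamma_0\mathcal{H}_1$ and strands $3,4$ enter $\gamma_0\mathcal{H}_2$. In that configuration the geodesic from the end of strand~1 to the start of strand~2 does not pass near $\gamma_0$ (it stays deep in $\gamma_0\mathcal{H}_1$), so the Ancona factorization must run through the projection points $\gamma_0\sigma_1,\gamma_0\sigma_2\in\gamma_0\mathcal{H}_1$, and the resulting sub-sum is comparable to $I^{(1)}(r)^4\,I_1^{(2)}(r)\,I_2^{(2)}(r)$ — which is \emph{not} $\lesssim C^4 I^{(1)}(r)^4$, since $I_k^{(2)}(r)\ge G(e,e|r)^3\ge 1$ and can be arbitrarily large. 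Such mixed configurations are precisely what the nested second term is there to absorb, and they must be triggered by ``first jumps land in several distinct cosets,'' not by the position of $\gamma_0$. Once you replace your trigger by the parabolic-grouping criterion, the rest of your argument (fan within each group, $l=1$ allowed inside groups, product over groups, plus the finiteness input from Corollary~\ref{derivativeparabolicGreenfinite}) lines up with the paper's proof.
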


\begin{proof}
The proof goes the same way as above, but instead of summing over $\gamma,\gamma'$, we have to sum over $\gamma_1,\gamma_2,...,\gamma_j$ so we have to be more precise.
We choose for every $\gamma$ a relative geodesic $[e,\gamma]$ from $e$ to $\gamma$.
We denote by $\gamma_0$ the last element of $[e,\gamma_1]$ such that relative geodesics from $e$ to $\gamma_1,...,\gamma_j$ pass a $d$-distance at most $c$ from $\gamma_0$, for some $c$ that will be chosen later.

First, we consider the sub-sum $\Sigma_1$ over points $\gamma_1,...,\gamma_j$ such that the first jump $\sigma_i$ of $[\gamma_0,\gamma_i]$ is in the same parabolic subgroup $\mathcal{H}_k$, for every $1\leq i\leq j$.
Fix a constant $c$ and denote by $i_1$ the last index such that $d(\sigma_i,\sigma_1)\leq c$.
As in the proof of Lemma~\ref{roughequadiff}, if $c$ is large enough, then a relative geodesic from $\gamma_{i_1}$ to $\gamma_{i_1+1}$ has to pass within a bounded distance of $\gamma_0\sigma_i$.
Weak relative Ancona inequalities then give
$$G(\gamma_{i_1},\gamma_{i_1+1}|r)\lesssim G(\gamma_{i_1},\gamma_0\sigma_{1}|r)G(\sigma_{1},\sigma_{i_1+1}|r)G(\gamma_0\sigma_{i_1+1},\gamma_{i_1+1}|r).$$
Denote then $i_2$ the last index such that $d(\sigma_{i_1+i_2},\sigma_{i_1+1})\leq c$.
Similarly,
$$G(\gamma_{i_2},\gamma_{i_2+1}|r)\lesssim G(\gamma_{i_2},\gamma_0\sigma_{i_1+1}|r)G(\sigma_{i_1+1},\sigma_{i_2+1}|r)G(\gamma_0\sigma_{i_2+1},\gamma_{i_2+1}|r).$$
We go on and get a decomposition of $j$ as $i_1+i_2+...+i_l$.
Combining all the inequalities above, we have
\begin{align*}
    &G(\gamma_0,\gamma_1|r)G(\gamma_1,\gamma_2|r)...G(\gamma_{j-1},\gamma_j|r)G(\gamma_j,\gamma_0|r)\leq \\
    &C^{l}G(e,\sigma_1|r)G(\sigma_1,\sigma_{(i_1+1)}|r)...G(\sigma_{(i_1+...+i_{l-1}+1)},e|r)\\
    &G(\gamma_0\sigma_1,\gamma_1|r)G(\gamma_1,\gamma_2)...G(\gamma_{i_1-1},\gamma_{i_1}|r)G(\gamma_{i_1},\gamma_0\sigma_1|r)\\
    &G(\gamma_0\sigma_{i_1+1},\gamma_{i_1+1}|r)G(\gamma_{i_1+1},\gamma_{i_1+2}|r)...G(\gamma_{i_1+i_2-1},\gamma_{i_1+i_2}|r)G(\gamma_{i_1+i_2},\gamma_0\sigma_{i_1+1}|r)\\
    &\hspace{6cm}...\\
    &G(\gamma_0\sigma_{(i_1+...+i_{l-1}+1)},\gamma_{i_1+...+i_{l-1}+1}|r)G(\gamma_{i_1+...+i_{l-1}+1},\gamma_{i_1+...+i_{l-1}+2}|r)...\\
    &\hspace{5cm}...G(\gamma_{i-1},\gamma_j|r)G(\gamma_j,\gamma_0\sigma_{(i_1+...+i_{l-1}+1)}|r),
\end{align*}
for some constant $C\geq 0$.
By definition of $\gamma_0$, we necessarily have $l\geq 2$.

We obtain $\Sigma_1$ by summing over every such decomposition of $j$ and over such $\sigma_1$,...,$\sigma_{(i_1+...+i_{l-1}+1)}$.
The second line in the right-hand side will give a contribution bounded by $I^{(i_1)}(r)$, the third line by $I^{(i_2)}(r)$ and so on.
The first line will give a contribution bounded by $I_k^{(l)}(r)$, so this contribution will be bounded by
$\sum_kI_k^{(l)}(r)$.
We thus get
$$\Sigma_1\leq \sum_{l\geq 2}\sum_{i_1+...+i_l=j}C^{l}I^{(i_1)}(r)...I^{(i_l)}(r)\left (1+\sum_{1\leq k\leq N}I_k^{(l)}(r)\right).$$

Assume now that the jumps $\sigma_1,...,\sigma_{j_1}$ are in the same parabolic subgroup $\mathcal{H}_{k_1}$ and that the other jumps $\sigma_{j_1+1},...,\sigma_{j}$ lie in some other parabolic subgroup $\mathcal{H}_{k_2}$.
Then,
$$G(\gamma_{j_1},\gamma_{j_1+1}|r)\leq CG(\gamma_{j_1},\gamma_0|r)G(\gamma_0,\gamma_{j_1+1}|r).$$
We then get an upper bound for
$$G(\gamma_0,\gamma_1|r)...G(\gamma_{j_1-1},\gamma_{j_1}|r)G(\gamma_{j_1},\gamma_0|r)$$
and
$$G(\gamma_0,\gamma_{j_1+1}|r)...G(\gamma_{j-1},\gamma_{j}|r)G(\gamma_{j},\gamma_0|r)$$
as above, except that we do not necessarily have $l\geq 2$.
Summing over all such possible decomposition, we finally get a sum $\Sigma_2$ which satisfies
\begin{align*}
    \Sigma_2\leq C&\left (\sum_{l\geq 1}C^{l+1}\sum_{i_1+...+i_l=j_1}I^{(i_1)}(r)...I^{(i_l)}(r)\left (1+\sum_{1\leq k\leq N}I_{k}^{(l)}(r)\right )\right )\\
&\left (\sum_{l\geq 1}C^{l+1}\sum_{i_1+...+i_l=j-j_1}I^{(i_1)}(r)...I^{(i_l)}(r)\left (1+\sum_{1\leq k\leq N}I_{k}^{(l)}(r)\right )\right ).
\end{align*}

To conclude, we decompose in general $j$ as $j_1+...+j_m$, where $\sigma_1,...,\sigma_{j_1}$ lie in the same parabolic $\mathcal{H}_{k_1}$, $\sigma_{j_1+1},...,\sigma_{j_1+j_2}$ lie in the same $\mathcal{H}_{k_2}$ and so on.
We similarly get sums $\Sigma_m$ which satisfy
\begin{align*}
    \Sigma_m\leq &\sum_{j_1+...j_m=j}C^{2m}\prod_{p=1}^m\left (\sum_{l\geq 1}C^{3l}\sum_{i_1+...+i_l=j_p}I^{(i_1)}(r)...I^{(i_l)}(r)\right .\\
    &\hspace{6cm}\left .\left (1+\sum_{1\leq k\leq N}I_{k}^{(l)}(r)\right )\right ).
\end{align*}
This proves the lemma, summing over all possible $m$.
\end{proof}

We can use this upper bound to prove the second main result of this subsection, namely that spectral degenerescence of the measure $\mu$ implies that $\mu$ is divergent.

\begin{proposition}\label{relationR_kG'(R)}
If for parabolic subgroup $\mathcal{H}_k\in \Omega_0$, $1\leq k\leq N$, we have $R_k>1$, then $\frac{d}{dr}_{|r=R_\mu}G(e,e|r)=+\infty$.
\end{proposition}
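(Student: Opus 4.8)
The plan is to argue by contradiction: suppose $\mu$ is convergent, i.e.\ $\frac{d}{dr}_{|r=R_\mu}G(e,e|r)<+\infty$. By Lemma~\ref{lemmafirstderivative} with $\gamma_1=\gamma_2=e$, together with $\frac{d}{dr}(rG(e,e|r))=G(e,e|r)+r\frac{d}{dr}G(e,e|r)$ and $G(e,e|R_\mu)<+\infty$, this is equivalent to $I^{(1)}(R_\mu)<+\infty$. I will then show, using the hypothesis $R_k>1$ for every $k$, that $r\mapsto G(e,e|r)$ must be analytic at $R_\mu$, which is absurd since $R_\mu$ is the radius of convergence of the power series $\sum_n\mu^{*n}(e)r^n$ and this series has non-negative coefficients.

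The first step is to convert the hypothesis into exponential control on the parabolic quantities. For each $k$, the function $t\mapsto G_{k,R_\mu}(e,e|t)$ is analytic on $\{|t|<R_k\}$, hence analytic at $t=1$ with radius of analyticity $R_k-1>0$; so its Taylor coefficients at $t=1$, namely $\tfrac{1}{l!}G^{(l)}_{k,R_\mu}$, grow at most exponentially in $l$. Applying Proposition~\ref{propderivativesGreensubexponential} to the first-return walk $p_{k,R_\mu}$ on $\mathcal{H}_k$, evaluated at the parameter $t=1\in[1,R_k]$, we deduce that $I^{(l)}_k(R_\mu)$ grows at most exponentially in $l$; in particular each $I^{(l)}_k(R_\mu)$ is finite. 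Since there are finitely many $k$, there are constants $c\ge 0$ and $C_0>1$ with $1+\sum_{k}I^{(l)}_k(R_\mu)\le cC_0^l$ for all $l\ge 1$.

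The second step is to propagate finiteness and exponential growth from $I^{(1)}$ to all $I^{(j)}$ using Lemma~\ref{roughequadiffhighorder} at $r=R_\mu$. Put $a_j=I^{(j)}(R_\mu)\in[0,+\infty]$, so $a_0=G(e,e|R_\mu)<+\infty$ and $a_1<+\infty$ by assumption. The key structural observation is that in the right-hand side of Lemma~\ref{roughequadiffhighorder} every factor $I^{(i)}(R_\mu)$ that occurs has index $i<j$: in the first sum because $l\ge 2$ forces each $i_p\le j-1$, and in the second because $m\ge 2$ forces each $j_p\le j-1$, hence each inner index $i_q\le j_p<j$. Thus the lemma reads $a_j\le a_1\,P_j(a_1,\dots,a_{j-1})$ for $j\ge 2$, where $P_j$ is a polynomial with non-negative coefficients assembled from the numbers $C^l\big(1+\sum_k I^{(l)}_k(R_\mu)\big)\le c(CC_0)^l$. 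Introducing $\Phi(z)=\sum_{l\ge 2}C^l\big(1+\sum_k I^{(l)}_k(R_\mu)\big)z^l$ and $\Psi(z)=\sum_{l\ge 1}C^l\big(1+\sum_k I^{(l)}_k(R_\mu)\big)z^l$, which have non-negative coefficients, are analytic near $0$, and satisfy $\Phi(0)=\Phi'(0)=0$ and $\Psi(0)=0$, the recursion is majorized, coefficientwise, by the equation
\[
Y=a_1 x+a_1\Big(C\,\Phi(Y)+\sum_{m\ge 2}C^m\Psi(Y)^m\Big).
\]
Its right-hand side, as a function of $Y$, vanishes to order at least $2$ at $Y=0$, so the implicit function theorem provides a solution $Y(x)=\sum_{j\ge 1}Y_jx^j$ analytic near $x=0$ with $Y(0)=0$, whose coefficients $Y_j$ are non-negative, finite, and satisfy $Y_j\le\kappa\Lambda^j$ for suitable $\kappa,\Lambda$. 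A strong induction using $a_1=Y_1$ and the monotonicity of $P_j$ then gives $a_j\le Y_j$ for all $j$; in particular every $I^{(j)}(R_\mu)$ is finite and grows at most exponentially in $j$.

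To conclude, apply Proposition~\ref{propderivativesGreensubexponential} once more, now to $\mu$ on $\Gamma$ at $r=R_\mu$ (the direction that, as noted right after that proposition, remains valid at $r=R_\mu$): the exponential bound on $I^{(j)}(R_\mu)$ forces $\tfrac{1}{j!}G^{(j)}(e,e|R_\mu)\le c_1C_1^j$ for all $j$, so the Taylor series of $r\mapsto G(e,e|r)$ at $R_\mu$ has positive radius of convergence and $G(e,e|\cdot)$ is analytic in a neighbourhood of $R_\mu$. This contradicts the fact that $R_\mu$ is the radius of convergence of $\sum_n\mu^{*n}(e)r^n$. Hence $I^{(1)}(R_\mu)=+\infty$, i.e.\ $\frac{d}{dr}_{|r=R_\mu}G(e,e|r)=+\infty$. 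The main obstacle is the third step: one must observe that the recursion of Lemma~\ref{roughequadiffhighorder} is well-founded and then run the majorant/implicit-function argument carefully, so that both finiteness and the exponential bound are transported from $I^{(1)}$ to every $I^{(j)}$ — the hypothesis $R_k>1$ entering precisely through the analyticity at $0$ of $\Phi$ and $\Psi$ (which fails in general when some $R_k=1$).
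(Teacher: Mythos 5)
Your proposal is correct and follows essentially the same route as the paper: argue by contradiction from $I^{(1)}(R_\mu)<\infty$, convert $R_k>1$ into an exponential bound on $I_k^{(l)}(R_\mu)$ via Proposition~\ref{propderivativesGreensubexponential}, feed this into the recursion of Lemma~\ref{roughequadiffhighorder}, majorize the resulting well-founded recursion by a generating function solved via the implicit function theorem to get an exponential bound on $I^{(j)}(R_\mu)$, and then contradict the non-analyticity of $G$ at $R_\mu$ by another application of Proposition~\ref{propderivativesGreensubexponential}. The only cosmetic difference is that you invoke the implicit function theorem abstractly on the majorant functional equation, while the paper works out the explicit cubic polynomial equation for $J(z)$ and checks its roots at $z=0$ are simple; both give the needed positive radius of convergence.
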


\begin{proof}
If for every $k$, $R_k>1$, then Proposition~\ref{propderivativesGreensubexponential} shows that for every $j\geq 2$ and for every $k$, $I_{k}^{(j)}(R_{\mu})\leq c_0C_0^j$ for some $c_0\geq0$ and $C_0>0$.
Lemma~\ref{roughequadiffhighorder} shows that for some $c\geq 0$ and $C>0$,
\begin{equation}\label{equationI_j}
\begin{split}
    &\frac{I^{(j)}(R_{\mu})}{I^{(1)}(R_{\mu})}\leq c\sum_{l\geq 2}C^{l}\sum_{i_1+...+i_l=j}I^{(i_1)}(R_{\mu})...I^{(i_l)}(R_{\mu})\\
    &\hspace{.5cm}+c\sum_{m\geq 2}\sum_{j_1+...j_m=j}C^m\prod_{p=1}^m\left (\sum_{l\geq 1}C^{l}\sum_{i_1+...+i_l=j_p}I^{(i_1)}(R_{\mu})...I^{(i_l)}(R_{\mu})\right ).
\end{split}
\end{equation}
Notice that we can enlarge $C$ if necessary.
We will prove by contradiction that $\frac{d}{dr}_{|r=R_\mu}G(e,e|r)=+\infty$.
Our goal is to prove that $I^{(j)}(R_{\mu})$ grows at most exponentially.
Define inductively $J_1=cI^{(1)}(R_{\mu})$ and for $j\geq 2$,
\begin{align*}
    J_j=&cI^{(1)}(R_{\mu})\sum_{l\geq 2}C^{l}\sum_{i_1+...+i_l=j}J_{i_1}...J_{i_l}\\
    &\hspace{.5cm}+cI^{(1)}(R_{\mu})\sum_{m\geq 2}\sum_{j_1+...j_m=j}C^m\prod_{p=1}^m\left (\sum_{l\geq 1}C^{l}\sum_{i_1+...+i_l=j_p}J^{(i_1)}...J^{(i_l)}\right ).
\end{align*}

First, since $\frac{d}{dr}_{|r=R_\mu}G(e,e|r)<+\infty$, Lemma~\ref{lemmafirstderivative} shows that $I^{(1)}(R_{\mu})$ is finite, hence $J_1$ also is finite.
Thus,~(\ref{equationI_j}) combined with an induction argument shows that
\begin{equation}\label{I_jsmallerJ_j}
    I^{(j)}(R_{\mu})\leq J_j.
\end{equation}

Our goal is now to prove that $J_j$ grows at most exponentially in $j$.
Define the power series
$$J(z)=\sum_{j\geq 1}J_jz^j.$$
Assume first that its radius of convergence is positive.
Using the definition of $J_j$, we get
\begin{align*}
    \frac{J(z)}{J_1}&=z+\sum_{j\geq 2}\left (\sum_{l\geq 2}C^{l}\sum_{j_1+...+j_l=j}J_{j_1}...J_{j_l}\right )z^j\\
    &\hspace{1cm}+\sum_{j\geq 2}\sum_{m\geq 2}\sum_{j_1+...j_m=j}C^m\prod_{p=1}^m\left (\sum_{l\geq 1}C^{l}\sum_{i_1+...+i_l=j_p}J_{i_1}...J_{i_l}\right )z^j\\
    &=z+\sum_{l\geq 2}\left (\sum_{j\geq 2}\sum_{j_1+...+j_l=j}\left (J_{j_1}z^{j_1}\right )...\left (J_{j_l}z^{j_l}\right )\right )C^l\\
    &\hspace{1cm}+\sum_{m\geq 2}\sum_{j\geq 2}\sum_{j_1+...j_m=j}C^m\prod_{p=1}^m\left (\sum_{l\geq 1}C^{l}\sum_{i_1+...+i_l=j_p}J_{i_1}...J_{i_l}\right )z^j.
\end{align*}
The Cauchy product formula shows that for $l\geq 2$,
$$\sum_{j\geq 2}\sum_{j_1+...+j_l=j}\left (J_{j_1}z^{j_1}\right )...\left (J_{j_l}z^{j_l}\right )=J(z)^l.$$
We thus get
\begin{align*}
\frac{J(z)}{J_1}=&z+\sum_{l\geq 2}C^lJ(z)^l\\
&+\sum_{m\geq 2}\sum_{j\geq 2}\sum_{j_1+...j_m=j}C^m\prod_{p=1}^m\left (\sum_{l\geq 1}C^{l}\sum_{i_1+...+i_l=j_p}J_{i_1}...J_{i_l}\right )z^j.
\end{align*}
Let 
$$K_q=\sum_{l\geq 1}C^l\sum_{i_1+...+i_l=q}J_{i_1}...J_{i_l}$$
and let
$K(z)=\sum_{q\geq 1}K_qz^q$.
Then, similarly,
$$\sum_{m\geq 2}\sum_{j\geq 2}\sum_{j_1+...j_m=j}C^m\prod_{p=1}^m\left (\sum_{l\geq 1}C^{l}\sum_{i_1+...+i_l=j_p}J_{i_1}...J_{i_l}\right )z^j=\sum_{m\geq2}C^mK(z)^m$$
We get
$$\frac{J(z)}{J_1}=z+\sum_{l\geq 2}C^lJ(z)^l+\sum_{m\geq2}C^mK(z)^m=z+\frac{C^2J(z)^2}{1-CJ(z)}+\frac{C^2K(z)^2}{1-CK(z)}.$$
We also have
$$K(z)=\sum_{l\geq 1}C^lJ(z)^l=\frac{CJ(z)}{1-CJ(z)},$$
so that we finally get
\begin{equation}\label{polynomialequation}
    aJ(z)^3-b(z)J(z)^2+c(z)J(z)-d(z)=0.
\end{equation}
where
\begin{align*}
    a&=C(C+C^2)\left (\frac{1}{J_1}+C\right ),\\
    b(z)&=\frac{2C+C^2}{J_1}+C^2+C^4+(C^2+C^3)z,\\
    c(z)&=\frac{1}{J_1}+(2C+C^2)z,\\
    d(z)&=z.
\end{align*}
As noticed above, we can enlarge $C$.
For $z=0$, the polynomial equation
\begin{equation}\label{polynomialequationsolution}
    ax^3-b(0)x^2+c(0)x-d(0)=0
\end{equation}
has three solutions, namely $x=0$ and
$$x=\frac{J_1C^4+C^2(1+J_1)+2C\pm C^2\sqrt{C^4J_1^2+2 C^2 J_1 (1 + J_1)+4C J_1+ (J_1-1)^2}}{2C(C+C^2)(1+J_1C)}.$$
As $C$ tends to infinity, the second solution tends to 1
and the third one is asymptotic to $\frac{-4}{2C(C+C^2)(1+J_1C)}<0$.
Thus, if $C$ is large enough, the three solutions are distinct.
We fix such a $C$.
The implicit function Theorem shows that there is an analytic function $\tilde{J}(z)$ of $z$ in a neighborhood of 0 which is a solution of~(\ref{polynomialequationsolution}) and satisfies $\tilde{J}(0)=0$.
This proves that the radius of convergence of $\tilde{J}(z)$ is positive.
Moreover, the coefficients of the power series defined by $J(z)$ and $\tilde{J}(z)$ satisfy the same induction equation, so that they coincide and the radius of convergence of $J(z)$ is positive.

In particular, there exists
$d_0\geq 0$, $D_0>0$ such that
$J_j\leq d_0D_0^j$, so that according to~(\ref{I_jsmallerJ_j}),
$I^{(j)}(R_{\mu})\leq dD^jI^{(1)}(R_{\mu})$.
Proposition~\ref{propderivativesGreensubexponential} shows that $\frac{G^{(j)}_r}{k!}\leq dD^j$ at $r=R_{\mu}$, for some $d\geq0$ and $D>0$, which is a contradiction.
Thus, $I^{(1)}=+\infty$, that is, $G^{(1)}_{R_{\mu}}=+\infty$.
This concludes the proof.
\end{proof}

\section{From the rough estimates of the Green function to the rough local limit theorem}\label{SectionTauberian}
Our goal in this section is to prove Theorem~\ref{maintheorem}.
We start with the following result, which was proved by Mateusz Kwaśnicki in \cite{354036}.
We reproduce the proof for simplicity.

\begin{theorem}\label{thmweakTauberian}
Let $A(z)=\sum a_nz^n$ be a power series with non-negative coefficients $a_n$ and radius of convergence 1.
Let $\beta>0$.
Then, $\sum_{n\geq 0}a_ns^n  \asymp 1/(1-s)^\beta$ for $s\in [0,1)$ if and only if $\sum_{k=0}^na_k\asymp n^{\beta}$.
The implicit constants are asked not to depend on $s$ and $n$ respectively.
\end{theorem}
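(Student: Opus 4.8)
The plan is to reduce the statement to the partial sums by a summation by parts, and then to use the monotonicity of those partial sums for the genuinely Tauberian implication. Set $S_n=\sum_{k=0}^n a_k$ and $F(s)=\sum_{n\geq 0}S_n s^n$. Since the $a_n$ are non-negative, the sequence $(S_n)$ is non-decreasing, and interchanging the order of summation in $\sum_n\sum_{k\leq n}a_k s^n$ gives the identity $A(s)=(1-s)F(s)$ for $s\in[0,1)$; in particular $F$ also has radius of convergence $1$, and the statement to be proved becomes: $F(s)\asymp(1-s)^{-\beta-1}$ iff $S_n\asymp n^\beta$.

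For the ``only if'' direction (the easy one) I would compare with the binomial series $\sum_{n\geq 0}\binom{n+\beta}{n}s^n=(1-s)^{-\beta-1}$, using $\binom{n+\beta}{n}\asymp n^\beta$ for $n\geq 1$ (which follows from $\Gamma(n+\beta+1)/\Gamma(n+1)\sim n^\beta$, with constants depending only on $\beta$). Thus if $S_n\asymp n^\beta$ for $n\geq 1$, then $F(s)=S_0+\sum_{n\geq 1}S_n s^n\asymp(1-s)^{-\beta-1}$, and multiplying by $1-s$ yields $A(s)\asymp(1-s)^{-\beta}$, the constant term $S_0$ being harmless because $(1-s)^{-\beta}\geq 1$.

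The ``if'' direction is the real content; here I would assume $F(s)\asymp(1-s)^{-\beta-1}$ and specialize to $s=1-1/n$. First, the upper bound on $S_n$: by monotonicity $F(1-1/n)\geq S_n\sum_{k=n}^{2n}(1-1/n)^k$, and since $(1-1/n)^{2n}\to e^{-2}>0$ the block sum is $\gtrsim n$, so comparing with $F(1-1/n)\lesssim n^{\beta+1}$ gives $S_n\lesssim n^\beta$. Second, the lower bound on $S_n$: split $F(1-1/n)=\sum_{k\leq An}S_k(1-1/n)^k+\sum_{k>An}S_k(1-1/n)^k$ for a large integer $A$; using the bound $S_k\lesssim k^\beta$ just obtained together with $(1-1/n)^k\leq e^{-k/n}$ and a comparison of the series with an integral, the tail is bounded by a constant times $n^{\beta+1}\int_A^\infty t^\beta e^{-t}\,dt$, which tends to $0$ as $A\to\infty$. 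Choosing $A$ large enough makes the tail at most half of $F(1-1/n)\gtrsim n^{\beta+1}$, so the head $\sum_{k\leq An}S_k(1-1/n)^k\leq(An+1)S_{An}$ is $\gtrsim n^{\beta+1}$, whence $S_{An}\gtrsim n^\beta\asymp(An)^\beta$; monotonicity of $(S_m)$ then propagates this lower bound from the multiples of $A$ to all large $m$, and finitely many small indices are absorbed into the implicit constants.

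The step I expect to be the main obstacle is the tail estimate in the lower bound: one must quantitatively rule out the possibility that $F(1-1/n)$ is large only because of the contribution of indices $k\gg n$. This is exactly where the Tauberian hypothesis enters — it is the monotonicity of $(S_n)$ that forces the a priori bound $S_k\lesssim k^\beta$, and this is why the upper bound on $S_n$ must be established before the lower bound — together with the elementary but crucial decay estimate $(1-1/n)^k\leq e^{-k/n}$.
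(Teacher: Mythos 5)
Your proof is correct, but it takes a genuinely different route from the paper's. The paper's proof is a direct appeal to a black-box result from $O$-regular variation theory: it sets $U(x)=\sum_{k\leq x}a_k$, observes that the Laplace--Stieltjes transform is $\hat U(t)=A(\mathrm e^{-t})$, and invokes \cite[Theorem~2.10.2]{BinghamGoldieTeugels}, which states that for a non-decreasing $U$ the three conditions ``$U\in OR$'', ``$t\mapsto\hat U(1/t)\in OR$'', and ``$U(t)\asymp\hat U(1/t)$'' are equivalent; the theorem then drops out after noting that both $(1-\mathrm e^{-1/t})^{-\beta}$ and $n^\beta$ are $O$-regularly varying. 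You instead give a self-contained elementary proof: after the summation-by-parts reduction $A(s)=(1-s)F(s)$ with $F(s)=\sum_n S_ns^n$ and $S_n=\sum_{k\leq n}a_k$, the Abelian direction follows by comparison with $\sum\binom{n+\beta}{n}s^n=(1-s)^{-\beta-1}$, and the Tauberian direction exploits the monotonicity of $S_n$ twice: first to get the a priori upper bound $S_n\lesssim n^\beta$ by bounding a single block $n\leq k\leq 2n$ from below, and then to control the tail $\sum_{k>An}S_k(1-1/n)^k$ so that the head $\sum_{k\leq An}S_k(1-1/n)^k$ must carry a constant fraction of $F(1-1/n)\gtrsim n^{\beta+1}$, giving $S_{\lfloor An\rfloor}\gtrsim n^\beta$. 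The paper's route is shorter but opaque to a reader unfamiliar with the BGT machinery; yours is longer but makes the role of the monotonicity hypothesis (the actual Tauberian condition) explicit, and it correctly identifies why the order of the two bounds matters. Minor remarks: your labels ``if'' and ``only if'' are swapped relative to the order of the statement, though you prove both implications; you should note that the tail/integral comparison $\sum_{k>An}k^\beta\mathrm e^{-k/n}\leq\int_{An}^{\infty}t^\beta\mathrm e^{-t/n}\,dt$ requires $A>\beta$ so that the integrand is decreasing on the range; and in propagating the lower bound you want $n=\lfloor m/A\rfloor$ (so $\lfloor An\rfloor\leq m$ and monotonicity goes the right way). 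None of these affect the substance of the argument.
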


\begin{proof}
Following \cite{BinghamGoldieTeugels}, we denote by $OR$ the set of positive functions $f$ such that for every $\lambda>0$,
$$\limsup_{x\to \infty}\frac{f(\lambda x)}{f(x)}<\infty.$$
Then, \cite[Theorem~2.10.2]{BinghamGoldieTeugels} shows that for every non-decreasing measurable function $U$ with positive $\liminf$ and vanishing on $(-\infty,0)$, the following are equivalent
\begin{enumerate}
    \item $U\in OR$,
    \item $t\mapsto \hat{U}(1/t)\in OR$,
    \item $U(t)\asymp \hat{U}(1/t), t>0$.
\end{enumerate}
Here, $\hat{U}$ is the Laplace transform of the Stieltjes measure $U(dx)$.
Precisely,
$$\hat{U}(t)=\int_{0}^{\infty}\mathrm{e}^{-t x}U(dx).$$

We apply this to $U(x)=\sum_{k=0}^{\lfloor x\rfloor}a_k$.
We then have
$$\hat{U}(t)=\sum_{n\geq 0}a_n\mathrm{e}^{-tn}=A(\mathrm{e}^{-t}).$$
Assuming that $A(s)=\sum_{n\geq 0}a_ns^n \asymp 1/(1-s)^\beta$ for $s\in [0,1)$, we get
$$\hat{U}(1/t)=A(\mathrm{e}^{-1/t})\asymp (1-\mathrm{e}^{-1/t})^{-\beta},$$ so that $t\mapsto \hat{U}(1/t)\in OR$.
Thus, $U(t)\asymp \hat{U}(1/t)$ and so
$$\sum_{k=0}^na_k\asymp (1-\mathrm{e}^{-1/n})^{-\beta}\asymp n^{\beta}.$$

Conversely, assuming that $\sum_{k=0}^na_k\asymp n^{\beta}$, then $U\in OR$ and so we also have $U(t)\asymp \hat{U}(1/t)$.
Consequently,
$A(\mathrm{e}^{-1/t})\asymp t^\beta$ and so
$$\sum_{n\geq 0}a_ns^n  \asymp 1/(1-s)^\beta.$$
This concludes the proof.
\end{proof}

We want to apply Theorem~\ref{thmweakTauberian} to prove Theorem~\ref{maintheorem}.
Let $\Gamma$ be a non-elementary relatively hyperbolic group.
Let $\mu$ be a finitely supported, admissible and symmetric probability measure on $\Gamma$.
Assume that the corresponding random walk is aperiodic and spectrally positive-recurrent.
We write $G'(r)=\frac{d}{dr}G(e,e|r)$ and $G''(r)=\frac{d^2}{dr^2}G(e,e|r)$ for simplicity.
Theorem~\ref{maintheoremGreen} shows that
$$\frac{G''(r)}{(G'(r))^3}\asymp 1.$$
Letting $r\leq R<R_\mu$, integrating these two inequalities between $r$ and $R$ yields
$$\frac{1}{G'(r)^2}-\frac{1}{G'(R)^2}\asymp R-r.$$
By monotone convergence $G'(R)$ tends to $G'(R_\mu)$ as $R$ converges to $R_\mu$.
Since we are assuming that $\mu$ is divergent, we get
$$\frac{1}{G'(r)^2}\asymp R_\mu-r,$$
so that
$$G'(r)\asymp (R_\mu-r)^{1/2}.$$
Theorem~\ref{thmweakTauberian}, applied to $a_{n-1}=nR_\mu^np_n(e,e)$, shows that
$$\sum_{k=1}^{n+1}kR_\mu^kp_k(e,e)\asymp n^{1/2}.$$
Classically, when $b_k$ is a non-increasing sequence satisfying that $\sum_{k=1}^nb_k\asymp n^{\beta}$, one can prove that $b_n\asymp n^{\beta-1}$.
We will prove a similar statement below.
Unfortunately, there is no chance to guaranty that $nR_\mu^np_n(e,e)$ is non-increasing, so we cannot deduce yet that $R_\mu^{n}p_n(e,e)\asymp n^{-3/2}$.

However, \cite[Theorem~9.4]{GouezelLalley} shows that there exists $\alpha>0$ such that
\begin{equation}\label{equationdecreasing}
    nR_\mu^np_n(e,e)=nq_n + O\left (\mathrm{e}^{-\alpha n}\right ),
\end{equation}
where $q_n\geq 0$ is non-increasing.
In particular,
$$\sum_{k=1}^{n+1}kR_\mu^kp_k(e,e) = \sum_{k=1}^{n+1}kq_k + O(1)$$
and so
$$\sum_{k=1}^{n+1}kq_k\asymp n^{1/2}.$$

\begin{lemma}
Let $b_n$ be a non-increasing sequence of non-negative numbers and let $0<\beta<1$.
Assume that $\sum_{k=1}^nkb_k\asymp n^{\beta}$.
Then, $b_n\asymp n^{\beta-2}$.
\end{lemma}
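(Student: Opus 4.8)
The plan is to extract pointwise information on $b_n$ from the growth of the partial sums $S_n:=\sum_{k=1}^n k b_k$, using nothing but the monotonicity of $(b_n)$. Fix constants $0<a\le A$ with $a n^\beta\le S_n\le A n^\beta$ for all $n$ large enough. First I would record that $b_n>0$ for every $n$: if $b_N=0$ then $b_k=0$ for all $k\ge N$ since the sequence is non-increasing and non-negative, so $S_n$ would be eventually constant, contradicting $S_n\asymp n^\beta$ with $\beta>0$. Consequently, on any finite range of indices both sides of the claimed equivalence are positive reals, so it suffices to prove $b_n\asymp n^{\beta-2}$ for $n$ large.

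For the upper bound I would keep only the top half of the sum defining $S_n$ and use that $b_k\ge b_n$ for $\lceil n/2\rceil\le k\le n$:
\[
A n^\beta\ \ge\ S_n\ \ge\ \sum_{\lceil n/2\rceil\le k\le n} k b_k\ \ge\ b_n\sum_{\lceil n/2\rceil\le k\le n} k\ \ge\ \frac14\, b_n\, n^2 ,
\]
which gives $b_n\lesssim n^{\beta-2}$.

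The lower bound is the only delicate point, and it is where I expect the main obstacle to lie. A naive telescoping $S_{2n}-S_n$ is useless, since the constants $a,A$ need not satisfy $a2^\beta>A$ and this difference could in principle be negligible. Instead I would fix an integer $M$ with $M>(A/a)^{1/\beta}$, so that $aM^\beta-A>0$; this forces a genuine increment
\[
S_{Mn}-S_n\ \ge\ a(Mn)^\beta-A n^\beta\ =\ (aM^\beta-A)\, n^\beta .
\]
On the other hand, since $b_k\le b_n$ for every $k\ge n$,
\[
S_{Mn}-S_n\ =\ \sum_{k=n+1}^{Mn} k b_k\ \le\ b_n\sum_{k=n+1}^{Mn} k\ \le\ (Mn)^2\, b_n .
\]
Comparing the two displays yields $b_n\ \ge\ \frac{aM^\beta-A}{M^2}\, n^{\beta-2}$; note that only $\beta>0$ is used here, the hypothesis $\beta<1$ being irrelevant to the argument. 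Together with the upper bound this gives $b_n\asymp n^{\beta-2}$, as claimed.
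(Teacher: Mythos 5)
Your proof is correct and follows essentially the same strategy as the paper's: for the upper bound, bound the sum below by $b_n$ times a sum of integers; for the lower bound, choose a multiplier large enough to force a genuine increment in the partial sums and compare it with $b_n\sum k$. Your phrasing is slightly cleaner (using $Mn$ avoids the ceiling in the paper's $\lceil An\rceil$), and your remark that the hypothesis $\beta<1$ is not actually needed is accurate, but the underlying argument is the same.
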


\begin{proof}
First, we see that
$$\sum_{k=1}^nkb_k\geq b_n \sum_{k=1}^nk.$$
Now, $n^2\lesssim \sum_{k=1}^nk$, so
$$b_n\lesssim n^{-2}\sum_{k=1}^nkb_k\lesssim n^{-2}n^\beta,$$
which proves the first inequality.

We prove the second one.
Let $c$ and $C$ be such that
$$cn^{\beta}\leq \sum_{k=1}^nkb_k\leq C n^{\beta}.$$
We fix a large constant $A$ such that $2C\leq c A^{\beta}$.
Then $2Cn^\beta\leq c (An)^{\beta}$ and so
$$2Cn^{\beta}\leq \sum_{k=1}^{\lceil An\rceil}kb_k\leq \sum_{k=1}^nkb_k + b_n\sum_{n+1}^{\lceil An\rceil}k.$$
Note that $\sum_{n+1}^{\lceil An\rceil}k\lesssim n^2$, hence there exists $C'$ such that
$$2Cn^{\beta}\leq \sum_{k=1}^nkb_k + C'b_n n^2\leq Cn^{\beta} +C'b_n n^2.$$
We thus obtain
$Cn^{\beta}\leq C'b_n n^{2}$, which concludes the proof.
\end{proof}

Applied to our situation, this lemma shows that $q_n\asymp n^{-3/2}$.
We then use again~(\ref{equationdecreasing}) to deduce that $R_\mu^{n}p_n(e,e)\asymp n^{-3/2}$.
This proves Theorem~\ref{maintheorem}. \qed

\bibliographystyle{plain}
\bibliography{LLT_1}

\end{document}